\def\cst{\mathfrak{c}}
 \theoremstyle{plain}
 \newtheorem{theorem}{Theorem}[section]
 \newtheorem{lemma}[theorem]{Lemma}
 \newtheorem{proposition}[theorem]{Proposition}
 \newtheorem{corollary}[theorem]{Corollary}
 \theoremstyle{definition}
 \newtheorem{definition}[theorem]{Definition}
 \newtheorem{example}[theorem]{Example}
 \theoremstyle{remark}
 \newtheorem*{remark}{Remark}
\DeclarePairedDelimiter{\floor}{\lfloor}{\rfloor}
\newcommand{\N}{\mathbb N}
\newcommand{\R}{\mathbb R}
\renewcommand{\epsilon}{\varepsilon}
\newcommand{\cS}{\mathcal{S}}
\newcommand{\cO}{\mathcal{O}}
\newcommand{\cQ}{\mathcal{Q}}
\newcommand{\cL}{\mathcal{L}}
\newcommand{\cLone}{\mathcal{L}^{(1)}}
\renewcommand{\cS}{\mathcal{S}}
\newcommand{\cM}{\mathcal{M}}
\newcommand{\cMone}{\mathcal{M}^{(1)}}
\newcommand*{\tqgen}[2]{\cQ_{{#2}}({\vb{#1}})}
\newcommand*{\hdist}[2]{\hdistance ({#1},{#2})}
\newcommand*{\dist}[2]{\distance ({{#1},{#2}})}
\DeclareMathOperator{\pos}{Pos}
\DeclareMathOperator{\distance}{d}
\DeclareMathOperator{\hdistance}{d_{H}}
\DeclareMathOperator{\jac}{Jac}
\def \sos {\mathrm{SoS}}
\def \mom {\mathrm{MoM}}
\def \ubox {[-1,1]^n}
\setlist[enumerate,1]{label={(\roman*)},ref={\thetheorem (\roman*)}}
\def\RR{\mathbb{R}}
\def\RRg{\RR[\vb X]}
\def\RRT{\RR[T]}
\newcommand{\assign}{:=}
\newcommand{\cdummy}{\cdot}
\begin{document}
\title{On the Effective Putinar's Positivstellensatz and Moment Approximation\thanks{This work has been supported by European Union’s Horizon 2020 research and innovation programme under the Marie Skłodowska-Curie Actions, grant agreement 813211 (POEMA)}}
\author{Lorenzo Baldi\thanks{Corresponding Author: Lorenzo Baldi, Affiliation: Centre Inria
Sophia Antipolis - Méditerranée, E-mail Address: lorenzo.baldi@inria.fr}, Bernard Mourrain
  \\
{Inria at Universit\'e C\^ote d'Azur, Sophia Antipolis, France}
}
\date{\today}

\maketitle

\begin{abstract}
  We analyse the representation of positive polynomials in
  terms of Sums of Squares. We provide a quantitative version of
  Putinar's Positivstellensatz over a compact basic
  semialgebraic set $S$, with a new polynomial bound on the degree of
  the positivity certificates.  This bound involves a Łojasiewicz
  exponent associated to the description of $S$. We show that
  if the gradients of the active constraints are linearly independent on $S$ (Constraint Qualification condition), this Łojasiewicz exponent
  is equal to $1$.  We deduce the first general polynomial bound on the convergence rate of the
  optima in Lasserre's Sum-of-Squares hierarchy to the global optimum of
  a polynomial function on $S$, and the first general bound on the Hausdorff distance between the
  cone of truncated (probability) measures supported on $S$ and the
  cone of truncated pseudo-moment sequences, which are positive on the
  quadratic module of $S$.
\end{abstract}

\section{Introduction}

A fundamental question in Real Algebraic Geometry is how to describe
effectively the set of polynomials which are positive\footnote{We follow the French tradition, and call a function $f$ \emph{positive} on a domain $D$ if $f\ge 0$ on $D$ and \emph{strictly positive} on $D$ if $f>0$ on $D$.} on a given
domain.

Clearly, the set of positive polynomials on $\R^{n}$ contains the \emph{Sums of
Squares} of real polynomials (SoS).
Let $\R[\vb{X}] = \R[X_1,\dots,X_n]$ be the $\R$-algebra of
polynomials in the indeterminates $X_{1},\ldots, X_{n}$ with real
coefficients.
The convex cone of SoS polynomials
$$
\Sigma^2 \assign \Sigma^2 [\vb X] = \big\{ \, f \in \R[\vb{X}] \mid \exists r
\in \N, \ g_i \in \R[\vb{X}] \colon f = g_1^2 + \dots + g_r^2
\,\big\}
$$
is a subset of the convex cone of positive polynomials
$\pos(\R^{n})\assign\{\, p\in \R[\vb X]\mid \forall x \in \R^{n} ,
p(x)\ge 0 \,\}$.  But it is known since Hilbert \cite{hilbert_ueber_1888}, that these cones
differ: not all positive polynomials are SoS. A famous counter-example is Motzkin
polynomial \cite{motzkin_t_s_arithmetic-geometric_1967} of degree $6$ in $2$ variables, which is
positive on $\R^{2}$ but not a SoS. Such polynomials exists for
any dimension $n\ge 2$, but not in dimension $1$ since univariate
positive polynomials are SoS.

For a domain $S=\cS(\vb g) = \cS(g_1,\dots,g_r) = \{ \, x \in \R^n
\mid \ g_i(x) \ge 0 \ \textup{ for } i=1, \ldots,r \, \}$, defined by
inequalities $g_{i}\ge 0$ with $g_{i}\in \R[\vb X]$, that is, a \emph{basic closed semialgebraic set},
the set $\pos(S)$ of positive polynomials on $S$ contains
the \emph{quadratic module} generated by the tuple of polynomials $\vb g= (g_{1}, \ldots,
g_{r})$, and defined by
$$
Q = \cQ(\vb g) \assign \Sigma^2+\Sigma^2\cdot g_1+\dots+\Sigma^2 \cdot g_r
$$
and also the \emph{preordering} $O=\cO(g_1,\dots,g_r) \coloneqq \cQ (\prod_{j \in J} g_j \colon  J \subset \{ 1,\dots , r \})$.

A complete description of positive polynomials on $S$ is given by
the Krivine–Stengle Positivstellensatz:
\begin{theorem}[{\cite{krivine_anneaux_1964},\cite{stengle_nullstellensatz_1974}}] Let $\vb g$ be a tuple of polynomials and $S=S(\vb g)$. Then:
$$
\pos(S) =\{ p \in \R[\vb X] \mid \exists s\in \N, \ q_{1}, q_{2} \in
\cO(\vb g) \text{ s.t. }
q_{1} p = p^{2s} + q_{2}\}
$$
\end{theorem}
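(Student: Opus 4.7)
The plan is to prove the two inclusions separately. The inclusion $\supseteq$ is a routine verification by evaluation; the inclusion $\subseteq$ is the substantive content and requires the abstract Positivstellensatz machinery.

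For $\supseteq$, fix any $x \in S$ and an identity $q_1 p = p^{2s} + q_2$ with $q_1, q_2 \in \cO(\vb g)$. Because each $g_i(x) \ge 0$, every squarefree monomial $\prod_{j \in J} g_j(x) \ge 0$, and since sums of squares evaluate non-negatively, so does every element of $\cO(\vb g)$; in particular $q_1(x), q_2(x) \ge 0$. Combined with the identity this yields $q_1(x) p(x) \ge 0$. A case split then closes the argument: if $q_1(x) > 0$, divide to conclude $p(x) \ge 0$; if $q_1(x) = 0$, the identity forces $p(x)^{2s} = -q_2(x) \le 0$, hence $p(x) = 0$. Either way $p \in \pos(S)$.

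For $\subseteq$, I would recast ``$p \ge 0$ on $S$'' as the infeasibility over $\R^n$ of the semialgebraic system $g_1 \ge 0, \ldots, g_r \ge 0, -p > 0$, and then invoke Stengle's abstract Positivstellensatz. Unpacking the certificate of infeasibility yields an algebraic identity of the shape $\sigma + \tau \cdot (-p)^{2s-1} = -(-p)^{2s}$ for some $\sigma, \tau \in \cO(\vb g)$ and $s \in \N$; grouping the terms by parity of the exponent of $p$ produces $q_1, q_2 \in \cO(\vb g)$ with $q_1 p = p^{2s} + q_2$. Alternatively, the intermediate step can be bypassed by quoting directly the ``$\ge 0$'' form of the Positivstellensatz from \cite{stengle_nullstellensatz_1974}, which outputs exactly the claimed identity.

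The main obstacle is the abstract Positivstellensatz itself. Its standard proof augments $\cO(\vb g)$ with the forbidden element, then uses Zorn's lemma to extend the resulting preordering to a maximal proper preordering --- equivalently, a prime cone --- on $\R[\vb X]$; the Artin--Lang homomorphism theorem (equivalently, the Tarski--Seidenberg transfer principle) then realizes the associated ordering on the residue field by an actual point of $\R^n$, contradicting the assumed infeasibility. Since this is classical and the paper's contribution is an effective quantitative refinement rather than the qualitative original, my plan is to cite \cite{krivine_anneaux_1964, stengle_nullstellensatz_1974} rather than redevelop the real-algebraic machinery.
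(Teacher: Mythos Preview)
The paper does not supply its own proof of this theorem: it is quoted as a classical result, attributed to Krivine and Stengle via the bracketed citations, and immediately followed by a remark linking it to Artin's theorem. There is therefore nothing to compare against beyond the bare citation.

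Your proposal is correct and is the standard argument. The reverse inclusion by evaluation is fine (including the edge case $q_1(x)=0$, where $p(x)^{2s}=-q_2(x)\le 0$ forces $p(x)=0$; note that when $s=0$ this case simply cannot occur). For the forward inclusion, your reduction to the infeasibility of $\{g_i\ge 0,\ -p>0\}$ and appeal to the abstract Positivstellensatz is exactly the classical route; the cleanest way to extract the identity is to observe that $\cO(g_1,\dots,g_r,-p)=\cO(\vb g)-p\,\cO(\vb g)$, so that $-p^{2m}\in\cO(\vb g)-p\,\cO(\vb g)$ rearranges directly to $q_1 p = p^{2m}+q_2$. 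Since the paper's own contribution is the \emph{effective} Putinar theorem and it explicitly treats Krivine--Stengle as background, your decision to cite \cite{krivine_anneaux_1964,stengle_nullstellensatz_1974} rather than redevelop the real spectrum / Artin--Lang machinery matches the paper's treatment exactly.
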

This result is an extension of Artin's theorem
\cite{artin_uber_1927}, stating that globally positive polynomials are
ratio of two SoS polynomials. But it induces a \emph{denominator} in the representation of a
positive polynomial.

Since for a general tuple $\vb g$ and $n>1$, positive polynomials on $S(\vb
g)$ do not all belong to the quadratic module $Q(\vb g)$ or even to the
preordering $\cO(\vb g)$, it is natural to ask whether the convex
cone $Q(\vb g)$ (resp. $\cO(\vb g)$) is a good inner-approximation of $\pos(S)$.
A partial answer is given by two important results due to Schmüdgen
and Putinar, and also known as denominator free
Positivstellensatz. They require the following assumption:
\begin{definition}
  \label{def::archimedean}
  Denote $\norm{\vb X}_2^2 = X_1^2 + \dots + X_n^2$. We say that a quadratic module $Q$ is \emph{Archimedean} if there exists $r \in \R$ such that $r^2-\norm{\vb X}_2^2 \in Q$.
\end{definition}
We recall these two results, which are central in the paper:
\begin{theorem}[{Schmüdgen's Positivstellensatz \cite{schmudgen_thek-moment_1991}}]
    \label{thm::schmudgen}
    Let $\cS(\vb g)$ be a compact basic semialgebraic set. Then $f>0$ on $\cS(\vb g)$ implies $f\in \cO(\vb g)$.
\end{theorem}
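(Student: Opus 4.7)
The plan is to reduce the statement to the abstract Kadison--Dubois (or Jacobi) representation theorem for Archimedean preorderings in a commutative $\R$-algebra: if $T \subset A$ is an Archimedean preordering and $f \in A$ satisfies $\varphi(f) > 0$ for every $\R$-algebra homomorphism $\varphi : A \to \R$ with $\varphi(T) \subset \R_{\ge 0}$, then $f \in T$. The task is then to verify the two hypotheses for $A = \R[\vb X]$, $T = \cO(\vb g)$, and our given $f$.

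First I would establish that $\cO(\vb g)$ is Archimedean, which is where compactness of $S = \cS(\vb g)$ enters. Pick $N \in \N$ with $\norm{x}_2^2 < N$ for every $x \in S$, so that $N - \norm{\vb X}_2^2 > 0$ on $S$. The Krivine--Stengle Positivstellensatz applied to this polynomial yields an identity
\[
q_1 (N - \norm{\vb X}_2^2) = (N - \norm{\vb X}_2^2)^{2s} + q_2
\]
with $q_1, q_2 \in \cO(\vb g)$ and $s \in \N$. The technical heart of the proof is to convert this denominator-bearing identity into a denominator-free one, i.e., to produce $N' \ge N$ such that $N' - \norm{\vb X}_2^2 \in \cO(\vb g)$. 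Classically this is done by an inductive algebraic manipulation exploiting the closure of $\cO(\vb g)$ under products of the $g_j$ together with the auxiliary strict positivity of $q_1$ on $S$, which allows one to iterate Stengle's theorem and absorb the denominator.

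Next, I would identify the characters of $(\R[\vb X], \cO(\vb g))$: every $\R$-algebra homomorphism $\varphi : \R[\vb X] \to \R$ is evaluation at some point $x \in \R^n$ (namely $\varphi(p) = p(x)$), and the condition $\varphi(\cO(\vb g)) \subset \R_{\ge 0}$ is equivalent to $g_i(x) \ge 0$ for all $i$, i.e., $x \in S$. Hence the hypothesis $f > 0$ on $S$ translates exactly into $\varphi(f) > 0$ for every $T$-positive character $\varphi$, and Kadison--Dubois yields $f \in \cO(\vb g)$.

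The main obstacle I expect is the first step. The delicate part of Schmüdgen's original argument lies precisely in producing a denominator-free Archimedean certificate for some $N' - \norm{\vb X}_2^2$ purely from Krivine--Stengle, and it is this passage that cannot really be streamlined without substantial input beyond the denominator version of the Positivstellensatz. Once Archimedeanity is in hand, the identification of characters and the invocation of the representation theorem are purely formal.
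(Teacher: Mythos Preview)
The paper does not prove this theorem: it is stated as a classical background result with a citation to Schm\"udgen's original paper, and is used later only as input (e.g.\ to observe that compactness of $\cS(\vb g)$ implies $\cO(\vb g)$ is Archimedean). There is therefore no ``paper's own proof'' to compare against.

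Your outline is the standard algebraic route (essentially W\"ormann's simplification of Schm\"udgen's original operator-theoretic argument): reduce to the abstract Kadison--Dubois/Jacobi representation theorem, after first showing that $\cO(\vb g)$ is Archimedean. The identification of characters with points of $S$ and the final invocation of the representation theorem are indeed routine once Archimedeanity is in hand.

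The genuine gap is the one you yourself flag: you do not carry out the passage from the Krivine--Stengle identity $q_1(N-\norm{\vb X}_2^2) = (N-\norm{\vb X}_2^2)^{2s} + q_2$ to a denominator-free certificate $N' - \norm{\vb X}_2^2 \in \cO(\vb g)$. The usual way to close this is not by ``iterating Stengle's theorem'' on $q_1$, but rather by working with the subring $H_T = \{\, a \in \R[\vb X] \mid \exists N,\ N \pm a \in T \,\}$ of $T$-bounded elements for $T = \cO(\vb g)$: one checks $H_T$ is a ring, and then uses the Krivine--Stengle identity together with an elementary algebraic lemma (if $a^{2m} \in T + T\cdot a$ then $a \in H_T$, roughly) to conclude each $X_i \in H_T$, hence $H_T = \R[\vb X]$. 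Without this step, the proposal remains a correct plan but not a proof.
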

\begin{theorem}[{Putinar's Positivstellensatz \cite{putinar_positive_1993}}]
    \label{thm::putinar}
    Let $\cS(\vb g)$ be a basic closed semialgebraic set. If $\cQ(\vb g)$ is Archimedean, then $f>0$ on $\cS(\vb g)$ implies $f\in \cQ(\vb g)$.
\end{theorem}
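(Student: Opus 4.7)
The plan is to argue by contradiction: assume $f > 0$ on $S$ but $f \notin Q = \cQ(\vb g)$, and derive a contradiction by producing a probability measure supported on $S$ against which $f$ integrates non-positively. Since the Archimedean hypothesis gives $r^{2} - \norm{\vb X}_{2}^{2} \in Q$, a short induction on degree shows that for every $p \in \R[\vb X]$ there is $N \in \N$ with $N \pm p \in Q$; in particular $Q - Q = \R[\vb X]$, so $Q$ has nonempty algebraic interior. A Hahn--Banach--type separation argument (Eidelheit) then produces a nonzero linear functional $L : \R[\vb X] \to \R$ with $L(Q) \subseteq \R_{\ge 0}$ and $L(f) \le 0$. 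The positive semidefinite bilinear form $\val{p}{q} \assign L(pq)$ (positivity coming from $L(\Sigma^{2}) \ge 0$) satisfies a Cauchy--Schwarz inequality which, combined with the Archimedean estimate, forces $L(1) > 0$, so we may normalize $L(1) = 1$.

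The next step is to realize $L$ as integration against a measure via a Gelfand--Naimark--Segal (GNS) construction. Quotienting $\R[\vb X]$ by the kernel $\{p : L(p^{2}) = 0\}$ of the form $\val{\cdot}{\cdot}$ and completing gives a Hilbert space $H$ with cyclic vector $\xi$, the class of $1$. Multiplication by $X_{i}$ descends to commuting symmetric operators $T_{i}$ on a dense subspace. The crucial input is the Archimedean inequality $L\bigl((r^{2} - \norm{\vb X}_{2}^{2}) q^{2}\bigr) \ge 0$, which reads $\sum_{i} \norm{T_{i} v}^{2} \le r^{2} \norm{v}^{2}$ on the dense subspace; hence each $T_{i}$ is bounded and extends uniquely to a bounded self-adjoint operator on $H$, and the commutativity is inherited in the strong (resolvent) sense.

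The multivariate spectral theorem then yields a joint projection-valued measure $E$ on $\R^{n}$ supported in the closed ball of radius $r$. Setting $\mu(A) \assign \val{E(A)\xi}{\xi}$ gives a probability measure on $\R^{n}$ with $L(p) = \int p \, d\mu$ for every $p \in \R[\vb X]$. For each constraint, the hypothesis $L(g_{i} q^{2}) \ge 0$ for all $q \in \R[\vb X]$ translates via the functional calculus into $\int g_{i} q^{2} \, d\mu \ge 0$ for a dense family of test functions, forcing $g_{i} \ge 0$ $\mu$--almost everywhere and hence $\supp(\mu) \subseteq \cS(\vb g) = S$. Since $S$ is compact (again by the Archimedean condition) and $f > 0$ on $S$, we have $\min_{S} f > 0$, whence $L(f) = \int_{S} f \, d\mu \ge \min_{S} f > 0$, contradicting $L(f) \le 0$.

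The main obstacle is the functional-analytic step: verifying that the symmetric multiplication operators $T_{i}$ extend to bounded self-adjoint operators (this is precisely where Archimedeanness enters), that they strongly commute, and that the joint spectral measure interacts correctly with the defining inequalities $g_{i} \ge 0$. The separation argument and the final integration step are standard once $L$ and $\mu$ are in hand; the spectral-theoretic construction of $\mu$ is the technical heart of the proof, and is where Putinar's argument diverges from Schm\"udgen's compactness-based approach to \Cref{thm::schmudgen}.
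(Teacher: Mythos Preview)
The paper does not supply its own proof of \Cref{thm::putinar}; it is quoted from \cite{putinar_positive_1993} as background. Your argument is correct and is essentially the classical functional--analytic proof: Hahn--Banach/Eidelheit separation to produce a positive functional $L$ with $L(f)\le 0$, the Archimedean bound to force $L(1)>0$ and boundedness of the GNS multiplication operators, and the joint spectral theorem to realise $L$ as integration against a probability measure supported on $S$. The steps you flag as delicate (boundedness and strong commutativity of the $T_i$, and that $\supp\mu\subset S$) are exactly the places where care is needed, and your outline handles them in the standard way.

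It is worth noting that, although the paper does not prove \Cref{thm::putinar} directly, its main result \Cref{thm::polynomial_putinar} yields an independent, \emph{constructive} proof as a by-product: after the harmless normalisation \eqref{assum::norm}, one perturbs $f$ by $s\sum_i h_{k,m}(g_i)g_i$ with the polynomial echelon function $h_{k,m}$ so that the perturbed polynomial is strictly positive on $\ubox$, applies the effective Schm\"udgen theorem on the box (\Cref{ass::putinar_on_B}) together with \Cref{lem::box_to_ball}, and checks that the perturbation itself lies in $\cQ(\vb g)$ via \Cref{lem::inclusion_quad_module}. That route avoids measures and spectral theory entirely and moreover gives an explicit degree bound for the representation; your spectral/GNS route is non-effective but conceptually shorter and is the one originally used by Putinar. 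One small quibble: your closing remark that Schm\"udgen's proof of \Cref{thm::schmudgen} is ``compactness-based'' as opposed to spectral is a bit misleading---Schm\"udgen's original argument also goes through spectral theory; the genuine distinction is that Putinar needs the Archimedean hypothesis on $\cQ(\vb g)$ to obtain boundedness of the $T_i$, whereas Schm\"udgen extracts it from compactness of $S$ via the preordering.
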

As a consequence of the first result, notice that $\cS(\vb
g)$ compact implies that $\cO(\vb g)$ is Archimedean. On the other hand there are examples with $\cS(\vb
g)$ compact but $\cQ(\vb g)$ not Archimedean (see e.g.
\cite[ex.~6.3.1]{prestel_positive_2001}).

Since a positive polynomial $f\in \pos(S)$ on a compact basic
semialgebraic set $S$ can be approximated uniformly on $S$ by the
polynomial $f+\epsilon$, which is strictly positive on $S$ for
$\epsilon>0$, these results show that any $f = \lim_{\epsilon \to 0} f+\epsilon$ positive on $S$ is
the limit of polynomials in $Q(\vb g)$ (resp.
$\cO(\vb g)$).  Unfortunately, the degree of the representation of $f+\epsilon$ in $\cQ(\vb g)$ goes to infinity as $\epsilon \to 0$, see \cite{10.1006/jcom.1996.0011}.

In this paper, we provide quantitative versions of \Cref{thm::putinar}.
We give new bounds on the degree of the
representation in $Q(\vb g)$, which control the
quality of approximation of positive polynomials by elements in  $Q(\vb g)$. For this problem, also known as \emph{Effective
Putinar Positivstellensatz}, our main result is
\Cref{thm::polynomial_putinar}, which provides the first polynomial
bounds in the intrinsic parameters associated to $\vb g$ and $f$.

The proof of \Cref{thm::polynomial_putinar} is developed in \Cref{sec::ingredients} and \Cref{sec::polynomial_putinar}. In the proof and in the bound of the theorem a special role is played by the {\L}ojasiewicz exponent ${\textit{\L}}$, comparing the distance and the algebraic distance from $S$, see \Cref{def::loja}.
In \Cref{thm::L_equal_1} we prove that ${\textit{\L}} = 1$ in regular cases, i.e. when a regularity condition coming from Optimization is satisfied, see \Cref{def::CQC}. To our best knowledge this is the first analysis of the {\L}ojasiewicz exponent under regularity assumptions of any kind. Corollaries to our main results in regular cases with $\textit{\L} = 1$ are described in \Cref{cor::polynomial_putinar_regular}, \Cref{cor::polynomial_lasserre}
and \Cref{cor::moment_rate_convergence_prob}.

Polynomials whose representation in the quadratic module
$Q(\vb g)$ is of degree bounded by $2\ell$, $\ell\in \N$, are used to define a hierarchy of convex optimization problems, also known as
Lasserre's hierarchy, whose optimum value converges to the \emph{global
optimum} of a polynomial $f\in \R[\vb X]$ on $S$ under the Archimedean assumption \cite{lasserre_global_2001}.
We describe these hierarchies in \Cref{sec::lasserre_hierarchy}. In \Cref{thm::polynomial_lasserre_sos} and
\Cref{thm::polynomial_lasserre}, we deduce from \Cref{thm::polynomial_putinar} new polynomial bounds on
the convergence rate of this hierarchy to the global optimum, in terms
of the order $\ell$ of the hierarchy.

Considering the dual problem, we also analyse the quality of
approximation of measures by truncated pseudo-moment sequences used in Lasserre moment hierarchy.
In  \Cref{thm::moment_rate_convergence_prob}, we provide new bounds on
the Hausdorff distance between the cone of
 truncated probability measures (supported on $S$) and the
outer convex set of truncated positive pseudo-moment sequences of unit mass,
and on the rate of convergence when the order $\ell$ goes to
infinity. The proof of \Cref{thm::moment_rate_convergence_prob} is developed in \Cref{sec::convergence_of_moments}. The bounds involve intrinsic parameters associated to $\vb
g$ and the degree $t$ of truncation. As an intermediate step, in
\Cref{thm::moment_rate_convergence} we also bound the Hausdorff
distance between truncated positive pseudo-moment sequences and non-normalized measures.

\subsection{Truncated quadratic modules and positive polynomials}





To analyse the degree in these SoS representations, we introduce
the \emph{truncated} quadratic modules at degree (or \emph{level}) $\ell\in \N$, i.e. the polynomials in $\cQ(\vb g)$ that are generated in degree $\le \ell$:
\begin{equation}
  \label{eq::truncated_quad_module}
  \tqgen{g}{\ell} = \big\{ \, s_0 + \sum_{i=1}^r s_i g_i \mid s_j \in \Sigma^2, \ \deg s_0 \le \ell, \ \deg s_i g_i \le \ell \ \forall i =1, \dots r \,\big\} \subset \cQ(\vb g)\cap \RRg_\ell.
\end{equation}
where $\R[\vb{X}]_\ell$ is the vector space of
polynomials of degree $\le \ell$.

Effective versions of Schmüdgen and Putinar's Positivstellensatz, that give degree bounds for the representation in truncaded preorderings and quadratic modules, have been proved by Schweighofer and Nie.
\begin{theorem}[{\cite{schweighofer_complexity_2004}}]
  For all $\vb g = g_1,\dots g_r \subset \RRg = \R[X_1, \dots, X_n]$ defining $\emptyset \neq \cS(\vb g) = S \subset (-1,1)^n$ there exists $0<c \in \R$ (depending on $\vb g$ and $n$) such that, if $f \in \RRg_d$ is strictly positive on $S$ with minimum $f^* = \min_{x\in S} f(x) > 0$,
  we have $f \in \cO_\ell(\vb g)$ if \[ \ell \ge c d^2\big(1+ \big(d^2 n^d \frac{\norm{f}_{\vb X}}{f^*}\big)^c\big).\]
\end{theorem}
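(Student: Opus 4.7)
The plan is to derive this effective Schmüdgen-type Positivstellensatz by combining Schmüdgen's combinatorial lifting argument with a quantitative form of Pólya's theorem (due to Powers and Reznick). Since the statement uses the full preordering $\cO(\vb g)$ rather than the quadratic module $\cQ(\vb g)$, I would follow Schmüdgen's strategy, in which the inclusion of all subproducts $\prod_{i\in J} g_i$ is essential and is morally responsible for the $n^d$-type factor in the final bound.

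First, since $S \subset (-1,1)^n$, I would augment $\vb g$ with the box constraints $1 - X_i^2 \ge 0$; this inclusion does not change $\cO(\vb g)$ but provides a geometric frame on which to apply Pólya's theorem after an affine change of variables sending the box to the standard simplex. Then I would construct a polynomial $F$ of degree $O(d)$, with norm controlled in terms of $\norm{f}_{\vb X}$, that is strictly positive on the entire box and satisfies $F - f \in \cO_{O(d)}(\vb g)$. The construction of $F$ is the classical Schmüdgen trick: add to $f$ a positive combination of products $\prod_{i\in J} g_i$, with $J \subset \{1,\dots,r\}$, chosen to compensate $f$ in the regions of the box where some $g_i$ become negative. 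One must estimate carefully the minimum of $F$ on the box, which should be at least a constant multiple of $f^*$, the multiplicative constant depending only on $\vb g$.

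The final step is the application of a quantitative Pólya theorem: a polynomial $p$ of degree $d_p$ strictly positive on the simplex admits a representation $(\sum_j Y_j)^N p = \sum_\alpha c_\alpha Y^\alpha$ with nonnegative $c_\alpha$ for some $N$ bounded polynomially in $d_p \cdot \norm{p}/p^*$; expanding back via $Y_j = g_j(X)$ yields the sought representation of $f$ in $\cO_\ell(\vb g)$. The main obstacle is the lifting step: one must track simultaneously the degree, the norm and the minimum of $F$ through a combinatorial construction indexed by all subsets of the constraints, and it is precisely this bookkeeping, together with the $O(n^d)$ monomials needed to bound $\norm{F}$ in terms of $\norm{f}_{\vb X}$, that fixes the final shape $\ell \ge c d^2\bigl(1 + (d^2 n^d \norm{f}_{\vb X}/f^*)^c\bigr)$ of the bound. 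The constant $c$ absorbs the geometry of $\vb g$, which will later be made explicit in the main theorem of the paper via the Łojasiewicz exponent.
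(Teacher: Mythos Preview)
The paper does \emph{not} prove this theorem: it is quoted from \cite{schweighofer_complexity_2004} as background, with no argument supplied. There is therefore nothing in the paper to compare your sketch against. What you have written is a fair high-level summary of Schweighofer's own method (perturb $f$ into a polynomial positive on the whole box using products of the $g_i$, then apply the quantitative P\'olya theorem of Powers--Reznick), so in that sense your proposal is on the right track relative to the original source.

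Two points in your sketch need correction. First, the assertion that augmenting $\vb g$ with the box constraints $1-X_i^2$ ``does not change $\cO(\vb g)$'' is not free: it amounts to $1-X_i^2 \in \cO(\vb g)$, which is exactly (non-effective) Schm\"udgen applied to the compact set $S$, and for an \emph{effective} statement you cannot simply invoke it without a degree bound. Schweighofer circumvents this, and you should too, rather than assert it as obvious. Second, your closing remark that ``the constant $c$ absorbs the geometry of $\vb g$, which will later be made explicit in the main theorem of the paper via the \L ojasiewicz exponent'' conflates two different theorems: the \L ojasiewicz exponent $\textit{\L}$ enters the paper's analysis of Putinar's Positivstellensatz (\Cref{thm::polynomial_putinar}), not the cited Schm\"udgen bound; in the Schm\"udgen setting the constant $c$ remains inexplicit in \cite{schweighofer_complexity_2004} and is not tied to $\textit{\L}$ anywhere in this paper.
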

\begin{theorem}[{\cite{nie_complexity_2007}}]
  \label{degree_bound}
  For all $\vb g = g_1,\dots g_r \subset \RRg = \R[X_1, \dots, X_n]$ defining an Archimedean quadratic module $Q = \cQ(\vb g)$ and $\emptyset \neq \cS(\vb g) = S \subset (-1,1)^n$, there exists $0<c \in \R$ (depending on $\vb g$ and $n$) such that, if $f \in \RRg_d$ is strictly positive on $S$ with minimum $f^* = \min_{x\in S} f(x) > 0$,
  we have $f \in \tqgen{g}{\ell}$ if \[ \ell \ge c \exp(\big(d^2 n^d \frac{\norm{f}_{\vb X}}{f^*}\big)^c).\]
\end{theorem}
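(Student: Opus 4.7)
The plan is to derive this from Schweighofer's effective Schmüdgen theorem (the preceding statement in the excerpt) by a quantitative conversion of a preordering representation into a quadratic module representation, paying an exponential price to reduce products of the $g_i$'s to sums of single $g_i$'s. First, I would apply Schweighofer's bound to $f$ directly on $S$ to obtain an integer $L$, polynomial in $d$, $n^d$ and $\|f\|_{\vb X}/f^*$, and SoS polynomials $\sigma_J$ with $\deg(\sigma_J \prod_{j \in J} g_j) \le L$, such that
\[ f = \sum_{J \subset \{1, \ldots, r\}} \sigma_J \prod_{j \in J} g_j . \]
It remains to re-express each product $g^J = \prod_{j \in J} g_j$ with $|J| \ge 2$ as an element of $\cQ(\vb g)$ at a controlled degree.

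Second, I would extract from the Archimedean hypothesis a uniform boundedness principle. Fix $N$ with $N - \|\vb X\|_2^2 \in \cQ(\vb g)$; iterating the identity $N^k - X_i^{2k} = (N - X_i^2)(N^{k-1} + N^{k-2} X_i^2 + \cdots + X_i^{2k-2})$ yields $N^k - X_i^{2k} \in \cQ(\vb g)$ at a degree linear in $k$, and hence $C_p - p^2 \in \cQ(\vb g)$ for any polynomial $p$, with $C_p$ explicit in $\|p\|$, $\deg p$, $n$ and $N$, and the degree of the certificate controlled in the same parameters.

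Third, I would prove the conversion lemma $g^J \in \cQ(\vb g)$ by induction on $|J|$. The base case is trivial. For the inductive step, write $g^J = g_k \cdot g^{J \setminus \{k\}}$, apply the inductive hypothesis $g^{J \setminus \{k\}} = s_0 + \sum_i s_i g_i$, and handle the cross-terms $s_i g_i g_k$ via the polarization identity $2 g_i g_k = (g_i + g_k)^2 - g_i^2 - g_k^2$ together with the bounds of the previous step, which absorb the negative squares $-g_i^2, -g_k^2$ modulo $\cQ(\vb g)$. Degrees grow exponentially in $|J|$, and hence in $r$. Combining this with the Schweighofer estimate from Step 1 produces an $\ell$ of the stated exponential form.

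The main obstacle is the careful bookkeeping of all degree bounds through this recursive conversion: each step of the induction multiplies previous degrees by a factor depending on $\deg \vb g$, $n$ and $N$, and one must track how these constants propagate so as to match the explicit shape $c \exp\bigl((d^2 n^d \|f\|_{\vb X}/f^*)^c\bigr)$. The Archimedean constant $N$ and the SoS certificate of $N - \|\vb X\|_2^2$ are only given qualitatively; their contribution is absorbed into the constant $c$ that the theorem allows to depend on $\vb g$ and $n$.
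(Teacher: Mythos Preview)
There is a genuine gap in your argument. The polarization identity $2 g_i g_k = (g_i+g_k)^2 - g_i^2 - g_k^2$, combined with the Archimedean bound $C_i - g_i^2 \in \cQ(\vb g)$, does \emph{not} yield $g_i g_k \in \cQ(\vb g)$: it only yields
\[
2 g_i g_k + C_i + C_k = (g_i+g_k)^2 + (C_i - g_i^2) + (C_k - g_k^2) \in \cQ(\vb g),
\]
i.e.\ $g_i g_k$ plus a positive constant lies in the quadratic module. The negative squares are ``absorbed'' only at the cost of this additive constant. Propagating through your induction, you obtain $g^J + C_J \in \cQ(\vb g)$ for some $C_J>0$, and hence
\[
f + \sum_{|J|\ge 2} C_J\,\sigma_J \in \cQ(\vb g),
\]
which is very far from $f \in \cQ(\vb g)$: the correction $\sum_J C_J \sigma_J$ is a sum of squares whose size is governed by the $\sigma_J$ coming out of Schweighofer's theorem, not by $f^*$, and there is no mechanism to cancel it. In fact your conversion lemma ``$g^J \in \cQ(\vb g)$ for all $J$'' would imply $\cO(\vb g)\subset \cQ(\vb g)$ and hence $\cO(\vb g)=\cQ(\vb g)$; this equality is known to fail for suitable Archimedean quadratic modules, so the lemma is false in general and no amount of bookkeeping will rescue it.

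For context, the paper does not prove this theorem at all: it is quoted from \cite{nie_complexity_2007} as prior art. The method actually used there (and refined in the present paper for \Cref{thm::polynomial_putinar}) is entirely different from a preordering-to-module conversion. One constructs a perturbation $p = f - s\sum_i h(g_i)\,g_i$ with $h$ a suitable univariate SoS (here, a polynomial echelon function), chosen so that $p>0$ on the whole box $[-1,1]^n$; then $p$ is handled by an effective Positivstellensatz on the box, while $f-p = s\sum_i h(g_i) g_i$ lies in $\cQ(\vb g)$ by construction. The exponential in Nie--Schweighofer arises from the degree growth of the univariate SoS $h$, not from any recursive reduction of products $g^J$.
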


The norm $\norm{\cdummy}_{\vb X}$ used in \cite{schweighofer_complexity_2004} and \cite{nie_complexity_2007} is the max norm of the coefficients of the polynomial $f$ w.r.t. the weighted monomial basis $\{ \frac{\abs{\alpha}!}{\alpha_1 ! \dots \alpha_n !} \vb X ^{\alpha} \colon \abs{\alpha} \le d\}$, while the one we will use is the max norm on $\ubox$. We describe this norm and fix some notation.

\noindent{}\textbf{Notation.}
Throughout the article:
\begin{itemize}
  \item $f \in \RRg$ is a polynomial in $n$ variables of degree $d=d(f)$;
  \item $S = \cS(\vb g) = \cS(g_1, \dots, g_r)$ is the basic closed semialgebraic set defined by $\vb g = g_1,\dots,g_r$;
  \item $d(\vb g) = \max_i \deg g_i$ is the maximum degree of the inequalities defining $S$;
  \item $f^* = \min_{x \in S} f(x)$ is the minimum of $f$ on $S$, and unless otherwise stated $f^* > 0 $;
  \item $\norm{\cdummy}$ denotes the max norm of a polynomial on $\ubox$, i.e. $\norm{h} = \max_{x \in \ubox} \abs{h(x)}$;
  \item $\epsilon (f) = \frac{f^*}{\norm{f}}$ is a measure of how close is $f$ to have a zero on $S$.
\end{itemize}
For convenience we will prove our theorem in a normalized setting.

\noindent{}\textbf{Normalisation assumptions.}
In the following, we assume that
\begin{equation}
\label{assum::norm}
\begin{array}{ll}
\bullet & 1-\norm{\vb X}_2^2 \in \cQ(\vb g), \hspace{10cm}\ \\
\bullet & \norm{g_i} \le \frac{1}{2} \ \forall i \in \{1, \dots, r\}.
\end{array}
\end{equation}
We can always be in this setting by a change of variables if we start with an Archimedean quadratic module: if $r^2 - \norm{\vb X}^2 \in \cQ(\vb g)$ then $1-\norm{\vb X}^2 \in  \cQ(\vb g(r \vb X))$ (i.e. the quadratic module generated by $g_i(r X_1, \dots, r X_n)$).
By replacing $g_i$ with $\frac{g_i}{2\norm{g_i}}$, we can also assume without loss of
generality that the second point is satisfied.

The main result of the paper is the following theorem. It is an effective, general version of Putinar's Positivstellensatz with polynomial bounds for fixed $n$.
\begin{theorem}
  \label{thm::polynomial_putinar}
Assume $n \ge 2$ and let $g_{1},\ldots, g_{r}\in \R[\vb X] = \R[X_1,\dots,X_n]$ satisfying  the normalization
assumptions \eqref{assum::norm}. Let $f\in \R[\vb X]$ such that
 $f^* = \min_{x \in S} f(x) > 0$. Let $\cst$, $\textit{\L}$ be the Łojasiewicz
 coefficient and exponent given by \Cref{def::loja}. Then $f \in \cQ_\ell(\vb g)$ if
  \begin{align*}
  \ell & \ge  O(
  n^{3}
  2^{5n{\textit{\L}}}
  r^{n}
  \cst^{2n}
  d(\vb g)^{n}
  d(f)^{3.5n{\textit{\L}}}
  \epsilon(f)^{-{2.5 n {\textit{\L}}}})\\
  & = \gamma(n, \vb g)\, d(f)^{3.5n{\textit{\L}}}
  \epsilon(f)^{-{2.5 n {\textit{\L}}}},
  \end{align*}
  where $\gamma(n,\vb g) \ge 1$ depends only on $n$ and $\vb g$.
\end{theorem}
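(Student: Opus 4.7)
The plan is to reduce the representation problem on the semialgebraic set $S$ to an effective Positivstellensatz on the cube $\ubox$, and then to bridge the gap using the Łojasiewicz inequality of \Cref{def::loja}. By the normalisation $1 - \norm{\vb X}_2^2 \in \cQ(\vb g)$, every box generator lies in $\cQ(\vb g)$ via
\[
1 - X_j^2 = (1 - \norm{\vb X}_2^2) + \sum_{i \ne j} X_i^2 \in \cQ(\vb g),
\]
so a representation in the truncated box quadratic module translates into a representation in $\cQ_\ell(\vb g)$ with only controlled degree overhead. The core step is therefore to construct SoS multipliers $\sigma_1, \ldots, \sigma_r \in \Sigma^2$ of polynomially bounded degree such that
\[
h \assign f - \sum_{i=1}^r \sigma_i g_i
\]
is strictly positive on the \emph{whole} cube $\ubox$, with a lower bound $h^{**} \assign \min_{\ubox} h$ of the same order as $f^*$ and $\norm{h}$ controlled in terms of $\norm{f}$ and the other parameters. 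One can then apply a quantitative Positivstellensatz on $\ubox$ (of Pérez-Aparicio--de~Klerk--Laurent type, to be proven in \Cref{sec::ingredients}) to $h$, obtaining $h \in \cQ_{\ell'}(1-X_1^2, \ldots, 1-X_n^2) \subset \cQ_{\ell}(\vb g)$ with $\ell'$ polynomial in $d(h)$ and $\norm{h}/h^{**}$, and conclude $f = h + \sum_i \sigma_i g_i \in \cQ_\ell(\vb g)$.

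The construction of the $\sigma_i$ will be driven by the Łojasiewicz inequality, which on the cube gives
\[
\dist{x}{S} \le \cst \cdot \max_i \max(0, -g_i(x))^{1/\textit{\L}}.
\]
Combined with Markov's inequality $\norm{\nabla f}_{\infty, \ubox} \le 2 d(f)^2 \norm{f}$, this yields the pointwise bound
\[
f(x) \ge f^* - 2 d(f)^2 \cst \norm{f} \cdot \max_i \max(0, -g_i(x))^{1/\textit{\L}} \qquad \forall x \in \ubox,
\]
so $f(x) \ge f^*/2$ on the whole cube, except where at least one $g_i(x) \le -\tau$, with threshold $\tau = (\epsilon(f)/(4 d(f)^2 \cst))^{\textit{\L}}$. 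The strategy is then to take $\sigma_i = \lambda \cdot \chi_i(g_i)^2$, where $\chi_i$ is a univariate polynomial of degree $O(\tau^{-1/2})$ (Jackson/Chebyshev-type) that approximates the indicator of $\{t \le -\tau\}$ on $[-\tfrac{1}{2}, \tfrac{1}{2}]$, and $\lambda = O(\tau^{-1} \norm{f})$ is a scalar compensating for $|f| \le \norm{f}$ on the bad region. This way $\sigma_i$ is uniformly small on $S$, so $\sigma_i g_i \ge 0$ only mildly decreases the positivity margin of $f$ there, while on $\{g_i \le -\tau\}$ one has $-\sigma_i g_i \ge \lambda \tau \gtrsim \norm{f}$, dominating the possibly negative values of $f$. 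Summing over $i$ and balancing constants then yields $h \ge f^*/4$ on $\ubox$, with both $\deg(\sigma_i g_i)$ and $\norm{h}$ polynomially bounded in $d(\vb g), r, \cst^{\textit{\L}}, d(f)^{\textit{\L}}, \epsilon(f)^{-\textit{\L}}$.

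Feeding $h$ into the effective cube Positivstellensatz and carefully tracking the scaling of $d(h), \norm{h}, h^{**}$ in terms of the original parameters produces the claimed bound $\ell = O(n^3 \, 2^{5n\textit{\L}} \, r^n \, \cst^{2n} \, d(\vb g)^n \, d(f)^{3.5 n \textit{\L}} \, \epsilon(f)^{-2.5 n \textit{\L}})$. The main obstacle is the optimal construction of the multipliers $\sigma_i$: a trivial choice (constants or low-degree fixed SoS) already yields a valid certificate on $\ubox$ but with an exponential loss in $n$ and $1/\epsilon(f)$, and it is only the Chebyshev/Jackson-type polynomial approximation of the indicator of the bad region $\{-g_i > \tau\}$ that keeps the final exponents polynomial in $d(f)$ and $1/\epsilon(f)$ for fixed $n$, at the cost of the Łojasiewicz exponent $\textit{\L}$ appearing multiplicatively. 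A subtler accounting issue is to keep $\norm{\sigma_i}_{\ubox}$ polynomially bounded so that $\norm{h}/h^{**}$ stays polynomial and the prefactor $\cst^{2n}$ in the final bound is not inflated to something exponential in $\cst$.
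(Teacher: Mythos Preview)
Your overall strategy coincides with the paper's: perturb $f$ by subtracting $\sum_i \sigma_i g_i$ so the result is strictly positive on all of $\ubox$, then apply the effective Schm\"udgen Positivstellensatz on the box \cite{laurent_effective_2021} and transfer into $\cQ(\vb g)$ via $1-\norm{\vb X}_2^2\in\cQ(\vb g)$. (One technicality: the box result lands in the \emph{preordering} $\cO(1\pm X_i)$, and the paper passes to $\cQ(\vb g)$ through $\cQ(1-\norm{\vb X}_2^2)$, which is itself a preordering since it has a single generator, see \Cref{lem::box_to_ball}; your route through $\cQ(1-X_1^2,\dots,1-X_n^2)$ does not directly absorb the cross-products.) The genuine gap is the degree estimate $\deg\chi_i=O(\tau^{-1/2})$ combined with the requirement that $\norm{\sigma_i}$ stay polynomially bounded. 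These are incompatible: any polynomial $p$ of degree $m$ with $\norm{p}_{[-\frac12,\frac12]}\le C$, $p\ge 1$ on $[-\tfrac12,-\tau]$ and $p\le\tfrac12$ on $[0,\tfrac12]$ must have $m\ge c\,\tau^{-1}$, since by the Markov/Bernstein inequality $\abs{p'}\lesssim Cm$ near the origin while $p$ drops by at least $\tfrac12$ across the gap of width $\tau$. Chebyshev-type separators of degree $O(\tau^{-1/2}\log\tfrac1\eta)$ do exist, but they grow like $e^{cm}$ on $[-\tfrac12,-\tau]$; this makes $\norm{h}/h^{**}$ exponential in $\tau^{-1/2}$, and after feeding into the box Positivstellensatz the final $\ell$ becomes exponential in $\epsilon(f)^{-\textit{\L}/2}$. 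You flag this as a ``subtler accounting issue,'' but with the stated degree it cannot be repaired.

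The paper's resolution is to drop the SoS form of the multiplier and use instead a \emph{uniformly bounded} positive echelon polynomial $h_{k,m}$: the degree-$m$ Chebyshev approximant of a $C^2$ cubic-spline step, satisfying $0\le h_{k,m}\le 1+\tfrac1k$ on all of $[-1,1]$ with $m=O(\delta^{-1}k^{1/3})$ (\Cref{prop::echelon}). Because $h_{k,m}$ is merely nonnegative rather than a square, membership of $h_{k,m}(g_i)\,g_i$ in $\cQ(\vb g)$ is not automatic; the paper obtains it via the Fekete--Luk\'acs representation of univariate nonnegative polynomials on $[-1,1]$ (\Cref{thm::markov-felete-lucas}, \Cref{lem::inclusion_quad_module}). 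The Remark after \eqref{eq::perturbation_poly} explicitly notes that the univariate-SoS multipliers of earlier works give worse degree and norm bounds; this bounded, non-square echelon together with Fekete--Luk\'acs is exactly the missing device that keeps both $\deg p$ and $\norm{p}$ polynomial in $\cst,\ d(f),\ \epsilon(f)^{-1}$ and yields the stated exponents.
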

Notice that the only parameters in the bound that depend on $f$ are $d(f)$ and $\epsilon(f)$. We also remark that exponents in \Cref{thm::polynomial_putinar} have been simplified for the
sake of readability and are not optimal: see \Cref{eq::sharp exp} for sharper bounds, especially for the case $n \gg 0$. Moreover we remark that the assumption $n \ge 2$, only used to do this simplification, is not a serious limitation since the univariate case is already well studied, see for instance \cite{powers_polynomials_2000}.

In the definition of $\epsilon(f)$ we use the max norm $\norm{\cdummy}$ on $\ubox$ instead of $\norm{\cdummy}_{\vb X}$ used in  \cite{nie_complexity_2007}, because it does not depend on the choice of a basis and on the representation of the polynomials. 
However, for polynomials of bounded degree, the two norms  are equivalent. Using \cite[lem.~7]{nie_complexity_2007} to express our bound with $\norm{\cdummy}_{\vb X}$ would result in an extra factor $2^{2.5 n {\textit{\L}}} n^{2.5 d(f) n {\textit{\L}}} d(f)^{2.5 n {\textit{\L}}}$, while keeping the exponent of $\epsilon(f)$.

In \Cref{sec::polynomial_putinar} we develop in detail the proof of
\Cref{thm::polynomial_putinar}. The ingredients for the proof are
introduced in \Cref{sec::ingredients}. The main differences with the
one of \cite{nie_complexity_2007} is the use of an effective
Schmüdgen's Positivstellensatz on the unit box \cite{laurent_effective_2021},
and an effective approximation of regular functions on the unit interval, see \Cref{thm:chebyshev}.
Moreover in \Cref{subsec::delta} we prove that the main exponent of the bound, i.e. the Łojasiewicz exponent ${\textit{\L}}$, is equal to $1$ for regular polynomial optimization problems, see \Cref{def::CQC} and \Cref{thm::L_equal_1}. The corollary of \Cref{thm::polynomial_putinar} in these regular cases is \Cref{cor::polynomial_putinar_regular}.

As a corollary of \Cref{thm::polynomial_putinar} we analyse the
convergence of
Lasserre hierarchies used in polynomial optimization. In \Cref{sec::lasserre_hierarchy} we focus on the optimum of the hierarchy, proving in \Cref{thm::polynomial_lasserre_sos} and \Cref{thm::polynomial_lasserre} a new, general polynomial convergence as corollary of our main result (see \Cref{cor::polynomial_lasserre} for regular Polynomial Optimization Problems).
On the other hand in \Cref{sec::convergence_of_moments} we focus on the convergence of the feasible truncated pseudo-moment sequences of the moment hierarchy to truncated moment sequences of measures supported on $S$: we prove in \Cref{thm::moment_rate_convergence_prob} that we can bound their Hausdorff distance using \Cref{thm::polynomial_putinar}.

\subsection{Truncated pseudo-moment sequences and measures}
\label{subsec::truncated_moments}
Dualizing our point of view, we consider
the convex cone $\cM(S)$ of Borel measures supported on $S$, which is dual to $\pos(S)$. We denote by $\cMone (S)$ the set of Borel
\emph{probability} measures supported on $S$.

The dual of polynomials is described as follows (see \cite{mourrain_polynomialexponential_2018} for more details). For $L \in (\R[\vb{X}])^* =\hom_{\RR}(\RRg, \R)$, we denote $\braket{L}{f} = L (f)$ the application of $L$ to $f \in \R[\vb X]$, to emphasize the dulity pairing between $\RRg$ and $(\R[\vb{X}])^*$. Recall that $(\R[\vb{X}])^*\cong \R[[\vb{Y}]] \coloneqq \R[[Y_1,\dots,Y_n]]$, with the isomorphism given by:
\[
    (\R[\vb{X}])^* \ni L \mapsto \sum_{\alpha \in \N^n} \braket{L}{\vb X^{\alpha}} \frac{\vb Y^{\alpha}}{\alpha !} \in \R[[\vb{Y}]],
\]
where $\{\frac{\vb Y^{\alpha}}{\alpha !}\}$ is the dual basis\footnote{To be more precise, \emph{basis} means here a Schauder basis of $\R[[\vb{Y}]]$ equipped with the $ (Y_1, \dots Y_n)$-adic topology.} to $\{\vb X^{\alpha} \}$, i.e. $\braket{\vb Y^{\alpha}}{\vb X^{\beta}}=\alpha !\, \delta_{\alpha,\beta}$. With this basis we can also identify $L \in (\R[\vb{X}])^*$
 with its sequence of coefficients (\emph{pseudo-moments} of $L$) $(L_{\alpha})_{\alpha}$, where $L_{\alpha} = \braket{L}{\vb X^{\alpha}}$.

 Among all the linear functionals of special importance are the ones
 \emph{coming from a measure}, i.e. $L \in \RRg^*$ such that
 there exists a Borel measure $\mu\in \cM(S)$ with $\braket{L}{f} = \int f \dd{\mu}$ for all $f \in \RRg$. In this case the sequence $(\mu_{\alpha})_{\alpha}$ associated with $\mu$ is the sequence of \emph{moments}: $\mu_{\alpha}=\int X^{\alpha} \dd{\mu}$. We are interested in the case when $S$ is compact. In such a case the moment problem is \emph{determinate}, i.e. the sequence of moments $(\mu_{\alpha})_{\alpha}$ determines uniquely $\mu \in \cM(S)$, see for instance \cite[ch.~14]{schmudgen_moment_2017}. Therefore we will identify $\mu$ with its associated linear functional
acting on polynomials (or equivalently with its sequence of moments),
so that $\cM(S) \subset (\R[\vb{X}])^*$.

We will work in the \emph{truncated} setting, i.e. when we restict our
linear functionals to a fixed, finite dimensional subspace of
$\RRg$. In particular we denote $(\cdot)^{[t]}$ the restriction of a
linear functional (or of a family of linear functionals) to $\RRg_t$,
i.e. to polynomials of degree at most $t$. In coordinates, if $L
= (L_{\alpha})_{\abs{\alpha} \le d} \in \RRg_d^*$ then
$L^{[t]} = (L_{\alpha})_{\abs{\alpha} \le t} \in \RRg_t^*$, i.e. $L^{[t]}$ is the truncation of the pseudo-moment sequence to degree $t$.

We are interested in the dual algebraic objects to truncated quadratic modules: the \emph{truncated positive linear functionals}
\[
    \cL_\ell(\vb g) = \{ \, L \in (\R[\vb X]_\ell)^* \mid \forall q \in \cQ_\ell(g) \  \braket{L}{q}\ge 0 \, \} = \tqgen{g}{\ell}^{\vee},
\]
i.e. $\cL_\ell(\vb g)$ is the dual convex cone to $\cQ_\ell(\vb g)$. See \cite{rockafellar_convex_1997} for more about convex cones and convex duality, and \cite{grigoriy_semidefinite_2012} for their use in Optimization and Convex Algebraic Geometry.
We define the affine section
\[
    \cLone_\ell(\vb g) = \big\{ \, L \in \cL_\ell(\vb g) \mid \braket{L}{1}=1 \, \big\}.
\]
Let $t = \floor{\frac{\ell}{2}}$. We verify that for $L \in \cL_\ell(\vb g)$, $\braket{L}{1} = 0$ implies $L^{[t]} = 0$, in order to prove that $\cLone_\ell(\vb g)^{[t]}$ is a generating section of $\cL_\ell(\vb g)^{[t]}$.
Assume that $\braket{L}{1}=0$.
For all $h \in \RRg_t$ and $x \in \R$, we have $0 \le \braket{L}{(1+x h)^2} = \braket{L}{1} + 2x\braket{L}{h} + x^2 \braket{L}{h^2}$. If $\braket{L}{1} = 0$, then the polynomial $x \mapsto 2x\braket{L}{h} + x^2 \braket{L}{h^2}$ is positive on $\R$ and has a zero at $x =0$. Thus $x=0$ is a double root and $\braket{L}{h} = 0$. This implies that $L$ restricted to polynomials of degree $\le t$ is zero, i.e. $L^{[t]} = 0$. Therefore if $L^{[t]} \neq 0$ then $\braket{L}{1}>0$ and $L^{[t]} = \braket{L}{1} \frac{L^{[t]}}{\braket{L}{1}}$, with $\frac{L^{[t]}}{\braket{L}{1}}\in \cLone_{\ell}(\vb g)^{[t]}$. This shows that $\cLone_\ell(\vb g)^{[t]}$ is a generating section of $\cL_\ell(\vb g)^{[t]}$.

Truncated positive linear functionals are an outer approximation of measures supported on $S$. They are used in Polynomial Optimization Problems (POP) to compute lower approximations of the minimum of a polynomial function $f$ on $S$, see \Cref{sec::lasserre_hierarchy}. Under the Archimedean assumption, convergence to measures of the linear functionals realizing the lower approximations have been proved in \cite[th.~3.4]{schweighofer_optimization_2005} for POP.

However nothing is said about the rate of convergence. We use \Cref{thm::polynomial_putinar}, quantifying how good is the inner approximation of positive polynomials by truncated quadratic modules, to answer the question we are interested in: how good is the outer approximation of (probability) measures by truncated positive linear functionals (of total mass one)? \Cref{thm::moment_rate_convergence_prob} gives the answer. In the theorem we bound the Hausdorff distance $\hdist{\cdummy}{\cdummy}$
between the outer approximation and the measures supported on $S$, where $\displaystyle \hdist{A}{B} = \max \big\{\sup_{a\in A} \dist{a}{B}, \ \sup_{b\in B} \dist{A}{b} \big\}$.

\begin{theorem}\label{thm::moment_rate_convergence_prob}
  Assume $n \ge 2$ and that the normalisation assumptions \eqref{assum::norm} are satisfied, and in particular that $1 - \norm{\vb X}_2^2 = q \in \cQ_{\ell_0}(\vb g)$. Let $0 <
  \epsilon \le {1\over 2}$, $t\in \N_{+}$ and $\ell\in \N$ such that $\ell \ge \gamma(n,\vb g)\, 6^{2.5n\textit{\L}}\, t^{6n\textit{\L}}\, \binom{n+t}{t}^{2.5n\textit{\L}}
  \epsilon^{-{2.5n\textit{\L}}}$ and $\ell\ge 2t+\ell_{0}$, with $\gamma(n, \vb g)$ given by \Cref{thm::polynomial_putinar}. Then
   \[
     \hdist{\cMone(S)^{[2t]}}{\cLone_\ell(\vb g)^{[2t]}}\le \epsilon.
   \]
\end{theorem}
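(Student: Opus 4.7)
The inclusion $\cMone(S)^{[2t]} \subseteq \cLone_\ell(\vb g)^{[2t]}$ is immediate: every probability measure $\mu$ on $S$ integrates each element of $\cQ_\ell(\vb g)$ to a non-negative number and has total mass $1$. Consequently the Hausdorff distance equals $\sup_{L \in \cLone_\ell(\vb g)^{[2t]}} \dist{L}{\cMone(S)^{[2t]}}$, and it suffices to bound this supremum. I would argue by contradiction: suppose some $L \in \cLone_\ell(\vb g)^{[2t]}$ lies at distance strictly greater than $\epsilon$ from $\cMone(S)^{[2t]}$. Since this set is closed convex in the finite-dimensional normed space $\RRg_{2t}^*$, Hahn--Banach separation produces a polynomial $f \in \RRg_{2t}$ of dual-norm at most $1$ with
\[
\braket{L}{f} + \epsilon \le \inf_{\mu \in \cMone(S)^{[2t]}} \braket{\mu}{f} = \min_{x \in S} f(x) = f^{*},
\]
the last identity holding because the infimum of $\int f\,d\mu$ over probability measures on $S$ is attained at Dirac masses supported at minimizers of $f$ on $S$.

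The core step is to apply \Cref{thm::polynomial_putinar} to the perturbation $\tilde f := f - f^* + \delta$ for $\delta$ chosen slightly smaller than $\epsilon$. The polynomial $\tilde f$ has degree at most $2t$, minimum $\tilde f^* = \delta > 0$ on $S$, and max norm $\norm{\tilde f} \le 2\norm{f} + \delta$ on $[-1,1]^n$. The quantity $\norm{f}$ is in turn controlled from the unit dual-norm bound via norm-equivalence on the finite-dimensional space $\RRg_{2t}$: comparing the norm used to measure the Hausdorff distance with the max norm $\norm{\cdummy}$ introduces a multiplicative factor of order $t\cdot \binom{n+t}{t}$. Substituting $d(\tilde f) \le 2t$ and $\epsilon(\tilde f) = \delta/\norm{\tilde f}$ into the degree bound of \Cref{thm::polynomial_putinar} and absorbing numerical constants into $6^{2.5n\textit{\L}}$ reproduces exactly the hypothesis $\ell \ge \gamma(n,\vb g)\, 6^{2.5n\textit{\L}}\, t^{6n\textit{\L}}\, \binom{n+t}{t}^{2.5n\textit{\L}}\, \epsilon^{-2.5n\textit{\L}}$; the auxiliary requirement $\ell \ge 2t + \ell_0$ ensures that the Archimedean element $1 - \norm{\vb X}_2^2 \in \cQ_{\ell_0}(\vb g)$ is available in $\cQ_\ell(\vb g)$ together with $\tilde f$ (of degree $\le 2t$).

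\Cref{thm::polynomial_putinar} thus guarantees $\tilde f \in \cQ_\ell(\vb g)$. Using the positivity of $L$ on $\cQ_\ell(\vb g)$ together with $\braket{L}{1} = 1$, one obtains $\braket{L}{\tilde f} = \braket{L}{f} - f^* + \delta \ge 0$, that is, $\braket{L}{f} \ge f^* - \delta > f^* - \epsilon$ (since $\delta < \epsilon$), contradicting the separating inequality $\braket{L}{f} \le f^* - \epsilon$. Hence $\dist{L}{\cMone(S)^{[2t]}} \le \epsilon$ for every $L \in \cLone_\ell(\vb g)^{[2t]}$, as required.

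The main obstacle is the delicate norm bookkeeping: the Hausdorff distance is measured in a norm on $\RRg_{2t}^*$, while $\epsilon(\tilde f)$ in \Cref{thm::polynomial_putinar} is defined via the max norm on the unit box. Tracking the equivalence constants between the dual norm on $\RRg_{2t}$ and the max norm $\norm{\cdummy}$, so that they combine into exactly the factor $t\binom{n+t}{t}$ and yield the stated exponents $t^{6n\textit{\L}} \binom{n+t}{t}^{2.5n\textit{\L}}$ rather than something larger, is the core technical part of the argument.
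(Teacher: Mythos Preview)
Your argument is essentially correct and in fact more direct than the paper's. The paper does \emph{not} separate from $\cMone(S)^{[2t]}$ directly. Instead it first establishes an intermediate result (\Cref{thm::moment_rate_convergence}) bounding the distance from $\cLone_\ell(\vb g)^{[t]}$ to the full cone $\cM(S)^{[t]}$ of \emph{unnormalised} truncated measures, using Haviland's theorem (\Cref{measures_positive_polys}) to identify the supporting half-spaces of $\cM(S)^{[t]}$ with positive polynomials of unit $\ell^2$-norm, and a general convex-geometry lemma (\Cref{lem::convex_bound}). Only afterwards does it pass to probability measures: given $L\in\cLone_\ell(\vb g)^{[2t]}$, it takes the nearest $\mu\in\cM(S)^{[2t]}$, then renormalises to $\mu/\mu_0\in\cMone(S)^{[2t]}$ and controls the extra error $\norm{\mu-\mu/\mu_0}_2$ via a bound on $\norm{L^{[2t]}}_2$ (\Cref{lem::moment_norm}). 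Your route bypasses this two-step detour entirely, because the identity $\inf_{\mu\in\cMone(S)}\braket{\mu}{f}=f^*$ lets you separate from the \emph{section} rather than the cone and feed the separating polynomial straight into \Cref{thm::polynomial_putinar}.

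One point to correct: your explanation of the hypothesis $\ell\ge 2t+\ell_0$ is not what the paper uses it for. In the paper this condition is there solely to invoke \Cref{lem::moment_norm}, the pseudo-moment norm bound $\norm{L^{[2t]}}_2\le\sqrt{\binom{n+t}{t}}(t{+}1)$, which is needed only for the renormalisation step. Your direct argument never renormalises, so in your approach that hypothesis is actually superfluous; it is not about ``making the Archimedean element available'', since \Cref{thm::polynomial_putinar} already absorbs the Archimedean data into $\gamma(n,\vb g)$. Relatedly, your claimed norm-comparison factor ``of order $t\binom{n+t}{t}$'' is not what your own argument produces: separating in degree $2t$ with $\norm{f}_2=1$ gives $\norm{f}\le\binom{n+2t}{2t}^{1/2}$ via \Cref{lem::norm_comparison}, so the bound you actually obtain has $\binom{n+2t}{2t}^{1.25n\textit{\L}}$ and $(2t)^{3.5n\textit{\L}}$ rather than the paper's $\binom{n+t}{t}^{2.5n\textit{\L}}$ and $t^{6n\textit{\L}}$. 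These are comparable (indeed your $t$-exponent is smaller), but they are not literally the stated constants; the paper's particular factor $t\sqrt{\binom{n+t}{t}}$ arises from \Cref{lem::moment_norm}, not from a polynomial norm comparison.
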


The proof of \Cref{thm::moment_rate_convergence_prob} is developed in \Cref{sec::convergence_of_moments}.
The corollary of \Cref{thm::L_equal_1} in regular cases with $\textit{\L}=1$ is \Cref{cor::moment_rate_convergence_prob}.

\subsection{Related works}
Complexity analysis in Real Algebraic Geometry is an active area of
research, where obtaining good upper bounds is challenging. See for
instance \cite{lombardi_elementary_2020} for elementary recursive
degree bounds in Kivrine-Stengle Positivstellensatz, and
\cite{safey_el_din_complexity_2018} for computation complexity of real
radicals.
Among all the Stellens\"atzen, we consider Putinar's
Positivstellensatz, which allows a denominator free representation of strictly positive polynomials and
has well-know applications in Polynomial Optimization. The representation of strictly positive polynomials has a long history. For instance P\"olya \cite{polya_uber_1928} gave a representation of homogeneous polynomials $f$ strictly positive on the simplex $\Delta$ as ratio of a polynomial with positive coefficients and $\norm{\vb X}_2^{k}$, for some $k$. It is interesting to notice that, although no explicit degree bounds were presented, the degree of the representation depends on the sup norm of $f$ on $\Delta$ and on its minimum $f^* > 0$, i.e. on $\epsilon(f)$, in analogy with \Cref{thm::polynomial_putinar}. Another representation for homogeneous polynomials has been proved by Reznick \cite{Reznick1995UniformDI}, where it is shown that an homogeneous polynomial $f$ strictly positive on $\R^n \setminus \{ 0 \}$ (a \emph{positive definite form}) can be written as ratio of even powers of linear forms and $\norm{\vb X}_2^{k}$, for some $k$. Degree bounds for the representation are provided, and again we find a dependence on $\epsilon(f)$ (defined restricting $f$ to the $n-1$ hypersphere) with exponent equal to $-1$.

A general, effective version of Putinar's Positivstellensatz have been
proved in \cite{nie_complexity_2007} (see also
\cite{schweighofer_complexity_2004} for a general effective
Schmüdgen's Positivstellensatz).
This result is used in \cite{magron_exact_2021} to give bounds on
the degree of rational SoS positivity certificates, which are
exponential in the bit-size of the input polynomials $f,\vb g$.
Compared to \cite{nie_complexity_2007}, \Cref{thm::polynomial_putinar} gives
degree bounds, which are polynomial and
not exponential in $\epsilon(f)$. This implies a polynomial rate
convergence of Lasserre hierarchies, see
\Cref{thm::polynomial_lasserre_sos}, and not logarithmic as in
\cite{nie_complexity_2007}. For special semialgebraic sets, the bounds
on the convergence rate can be improved: see for instance
\cite{laurent_effective_2021} for convergence on the unit box and
\cite{fang_sum--squares_2020} for the unit sphere. The convergence
rate of the upper bounds of Lasserre SoS density hierarchy over the sphere
has been studied in \cite{de_klerk_convergence_2019}.

The proof of \Cref{thm::polynomial_putinar} is based on the
construction of a perturbation polynomial $q \in \cQ(\vb g)$ and the reduction to a simpler
semialgebraic set. This construction of the perturbation polynomial
$q$ using univariate SoS, has already been used in
\cite{schweighofer_complexity_2004},
\cite{schweighofer_optimization_2005}, \cite{nie_complexity_2007},
\cite{averkov_constructive_2013},
\cite{kurdyka_convexifying_2015}. In \cite{MAI2022101663}  Mai and Magron investigate with a similar technique the representation of strictly positive polynomials on arbitrary semialgebraic sets as ratio of polynomials in the quadratic module and $(1+\norm{\vb X}^2_2)^k$ for some $k$, giving degree bounds for the representation. These bounds are polynomial on $f^*$ (and thus on $\epsilon(f)$), but the exponent and the constant are not explicit in the general case. Moreover they remark that they were not able to derive a polynomial Effective Putinar's Positvstellensatz using their perturbation polynomials, defined recursively.

Our main improvements in the proof
are the generalisation from univariate SoS or recursively defined perturbation polynomials to a positive polynomial
echelon function, see \Cref{subsec::echelon}, and the use of an
Effective Schmüdgen's Positivstellensatz on the unit box from
\cite{laurent_effective_2021}.  Moreover in \Cref{subsec::delta} we
analyse regular cases that result in very simple exponents, see
\Cref{cor::polynomial_putinar_regular}. \Cref{cor::polynomial_putinar_regular} can be applied in particular in the case of a single ball constraint, that was analysed in \cite{MAI2022101663} for the Putinar-Vasilescu's Positivstellensatz, that introduces a denominator: the exponent in this case is equal to $-65$, while \Cref{cor::polynomial_putinar_regular} gives $-2.5n$. 
We conjecture that it is possible to remove the dependence on $n$ in the exponent of the Effective Putinar's Positivstellensatz.

This approach with a perturbation polynomial $q$ has also been used in
\cite{kurdyka_convexifying_2015} to prove a Weierstrass Approximation
theorem on compact sets for positive polynomials, where the
approximation is done with polynomials in the quadratic module
$\cQ(\vb g)$. We obtain an equivalent result with our polynomial
echelon functions in \Cref{thm::quad_module_weirstrass} with bounds on
the degree of $q$.

Convergence of pseudo-moments sequences to measures in Lasserre's hierarchies
has been studied in \cite{schweighofer_optimization_2005} for
Polynomial Optimization Problems and more generally in
\cite{tacchi_convergence_2021} for Generalized Moment Problems (GMP).
The convergence rates of moment hierarchies in GMP over the simplex and
the sphere have been studied in \cite{kirschner_convergence_2021}.
To our best knowledge there is no analysis of the convergence rate
for general compact basic semialgebraic sets in
the literature. In \Cref{thm::moment_rate_convergence_prob} we prove
such a rate of convergence for the pseudo-moment sequences used in Polynomial
Optimization, deducing this speed from \Cref{thm::polynomial_putinar}.

\section{The ingredients of the proof}
\label{sec::ingredients}
To prove the polynomial bound
for the Effective Putinar's
Positivstellensatz
(\Cref{thm::polynomial_putinar}), we proceeds as follows, refining the approach in
\cite{schweighofer_optimization_2005}, \cite{nie_complexity_2007},
\cite{averkov_constructive_2013}:
\begin{itemize}
 \item We perturb $f$ into a polynomial $p$ such that $p$ is strictly positive
   on the box $\ubox$ and $f-p$ is in the quadratic module $\cQ(\vb g)$;
 \item We compute an SOS representation of $p$ in $\cQ(1 -\norm{\vb
     X}_2^{2})$ to deduce the representation of $f \in \cQ(\vb g)$.
\end{itemize}
Notice that if $f>0$ on $\ubox$ then we can directly apply \Cref{ass::putinar_on_B} and \Cref{lem::box_to_ball} to conclude the proof. Therefore in the following we always assume that there exists $x\in \ubox \setminus S$ such that $f(x) \le 0$.

To compute the perturbed polynomial $p$, we use a univariate
echelon-like polynomial, which shape and degree depends on
the distance between a level-set of $f$ and $S$ and on a lower bound of
the algebraic distance to $S$.
We detail these ingredients hereafter.

\subsection{Distance between level sets of $f$ and $S$}
\label{subsec::dist_AS}

We define the complementary in $\ubox$ of a neighbourhood of $S$,
where $f$ is strictly smaller than $f^*$:\[A = \{ \, x \in \ubox \mid f(x)
  \le \frac{3f^*}{4} \,\}\]
that is a sublevel set of the function $f$.

We are going to bound the distance from $A$ to $S$ in terms of $\epsilon(f)$.
We recall first a Markov inequality theorem, bounding the norm of the
gradient of a polynomial function on a convex body, in the special case of the box $\ubox$.

\begin{theorem}[{\cite[th.~3]{kroo_bernstein_1999}}]
\label{thm::markov_inequality}
  Let $p \in \RRg_d$ be a polynomial of degree $\le d$. Then:
  \[
    \norm{\norm{\grad p}_2} \le (2d^2-d) \norm{p}.
  \]
\end{theorem}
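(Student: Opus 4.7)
The plan is to reduce this multivariate Markov-type bound to the classical univariate Markov inequality, $\|q'\|_{\infty,[-1,1]} \le d^2 \|q\|_{\infty,[-1,1]}$ for a one-variable polynomial $q$ of degree at most $d$, by restricting $p$ to a well-chosen line in $\ubox$ along the direction of the maximal gradient.

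First I would exploit compactness of $\ubox$ to pick $x^* \in \ubox$ attaining the maximum $\max_{x \in \ubox}\|\nabla p(x)\|_2$. If the gradient vanishes at $x^*$, the inequality is trivial; otherwise set $v = \nabla p(x^*)/\|\nabla p(x^*)\|_2$, a unit vector. Then $\|\nabla p(x^*)\|_2 = \nabla p(x^*)\cdot v = q'(0)$, where $q(t) = p(x^* + tv)$ is a univariate polynomial of degree at most $d$, so it suffices to bound $|q'(0)|$. Let $[a, b]$ be the maximal interval containing $0$ for which $x^* + tv \in \ubox$; on this interval $|q(t)| \le \|p\|$, and after the affine change of variables sending $[a, b]$ to $[-1, 1]$ the classical one-variable Markov inequality yields $|q'(0)| \le (2d^2 / L)\|p\|$ with $L = b - a$.

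The hard part is that $L$ depends on both $x^*$ and $v$ and can degenerate: when $x^*$ sits near a vertex of $\ubox$ and $v$ points diagonally outward, the chord length can shrink arbitrarily (and even become $0$ at a corner), so a naive line-restriction argument cannot yield a dimension-free constant. A coordinate-wise application of the univariate Markov inequality to each $\partial_i p$ would only give a bound of order $\sqrt{n}\, d^2$, which still carries a dimensional factor. To extract the dimension-free constant $(2d^2 - d)$ I would follow the strategy of \cite{kroo_bernstein_1999}: exploit the product structure of $\ubox$ together with the extremality of Chebyshev polynomials, reflecting $q$ across the face(s) of $\ubox$ nearest to $x^*$ to virtually extend the chord while leaving the sup-norm unchanged. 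The technical heart of the argument -- namely pinning down the precise constant $2d^2 - d$ rather than a softer $O(d^{2})$ bound -- is where I would invoke the cited result of Kro\'o directly rather than reprove it from scratch.
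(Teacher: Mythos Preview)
The paper does not prove this statement at all: it is quoted verbatim as \cite[th.~3]{kroo_bernstein_1999} and used as a black box, with no accompanying proof. Your proposal, after correctly sketching the line-restriction heuristic and correctly diagnosing why it cannot by itself yield the dimension-free constant $2d^2-d$, likewise ends by invoking Kro\'o's result for the sharp constant. So there is nothing to compare: both the paper and your write-up ultimately defer to the cited reference, and your surrounding discussion is sound but does not constitute an independent proof.
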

Recall that the Lipschitz constant $L_f$ of $f$ is the smallest real number such that $\abs{f(x) - f(y)} \le L_f \norm{x-y}_{2}$ for all $x,y$ in the domain of $f$.
Using \Cref{thm::markov_inequality} to bound the Lipschitz constant of
$f$ on $\ubox$, we can lower bound the distance between $A$ and $S$.
\begin{proposition}\label{prop::distance_bound}
  Let $A$ and $S$ be as above. Then $\hdist{A}{S} \ge \frac{\epsilon(f)}{8d^2}$.
\end{proposition}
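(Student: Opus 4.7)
The plan is to lower bound the Hausdorff distance by exhibiting that every point of $A$ must be far from $S$, using that $f$ separates the two sets by a value gap of at least $f^*/4$ and that the growth of $f$ is controlled by a Lipschitz constant coming from Markov's inequality.

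First, note that $A$ is nonempty by the standing assumption: there exists $x \in \ubox \setminus S$ with $f(x) \le 0 \le \tfrac{3 f^*}{4}$, so $x \in A$. Now fix any $a \in A$ and any $s \in S$. By definition, $f(a) \le \tfrac{3 f^*}{4}$ and $f(s) \ge f^*$, so $f(s) - f(a) \ge \tfrac{f^*}{4}$. The box $\ubox$ is convex, so the Lipschitz constant $L_f$ of $f$ on $\ubox$ satisfies $L_f \le \sup_{x \in \ubox} \|\nabla f(x)\|_2$, and by \Cref{thm::markov_inequality} applied componentwise (or directly in the stated form) we get $\sup_{x \in \ubox} \|\nabla f(x)\|_2 \le (2d^2 - d)\,\|f\|$. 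Consequently,
\[
 \|a - s\|_2 \;\ge\; \frac{f(s) - f(a)}{L_f} \;\ge\; \frac{f^*/4}{(2d^2 - d)\,\|f\|} \;\ge\; \frac{\epsilon(f)}{8 d^2},
\]
where the last inequality uses $4(2d^2 - d) \le 8 d^2$.

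Taking the infimum over $s \in S$ yields $\dist{a}{S} \ge \frac{\epsilon(f)}{8 d^2}$ for every $a \in A$. Since $A$ is nonempty, this gives $\sup_{a \in A} \dist{a}{S} \ge \frac{\epsilon(f)}{8 d^2}$, and by definition of the Hausdorff distance $\hdist{A}{S} \ge \sup_{a \in A} \dist{a}{S}$, which concludes the proof.

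There is essentially no obstacle: the only slightly delicate point is ensuring $A \neq \emptyset$ (handled by the blanket assumption that $f$ is nonpositive somewhere in $\ubox \setminus S$) and remembering that the Lipschitz constant on a convex set is controlled by the sup-norm of the gradient, which is where the quoted Markov-type bound of \cite{kroo_bernstein_1999} plugs in cleanly.
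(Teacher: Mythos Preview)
Your proof is correct and follows essentially the same route as the paper: bound the Lipschitz constant of $f$ on $\ubox$ via the Markov-type inequality of \cite{kroo_bernstein_1999}, use the value gap $f^*/4$ between $A$ and $S$ to get a uniform lower bound on $\|a-s\|_2$, and pass to the Hausdorff distance. The only addition is that you make explicit the nonemptiness of $A$ (handled by the standing assumption at the start of \Cref{sec::ingredients}), which the paper leaves implicit.
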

\begin{proof}
  We first relate the Lipschitz constant $L_f$ of $f$ on $\ubox$ with $\norm{f}$.

  From the mean value theorem we deduce that for all $x, y \in \ubox$ we have $\abs{f(x) - f(y)} \le \norm{\norm{\grad f}_2} \norm{x-y}_{2}$. Then from the definition of Lipschitz constant and \Cref{thm::markov_inequality}:
  \begin{equation}
    \label{eq::lips}
    L_f \le \norm{\norm{\grad f}_2} \le (2d^2-d)\norm{f} \Rightarrow \frac{1}{L_f} \ge \frac{1}{(2d^2-d)\norm{f}}.
  \end{equation}

  Now let $x \in A$ and $y \in S$. By definition of $L_f$ we have $\abs{f(x)-f(y)} \le L_f\norm{x-y}_{2}$. Since $x \in A$ we have $f(x) \le \frac{3f^*}{4}$; since $y \in S$ we have $f(y) \ge f^*$: thus $\abs{f(x)-f(y)} \ge \frac{f^*}{4}$ and  $\norm{x-y}_{2} \ge \frac{f^*}{4L_f}$.
  As the inequality hold for all $x \in A$ and $y \in S$ we can use \Cref{eq::lips} to conclude:
  \[
    \hdist{A}{S} \ge \frac{f^*}{4L_f} \ge \frac{f^*}{4
      (2d^2-d)\norm{f}} = \frac{\epsilon(f)}{4(2d^2-d)}\ge \frac{\epsilon(f)}{8d^2} .
  \]
 \end{proof}
\subsection{Bounds on the algebraic distance to $S$}
\label{subsec::delta}
The algebraic distance to the set $S$ is the continuous semialgebraic
function defined by
\[
   G(x) =  \abs{\min\{g_{1}(x), \ldots, g_r(x), 0\}}.
\]
Clearly, $G(x)=0$ if and only if $x\in S$, and $G(x)>0$ if $x\not\in
S$.
We are going to bound from below the function $G$ on $A$, that is
find $\delta \in \RR_{>0}$ such that
\begin{equation}
  \label{eq::delta}
 \forall x \in A, \ G(x) \ge \delta
\end{equation}
(such a $\delta$ exists since $A$ is compact and $G(x)>0$ on $A$).

To express such a $\delta$ in terms of $\epsilon(f)$, we use
Łojasiewicz inequalities, introduced by Łojasiewicz in \cite{lojasiewicz_1959}, following
and expanding the approach in \cite[lem.~13]{nie_complexity_2007}.
\begin{theorem}[{\cite{lojasiewicz_1959}, \cite[cor.~2.6.7]{bochnak_real_1998}}]
\label{thm::lojasiewicz}
  Let $B$ be a closed and bounded semialgebraic set and let $f, g$ be two continuous semialgebraic functions from $B$ to $R$ such that $f^{-1}(0) \subset g^{-1}(0)$. Then there exists $c, \textit{\L} \in \RR_{>0}$ such that $\forall x \in B$:
  \[
    \abs{g(x)}^{\textit{\L}} \le c \abs{f(x)}.
  \]
\end{theorem}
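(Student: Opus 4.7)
The plan is to reduce the multivariate Łojasiewicz inequality to a univariate statement about semialgebraic functions near zero. Introduce the auxiliary function
\[ \phi(t) = \sup\{\, |g(x)| : x \in B,\ |f(x)| \le t \,\}, \qquad 0 \le t \le M, \]
where $M = \max_{x \in B} |f(x)|$. Since $B$ is compact and $f,g$ are continuous and semialgebraic, $\phi$ is well-defined, bounded, and weakly increasing; moreover, by the Tarski--Seidenberg principle (projection of a semialgebraic set is semialgebraic), the graph of $\phi$ is a semialgebraic subset of $\RR^{2}$. The hypothesis $f^{-1}(0) \subseteq g^{-1}(0)$, together with continuity and compactness of the zero fibre of $f$, forces $\phi(0) = 0$ and upper semicontinuity of $\phi$ at $0$ from the right.

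The core step is the local structure of one-variable semialgebraic germs at $0$. By cell decomposition of semialgebraic subsets of $\RR^{2}$ (or equivalently by the Puiseux theorem applied to semialgebraic curves, itself a consequence of Tarski--Seidenberg), any bounded semialgebraic function $\phi\colon [0,\eta) \to \RR$ with $\phi(0)=0$ admits an asymptotic expansion
\[ \phi(t) = a\, t^{p/q} + o(t^{p/q}) \quad \text{as } t \to 0^{+}, \]
with $a>0$ and $p,q \in \N_{>0}$. Shrinking $\eta$ if necessary, this yields constants $C>0$ and $\textit{\L} = q/p > 0$ such that $\phi(t) \le C\, t^{1/\textit{\L}}$ for $0 \le t \le \eta$. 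Unwinding the definition of $\phi$ gives, for every $x \in B$ with $|f(x)| \le \eta$, the inequality $|g(x)|^{\textit{\L}} \le C^{\textit{\L}}\, |f(x)|$.

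It remains to extend the bound globally on $B$. On the compact complementary region $B' = \{\, x \in B : |f(x)| \ge \eta \,\}$, the quantity $|g(x)|^{\textit{\L}} / |f(x)|$ is continuous and therefore bounded by some $C'$; replacing $C^{\textit{\L}}$ by $c = \max\{C^{\textit{\L}}, C'\}$ gives a single constant valid on all of $B$, which is the claim.

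The main obstacle is the one-dimensional fractional-power bound $\phi(t) \le C\, t^{1/\textit{\L}}$, which is really the genuine content of the Łojasiewicz inequality: it relies on semialgebraic cell decomposition (or, equivalently, on the curve selection lemma applied inside the graph of $\phi$). The other ingredients, namely Tarski--Seidenberg, compactness of $B$ and $B'$, and continuity, are standard and routine.
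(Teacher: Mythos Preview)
The paper does not give its own proof of this statement: it is quoted as a classical result, with citations to \L{}ojasiewicz's original paper and to Bochnak--Coste--Roy, Corollary~2.6.7. So there is no in-paper argument to compare against.

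Your argument is the standard one and is essentially the proof in Bochnak--Coste--Roy: reduce to a one-variable semialgebraic function $\phi$ via Tarski--Seidenberg, use the Puiseux-type structure of univariate semialgebraic germs to get a fractional-power bound near $0$, and patch the complement by compactness. Two small points are worth tightening. First, $\phi$ need not be continuous on $[0,\eta)$; what you actually use is that a one-variable semialgebraic function is continuous on each piece of a finite partition of $(0,\eta)$, so on some smaller interval $(0,\eta')$ it is continuous and admits the Puiseux expansion. Second, the expansion $\phi(t)=a\,t^{p/q}+o(t^{p/q})$ with $a>0$ tacitly excludes the degenerate case where $\phi$ vanishes identically near $0$; that case is trivial and should be mentioned separately. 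With these remarks your proof is correct.
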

We use now \Cref{thm::lojasiewicz} and \Cref{prop::distance_bound} to bound $\delta$ in terms of $\epsilon(f)$.

\begin{definition}
\label{def::loja}
Let $\cst, \textit{\L}$ be the constant and exponent of
  Łojasiewicz inequalities (\Cref{thm::lojasiewicz}) for the functions
$G: x\in  \ubox \mapsto G(x) = \abs{\min\{g_{1}, \dots, g_r(x), 0\} }$
and  $D: x\in \ubox \mapsto D(x) = \dist{x}{S}$, that is, for $x\in \ubox$
\begin{equation}\label{eq::loja}
D(x)^{\textit{\L}} \le \cst\, G(x).
\end{equation}
\end{definition}
These constant and exponent are well-defined by
\Cref{thm::lojasiewicz}, since the functions $D$ and $G$ are continuous semialgebraic
and $S = D^{-1}(0) = G^{-1}(0)$.
\begin{lemma}
\label{lem::delta}
  We can choose $\delta =
  \frac{1}{\cst} \left( \frac{\epsilon(f)}{8d^2}\right)^{\textit{\L}}$ in \Cref{eq::delta}, where $\cst, \textit{\L}$
  are defined in \Cref{def::loja}.
\end{lemma}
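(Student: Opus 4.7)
The plan is to combine \Cref{prop::distance_bound} with the Łojasiewicz inequality of \Cref{def::loja}, both of which are already at hand. The statement asks for a uniform lower bound on the algebraic distance $G(x)$ over the sublevel set $A$, and this is naturally obtained by bounding the Euclidean distance $D(x) = \dist{x}{S}$ from below and then converting via \eqref{eq::loja}.

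First, I would extract a pointwise version of \Cref{prop::distance_bound}: although its statement is phrased in terms of the Hausdorff distance, its proof actually establishes the stronger uniform estimate $\norm{x-y}_{2} \ge \frac{f^{*}}{4L_{f}} \ge \frac{\epsilon(f)}{8d^{2}}$ for every $x \in A$ and every $y \in S$, using the Lipschitz constant $L_f$ bounded via Markov's inequality (\Cref{thm::markov_inequality}). Taking the infimum over $y \in S$ yields
\[
  D(x) \ge \frac{\epsilon(f)}{8d^{2}} \qquad \text{for all } x \in A.
\]

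Second, since $A \subset \ubox$ and $D, G$ satisfy \eqref{eq::loja} on $\ubox$, for every $x \in A$ we have
\[
  G(x) \ge \frac{1}{\cst}\, D(x)^{\textit{\L}} \ge \frac{1}{\cst}\left(\frac{\epsilon(f)}{8d^{2}}\right)^{\!\textit{\L}}.
\]
This is exactly the bound \eqref{eq::delta} with the claimed value of $\delta$.

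There is no real obstacle here: the lemma is a direct composition of the distance estimate and the Łojasiewicz inequality, and both have been set up in the immediately preceding paragraphs. The only point requiring a line of care is the note that \Cref{prop::distance_bound} was proved in a pointwise form, so that we may evaluate $D(x)$ from below at each individual $x \in A$ before raising to the power $\textit{\L}$; otherwise a Hausdorff bound alone would not suffice to control $G$ pointwise.
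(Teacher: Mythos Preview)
Your proposal is correct and matches the paper's proof almost verbatim: both extract the pointwise lower bound $D(x) \ge \frac{\epsilon(f)}{8d^{2}}$ from the argument of \Cref{prop::distance_bound} and then apply \eqref{eq::loja} to obtain $G(x) \ge \frac{1}{\cst}\big(\frac{\epsilon(f)}{8d^{2}}\big)^{\textit{\L}}$. Your remark that the Hausdorff statement alone would not suffice, and that one must use the pointwise inequality established in the proof of \Cref{prop::distance_bound}, is a fair observation that the paper leaves implicit.
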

\begin{proof}
By \Cref{prop::distance_bound}, we have $D(x)=\dist{x}{S} \ge
\frac{\epsilon(f)}{8d^2}$ for $x\in A$. Then from \Cref{eq::loja}, we deduce
that for $x\in A$,
\[
  \left( \frac{\epsilon(f)}{8d^2}\right)^{\textit{\L}} \le  \cst G(x) \Rightarrow G(x) \ge \frac{1}{\cst} \left( \frac{\epsilon(f)}{8d^2}\right)^{\textit{\L}}.
\]
Therefore, we can choose $\delta = \frac{1}{\cst} \left( \frac{\epsilon(f)}{8d^2}\right)^{\textit{\L}}$.
\end{proof}

The exponent $\textit{\L}$ in \Cref{def::loja} will play an important role in
the bounds of the Effective Putinar's Positivstellensatz. We show now that, under generic assumptions, we can choose $\textit{\L}{} = 1$, as suggested in the following example.
\begin{example}
\label{ex::L_equal_1}
  Consider the univariate polynomial $g(X) = {\epsilon^2 - X^2}$ and let $S = S(g) = [-\epsilon, \epsilon] \subset [-1,1]$. Now let $x \in [-1,1]$ and $D, G$ be as in \Cref{def::loja}. It is easy to show that:
  \[
    D(x) \le \frac{1}{2\epsilon} G(x).
  \]
  Indeed, if for example $\epsilon \le x \le 1$, we have $D(x) = x -\epsilon$ and $G(x) = x^2 - \epsilon^2 = (x+\epsilon)(x-\epsilon)$ and $D(x) = \frac{1}{x+\epsilon} G(x) \le \frac{1}{2\epsilon}G(x)$. This shows that we can choose $\textit{\L} = 1$ for all $\epsilon > 0$.

  On the other hand if $\epsilon = 0$, i.e. $g(X) = -X^2$ and $S = \{
  0\}$, we have a singular equation. A simple computation shows that
  it is not possible to choose $\textit{\L} = 1$ in this case. The minimum $\textit{\L}$ satisfying the inequality is $\textit{\L}=2$.
\end{example}

We introduce a regularity condition needed to prove $\textit{\L} = 1$, generalizing \Cref{ex::L_equal_1}. This is a standard condition in optimization (see \cite[sec.~3.3.1]{bertsekas_nonlinear_1999}), which implies the so-called Karush–Kuhn–Tucker (KKT) conditions \cite[prop.~3.3.1]{bertsekas_nonlinear_1999}.

\begin{definition}
\label{def::CQC}
Let $x \in \cS(\vb g)$. The active constraints at $x$ are the constraints $g_{i_{1}}, \ldots, g_{i_{m}}$ such that $g_{i_{j}}(x)=0$.
We say that the \emph{Constraint Qualification condition} (CQC) holds at $x$ if for the active constraints $g_{i_{1}}, \ldots, g_{i_{l}}$ at $x$, the gradients $\nabla g_{i_{1}}(x), \ldots,  \nabla g_{i_{m}}(x)$ are linearly independent.
\end{definition}

\begin{lemma}
\label{lem::difference_as_gradient}
  Let $y \in \RR^n \setminus \cS(\vb g)$, and let $z$ be a point in $S = \cS(\vb g)$ minimizing the distance of $y$ to $S$, that is $\dist{y}{S} = \norm{y-z}_2$. If $\{\,g_i \colon i \in I \,\}$ are the active constraints at $z$ and the CQC holds, then there exist $\lambda_i \in \RR_{\ge 0}$ such that:
  \[
    y - z = \sum_{i\in I} \lambda_i \grad (-g_i)(z).
  \]
\end{lemma}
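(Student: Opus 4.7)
The plan is to recognize the statement as the first-order (KKT) optimality conditions for the convex-data constrained optimization problem that defines $z$. Namely, $z$ is by hypothesis a minimizer of the differentiable objective $\phi(x) = \tfrac{1}{2}\norm{y - x}_2^2$ over the feasible set $\{\,x \in \RR^n \mid g_i(x) \ge 0, \ i = 1, \ldots, r\,\}$, and the CQC at $z$ is precisely the linear independence constraint qualification which guarantees the existence of KKT multipliers.

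Concretely, I would form the Lagrangian $L(x, \lambda) = \tfrac{1}{2}\norm{y-x}_2^2 - \sum_{i=1}^r \lambda_i g_i(x)$ and invoke the standard KKT theorem (e.g.\ \cite[prop.~3.3.1]{bertsekas_nonlinear_1999}), which under the CQC assumption yields multipliers $\lambda_1, \ldots, \lambda_r \in \RR_{\ge 0}$ satisfying stationarity $\nabla_x L(z, \lambda) = 0$ and complementary slackness $\lambda_i g_i(z) = 0$. Stationarity reads
\[
(z - y) - \sum_{i=1}^r \lambda_i \grad g_i(z) = 0,
\]
which rearranges to $y - z = \sum_{i=1}^r \lambda_i \grad(-g_i)(z)$. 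Complementary slackness forces $\lambda_i = 0$ whenever $g_i(z) > 0$, so the sum reduces to a sum over the active index set $I$, giving exactly the stated identity.

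Before invoking KKT, one small verification is in order: $z$ must actually exist and must actually be a local minimizer of $\phi$ on $\cS(\vb g)$. Existence is immediate because $\cS(\vb g)$ is closed and $\phi$ is coercive on it; and a nearest point from $y \notin \cS(\vb g)$ is automatically a local (indeed global) minimizer of $\phi$ on $\cS(\vb g)$. The main—and only—obstacle is therefore a bookkeeping one: making sure the sign conventions match the statement. Since the inequality constraints are written as $g_i \ge 0$ (not $-g_i \le 0$), the Lagrange multipliers attach to $-\grad g_i = \grad(-g_i)$, which is precisely the form that appears in the lemma. No further calculation is needed.
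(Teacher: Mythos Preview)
Your proposal is correct and follows essentially the same approach as the paper: both set up the problem of minimizing $\tfrac{1}{2}\norm{y-x}_2^2$ over $\cS(\vb g)$, invoke the KKT conditions via the CQC (citing the same reference \cite[prop.~3.3.1]{bertsekas_nonlinear_1999}), and read off the gradient identity from stationarity together with complementary slackness. Your version is slightly more explicit about the Lagrangian and the existence of the minimizer, but there is no substantive difference.
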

\begin{proof}
  Fix $y \in \RR^n$. Notice that $y - x = - \frac{\grad \norm{y-x}_2^2}{2}$, where the gradient is take w.r.t. $x$. Moreover $z \in S$ such that $\dist{y}{S} = \norm{y-z}_2$ is a minimizer of the following Polynomial Optimization Problem:
  \[
    \min_x \frac{\norm{y-x}_2^2}{2} \colon g_i(x) \ge 0 \ \forall i\in \{1,\dots,r \}.
  \]
  Since the CQC holds at $z$, we deduce from \cite[prop.~3.3.1]{bertsekas_nonlinear_1999} that the KKT conditions hold. In particular:
  \[
    \frac{\grad \norm{y-z}_2^2}{2} = \sum_{i\in I} \lambda_i \grad g_i(z)
  \]
  For some $\lambda_i \in \RR_{\ge 0}$. Therefore $y-z = - \frac{\grad \dist{y}{z}^2}{2} = \sum_{i\in I} \lambda_i \grad (-g_i)(z)$.
\end{proof}
We first fix a point $z \in S$ and consider the points $y$ such that the closest point to $y$ on $S$ is $z$. We prove that $\textit{\L} = 1$ in the sector of these $y$ where all the active constraints at $z$ are strictly negative at $y$.
\begin{lemma}
\label{lem::L_equal_1_local_negative}
  Let $D, G$ as in \Cref{def::loja} and let $z \in \cS(\vb g)$ with active constraints $g_i \colon i \in I$. Then there exists $\epsilon' = \epsilon'(z)>0$ and constant $\cst' = \cst'(z)>0$ such that for all $y$ with:
  \begin{itemize}
    \item $D(y) = \dist{y}{S} = \norm{y-z}_2$;
    \item $D(y) \le \epsilon'$;
    \item $g_i(y) < 0 $ for all $i\in I$,
  \end{itemize}
  we have $D(y) \le \cst' G(y)$.
\end{lemma}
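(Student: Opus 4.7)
The plan is to apply Lemma~\ref{lem::difference_as_gradient} to express $y - z$ as a nonnegative combination of the gradients of the active constraints at $z$, then use a Taylor expansion of each such $g_i$ at $z$ to relate that combination to the values $-g_i(y)$, and finally absorb the quadratic Taylor remainder by taking $\epsilon'(z)$ sufficiently small. Throughout, the idea is that the linear part of the Taylor expansion is what makes the comparison $D \le \cst' G$ happen with $\textit{\L} = 1$, and the CQC hypothesis is exactly what keeps the multipliers $\lambda_i$ of a controlled (linear) size in $D(y)$.

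First, since $z \in S$ is a distance-minimizer to $y$ and CQC holds at $z$, Lemma~\ref{lem::difference_as_gradient} yields coefficients $\lambda_i \ge 0$ for $i \in I$ with
\[
y - z \;=\; \sum_{i\in I}\lambda_i\,\grad(-g_i)(z).
\]
Let $M_z$ denote the matrix whose columns are the vectors $\grad(-g_i)(z)$, $i \in I$. By CQC, $M_z$ has full column rank, so $M_z^{T} M_z$ is invertible and $\lambda = (M_z^{T} M_z)^{-1} M_z^{T} (y-z)$. Setting $K_z \coloneqq \norm{(M_z^{T} M_z)^{-1} M_z^{T}}$ (any matrix operator norm), this gives $\sum_{i\in I}\lambda_i \le \sqrt{|I|}\,K_z\,D(y)$.

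Next, for each $i \in I$, Taylor's formula at $z$ together with $g_i(z) = 0$ yields
\[
-g_i(y) \;=\; \langle \grad(-g_i)(z),\, y - z\rangle + R_i(y), \qquad |R_i(y)| \le C\,\norm{y-z}_2^{\,2} = C\,D(y)^2,
\]
where $C$ is a uniform upper bound on the Hessian norms of $g_1,\dots,g_r$ on $\ubox$ (finite since the $g_i$ are polynomials and $\ubox$ is compact). Substituting into $D(y)^2 = \langle y - z,\, y - z\rangle = \sum_{i\in I}\lambda_i\,\langle \grad(-g_i)(z),\, y - z\rangle$ and using $0 < -g_i(y) \le G(y)$ for every $i\in I$ (which follows from the hypothesis $g_i(y) < 0$ for $i \in I$), I obtain
\[
D(y)^2 \;\le\; \Bigl(\sum_{i\in I}\lambda_i\Bigr) G(y) \;+\; \Bigl(\sum_{i\in I}\lambda_i\Bigr) C\, D(y)^2 \;\le\; \sqrt{|I|}\,K_z\,D(y)\,G(y) \;+\; \sqrt{|I|}\,K_z\,C\,D(y)^3.
\]

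Finally, I choose $\epsilon'(z) \coloneqq 1/(2\sqrt{|I|}\,K_z\,C)$, so that for $D(y) \le \epsilon'$ the cubic term on the right is at most $D(y)^2/2$ and can be absorbed into the left-hand side. Dividing by $D(y)$ (the case $D(y) = 0$ being trivial), this yields $D(y) \le 2\sqrt{|I|}\,K_z\,G(y)$, so the constant $\cst'(z) \coloneqq 2\sqrt{|I|}\,K_z$ works. The only mild obstacle is making the Taylor remainder absorbable, which forces $\epsilon'(z)$ to depend on both the conditioning of the active-gradient matrix (through $K_z$) and on the Hessian bound $C$; this $z$-dependence is precisely why the lemma is stated locally, and the globalisation (presumably the next step in the paper) will require a compactness argument on $S$ together with continuity of the data $K_z$.
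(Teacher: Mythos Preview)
Your proof is correct and follows essentially the same approach as the paper: apply Lemma~\ref{lem::difference_as_gradient} to write $y-z$ as a nonnegative combination of the active gradients, Taylor-expand each $g_i$ at $z$, use the CQC-based invertibility of the Gram matrix $M_z^T M_z$ to control the multipliers, and absorb the quadratic remainder by choosing $\epsilon'(z)$ small. The only difference is organizational: the paper solves the linear system for $\lambda$ in terms of $\vb g_I(y)$ and substitutes back into $y-z$ before taking norms, whereas you first bound $\sum_i \lambda_i$ linearly in $D(y)$ via the pseudo-inverse and then use the identity $D(y)^2=\sum_i \lambda_i\langle \grad(-g_i)(z),\,y-z\rangle$; both routes yield the same constant up to the form $2\sqrt{|I|}\,\|(M_z^T M_z)^{-1}M_z^T\|$.
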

\begin{proof}
Let $z \in S$ and $y \in \R^n$ be such that $D(y) = \norm{y-z}_2$. Consider the Taylor expansion of $g_i$ at $z$ for the active constraints $\{\,g_i \colon i \in I \,\}$: there exists $\vb h_i (y) = (h_{i,1}(y),\dots,h_{i,n}(y))$, where $\lim_{y \to z} h_{i,j}(y) = 0$, such that:
\begin{equation}
\label{eq::taylor_exp}
  g_i (y) = \grad g_i(z) \cdot (y-z) + \sum_{j=1}^n h_{i,j}(y)(y_i-z_i) = \grad g_i(z) \cdot (y-z) + \vb h_i(y) \cdot (y-z).
\end{equation}
In other words, the $\vb h_i(y) \cdot (y-z)$ is the remainder of the first order Taylor approximation.
Since the CQC is satisfied at $z$ we can apply \Cref{lem::difference_as_gradient}: there exists $\lambda = (\lambda_i \colon i \in I)$ with $y - z = \sum_{i\in I} \lambda_i \grad (-g_i)(z)$. Substituting we obtain $\forall i \in I$:
\[
  g_i (y) = - \sum_{j\in I} \lambda_j (\grad g_i(z) \cdot \grad g_j(z)) + \vb h_i(y) \cdot (y-z).
\]
We denote:
\begin{itemize}
  \item $\vb g_I (y) = (g_i(y) \colon i \in I)$;
  \item $\vb J_I(z) = \jac(\vb g_I)(z) = (\grad g_i(z))_{i\in I}$ the Jacobian matrix;
  \item $\vb N_I(z) = (\grad g_i(z) \cdot \grad g_j(z))_{i,j} = \vb J_I(z)^t \vb J_I(z)$ the Gram matrix of $\grad g_i(z)$; and 
  \item $\vb h(y) = (h_{i,j}(y))_{i,j}$.
\end{itemize}
With this notation we get:
\[
  \vb g_I(y) = - \vb N_I(z) \lambda + \vb h(y) (y-z).
\]
Since CQC hold at $z$, the $\grad g_i(z)$ are linearly independent and thus $\vb N_I(z)$ is invertible. Indeed, if $\vb N_I(z)$ is not invertible then there exists $0 \neq v \in \R^{\abs{I}}$ such that $\vb N_I(z) v = 0$. Therefore $0 = v^t \vb N_I(z) v = (\vb J_I(z) v)^t \vb J_I(z)v = \norm{\vb J_I(z)v}_2^2$. Hence $\vb J_I(z)v =0$, contradicting the linear independence of $\grad g_i(z) \colon i \in I$. Thus we can solve for $\lambda$:
\begin{equation}
  \label{eq::lambda}
  \lambda = - \vb N_I(z)^{-1} \vb g_I(y) + \vb N_I(z)^{-1} \vb h (y) (y-z).
\end{equation}
Recall from \Cref{lem::difference_as_gradient} that we have:
\[
  y - z = \sum_{i\in I} \lambda_i \grad (-g_i)(z) = -\vb J_I (z) \lambda.
\]
 Substituting $\lambda$ from \Cref{eq::lambda} we obtain:
\[
  y - z = \vb J_I (z) \vb N_I(z)^{-1} \vb g_I(y) - \vb J_I (z) \vb N_I(z)^{-1} \vb h (y) (y-z).
\]
Taking the norm we deduce that:
\begin{align*}
    \norm{y - z}_2 &= \norm{\vb J_I (z) \vb N_I(z)^{-1} \vb g_I(y) - \vb J_I (z) \vb N_I(z)^{-1} \vb h (y) (y-z)}_2 \\ &\le \norm{\vb J_I (z)}_2 \norm{\vb N_I(z)}_2^{-1} \norm{\vb g_I(y)}_2 + \norm{\vb J_I (z)}_2 \norm{\vb N_I(z)}_2^{-1} \norm{\vb h (y)}_2 \norm{y-z}_2
\end{align*}
where $\norm{\cdummy}_2$ denotes the 2-norm (resp. operator norm) of vectors (resp. matrices). Therefore:
\begin{equation}
  \label{eq::inequality}
  \big(1-\norm{\vb J_I (z)}_2 \norm{\vb N_I(z)}_2^{-1} \norm{\vb h (y)}_2\big)\norm{y-z}_2 \le \norm{\vb J_I (z)}_2 \norm{\vb N_I(z)}_2^{-1} \norm{\vb g_I(y)}_2
\end{equation}
Notice that:
\begin{itemize}
  \item $\norm{\vb g_I (y)}_2 = \sqrt{\sum_{i\in I} g_i(y)^2} \le \sqrt{\abs{I}}\, \max_{i\in I} \abs{g_i(y)} \le \sqrt{r}\, G(y)$, since \[G(y) = \abs{\min \{0,g_i(y) \colon i \in \{ 1,\dots , r\}\}} = \max \{ 0 , \abs{g_i(y)} \colon g_i(y) < 0 \}\]
  and $g_i(y) < 0$ for all $i \in I$ by hypothesis;
  \item $1-\norm{\vb J_I (z)}_2 \norm{\vb N_I(z)}_2^{-1} \norm{\vb h (y)}_2 \ge \frac{1}{2}$ if $y$ is close enough to $S$. Indeed $h_{i,j} \to 0$ when $y \to z$, i.e. when $\norm{y-z}_2 = \dist{y}{S} = D(y)$
  is going to zero. Thus we can choose $\epsilon'$ such that $D(y) \le
  \epsilon'$ implies $1-\norm{\vb J_I (z)}_2 \norm{\vb N_I(z)}_2^{-1} \norm{\vb h (y)}_2 \ge \frac{1}{2}$.
\end{itemize}
Then we deduce from \Cref{eq::inequality}:
\[
  D(y) = \norm{y-z}_2 \le 2 \sqrt{r} \norm{\vb J_I (z)}_2 \norm{\vb N_I(z)}_2^{-1} G(y)
\]
when $D(y) \le \epsilon'$, that proves the lemma with $\cst' = 2 \sqrt{r} \norm{\vb J_I (z)}_2 \norm{\vb N_I(z)}_2^{-1}$.
\end{proof}
We generalize the previous lemma, removing the assumption that all the active constraints are negative at $y$.
\begin{lemma}
\label{lem::L_equal_1_local}
  Let $D, G$ as in \Cref{def::loja} and assume that the CQC hold at $z \in S = \cS(\vb g)$. Then there exists $\epsilon'' = \epsilon''(z)>0$ and constant $\cst'' = \cst''(z)>0$ such that for all $y$ with:
  \begin{itemize}
    \item $D(y) = \dist{y}{S} = \norm{y-z}_2$;
    \item $D(y) \le \epsilon''$;
  \end{itemize}
  we have $D(y) \le \cst'' G(y)$.
\end{lemma}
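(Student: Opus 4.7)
The strategy is to refine the proof of \Cref{lem::L_equal_1_local_negative} by replacing the one step that uses the sign hypothesis $g_i(y) < 0$ for all active $i$. Let $I = \{i : g_i(z) = 0\}$ be the active set at $z$; by CQC, the gradients $\nabla g_i(z)$ for $i \in I$ are linearly independent, so $\vb N_I(z)$ is positive definite with smallest eigenvalue $\mu > 0$. As in the previous proof, \Cref{lem::difference_as_gradient} gives $y - z = -\vb J_I(z)\, \lambda$ with $\lambda \in \RR_{\ge 0}^{|I|}$, and the first-order Taylor expansion yields $\vb g_I(y) = -\vb N_I(z)\, \lambda + \vb h(y)(y-z)$ with $\|\vb h(y)\|_2 \to 0$ as $y \to z$.

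The new ingredient is to contract with $\lambda$ rather than to take the Euclidean norm of $\vb g_I(y)$. Since $\lambda^t \vb N_I(z) \lambda = \|\vb J_I(z)\lambda\|_2^2 = \|y-z\|_2^2$, the expansion above rearranges to the identity
\[
  \|y-z\|_2^2 = -\lambda^t \vb g_I(y) + \lambda^t \vb h(y)(y-z).
\]
Split $I = I^-(y) \sqcup (I \setminus I^-(y))$ with $I^-(y) = \{i \in I : g_i(y) < 0\}$. For $i \in I \setminus I^-(y)$, both $\lambda_i$ and $g_i(y)$ are $\ge 0$, so the summand $-\lambda_i g_i(y) \le 0$ can simply be discarded. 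For $i \in I^-(y)$, $|g_i(y)| \le G(y)$, so $-\lambda_i g_i(y) = \lambda_i |g_i(y)| \le \lambda_i G(y)$. Summing gives the key bound $-\lambda^t \vb g_I(y) \le G(y)\, \|\lambda\|_1$, which, crucially, does not require all $g_i(y)$ to share a sign.

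It remains to turn this into a bound on $\|y-z\|_2$. From $\|y-z\|_2^2 = \lambda^t \vb N_I(z)\, \lambda \ge \mu \|\lambda\|_2^2$ we get $\|\lambda\|_2 \le \|y-z\|_2/\sqrt{\mu}$, and $\|\lambda\|_1 \le \sqrt{|I|}\,\|\lambda\|_2$. Plugging back into the identity, together with $|\lambda^t \vb h(y)(y-z)| \le \|\vb h(y)\|_2\, \|\lambda\|_2\, \|y-z\|_2$, yields
\[
  \|y-z\|_2 \le \frac{\sqrt{|I|}}{\sqrt{\mu}}\, G(y) + \frac{\|\vb h(y)\|_2}{\sqrt{\mu}}\, \|y-z\|_2.
\]
Choose $\epsilon'' = \epsilon''(z)$ small enough that $\|\vb h(y)\|_2 \le \tfrac{1}{2}\sqrt{\mu}$ whenever $\|y-z\|_2 \le \epsilon''$, which is possible since $\vb h(y) \to 0$ as $y \to z$. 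Absorbing the second term on the right yields $D(y) \le \cst''\, G(y)$ with $\cst'' = 2\sqrt{|I|}/\sqrt{\mu} \le 2\sqrt{r}/\sqrt{\lambda_{\min}(\vb N_I(z))}$.

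The main obstacle to a direct generalization of \Cref{lem::L_equal_1_local_negative} is that as soon as $g_i(y) > 0$ for some $i \in I$, the quantity $|g_i(y)|$ is no longer controlled by $G(y)$, and the norm estimate $\|\vb g_I(y)\|_2 \le \sqrt{|I|}\, G(y)$ from the previous proof breaks down. Pairing with $\lambda \ge 0$ turns this into pointwise summands $\lambda_i g_i(y) \ge 0$ which can be \emph{discarded} rather than estimated; this sign argument is precisely what lets the proof go through without the uniform-sign hypothesis on $g_i(y)$.
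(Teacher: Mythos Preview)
Your proof is correct and takes a genuinely different, more direct route than the paper. The paper reduces to \Cref{lem::L_equal_1_local_negative} by a geometric argument: it splits $I$ into $I_-$ and $I_+$, introduces the auxiliary set $S_- = \cS(g_i : i \in I_-)$ and its tangent space $T_z S_-$, and compares $D(y)$ to $D_-(y) = \dist{y}{S_-}$ via a lower bound on the angle $\phi$ between $y-z$ and $T_z S_-$, together with a chain of projections $z', z'', z'''$; only then does it invoke the previous lemma for $S_-$ and use $G(y) = G_-(y)$. Your argument bypasses all of this by contracting the Taylor identity with the KKT multiplier vector $\lambda$: since $\lambda_i \ge 0$, each term with $g_i(y) \ge 0$ contributes with the favorable sign and can be dropped, so no case analysis on $I_\pm$ and no auxiliary distance $D_-$ are needed. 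This yields a shorter, purely algebraic proof and an explicit constant $\cst'' = 2\sqrt{|I|}/\sqrt{\lambda_{\min}(\vb N_I(z))}$ that depends only on $I$ and not on the partition $I_+ \cup I_-$, which is a cleaner dependence than what the paper obtains. The paper's approach, on the other hand, has the virtue of making the geometric mechanism (the angle $\phi$ bounded away from zero) visible. One minor point: your invocation of \Cref{lem::difference_as_gradient} tacitly assumes $y \notin S$; when $y = z$ both sides vanish and the inequality is trivial, so this is harmless.
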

\begin{figure}
\begin{center}
\begin{overpic}[width=7cm,grid=false,tics=10]{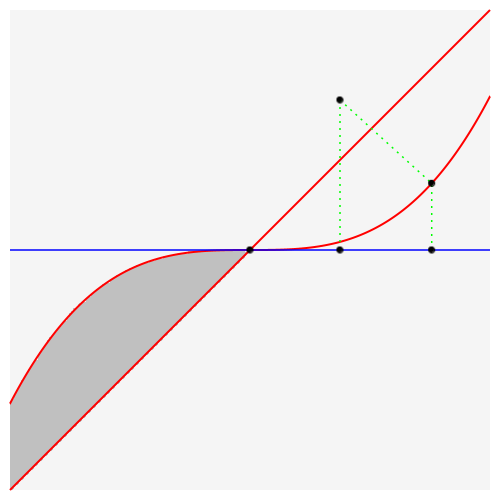}
\put (50,45) {$z$}
\put (67,84) {$y$}
\put (67,45) {$z'$}
\put (85,67) {$z''$}
\put (85,45) {$z'''$}
\put (20,35) {$S(\vb g)$}
\put (4.5,30) {$g_1 = 0$}
\put (15,19) {$g_2 = 0$}
\put (55, 52) {$\phi$}
\end{overpic}
\end{center}
\caption{Proof of \Cref{lem::L_equal_1_local}}
\label{fig::acute_angle}
\end{figure}
\begin{proof}
Let $y$ and $z$ be as in the hypothesis and let $g_i \colon i \in I$ be the active constraints at $z$. Notice that if $y = z \in S$ then $D(y) = G(y) =0$ and there is nothing to prove. So we assume that $y \not\in S$: there exists $i \in \{1,\dots, r \}$ s.t. $g_i(y) < 0$. Moreover, from \Cref{lem::L_equal_1_local_negative} we only need to consider the case where there exists $i \in I$ such that $g_i(y) \ge 0$.

So let $I_+ = I_+(y) =\{ i \in I \colon g_i(y) \ge 0 \}$ and $I_- = I_-(y) =\{ i \in I \colon g_i(y) < 0 \}$. Notice that $I_-$ and $I_+$ depend on $y$, but to obtain a result independent from $I_-$ and $I_+$ it is enough to take the minimum $\epsilon''$ and the maximum $\cst''$ as $I_-$ and $I_+$ vary.

 If we consider the Taylor expansion of $g_i$ at $z$, we obtain:
\begin{equation*}
  i \in I_- \Rightarrow 0 > g_i(y) = \grad g_i(z) \cdot (y-z) + \vb h_i(y) \cdot (y-z),
\end{equation*}
with the same notation as in \Cref{eq::taylor_exp}. This implies that there exists $\delta > 0$ such that $\grad (- g_i)(z) \cdot (y-z) \ge \delta$, for all $i \in I_-$, when $y$ is close enough to $z$.

We want to reduce to the case of only negative inequalities. We define:
\begin{itemize}
  \item $G_-(y) = \abs{\min \{0,g_i(y) \colon i \in I_-\}}$;
  \item $S_- = \cS(g_i \colon i \in I_-)$;
  \item $D_-(y) = \dist{y}{S_-}$;
  \item $T_z S_-$ the (affine) tangent space of $S_-$ at $z$.
\end{itemize}
Notice that, since the gradients are linearly independent, $T_z S_-$ is the affine subspace passing through $z$ and orthogonal to $\grad (- g_i)(z)$ for $i \in I_-$. In particular, since $\grad (- g_i)(z) \cdot (y-z) \ge \delta$, $y-z \notin T_z S_-$ the angle between $y-z$ and $T_z S_-$ is lower bounded by a strictly positive angle $\phi > 0$ for all $y$ close enough to $z$.

For a geometric intuition of the following discussion, see \Cref{fig::acute_angle}. Let $z'$ be the projection of $y$ on $T_z S_-$. By definition of $\phi$ we have $\norm{y - z}_2 \le \frac{\norm{y-z'}_2}{\sin \phi}$. Now let $z''$ be the projection of $y$ on $S_-$. Since $y$ is close to $z$, $z''$ is close to $z'$, i.e. the projection of $y$ on $S_-$ is close to the projection of $y$ on  $T_z S_-$. Thus there exists a constant $c$ such that $\norm{y - z}_2 \le c \frac{\norm{y-z''}_2}{\sin \phi}$.
More precisely, let $z'''$ be the projection of $z''$ on $T_z S_-$.
Thus $z''-z = (z''-z''') + (z'''-z)$, and since we project $z''$ on $T_z S_-$ we have:
\begin{itemize}
  \item $z''-z''' = \sum_{i \in I_-}\gamma_i \grad g_i(z) = \vb J_{I_-}(z)\gamma$ for some $\gamma = (\gamma_i \colon i \in I_-)$;
  \item $z'''-z$ is orthogonal to $\grad g_i(z)$ for $i \in I_-$.
\end{itemize}
 By definition of $z'$ we have $\norm{y-z'}_2 \le \norm{y - z'''}_2 \le \norm{y-z''}_2 + \norm{z'' - z'''}_2$. We show now that $\norm{z'' - z'''}_2$ is small compared to $\norm{y-z}_2$.
Expanding at $z$ for $i \in I_-$ we obtain:
\[
  0 = g_i(z'') = \grad g_i (z) \cdot (z'' - z) + \vb h_i (z'') \cdot (z'' - z) = \grad g_i (z) \cdot (z'' - z''') + \vb h_i (z'') \cdot (z'' - z).
\]
Proceeding as in \Cref{eq::lambda}, we have $\gamma = \vb N_{I_-}(z)^{-1}\vb h (z'') (z''-z)$. Now, since $z''$ is the projection of $y$ on $S_-$ and $z \in S_-$ we have $\norm{z'' - z}_2 \le 2 \norm{y - z}_2$. Thus we deduce:
\begin{align*}
  \norm{y-z'}_2 & \le \norm{y-z''}_2 + \norm{z'' - z'''}_2 \\
                & \le \norm{y-z''}_2 + \norm{\vb J_{I_-}(z)}_2 \norm{\gamma}_2 \\
                & \le \norm{y-z''}_2 + \norm{\vb J_{I_-}(z)}_2 \norm{\vb N_{I_-}(z)^{-1}\vb h (z'') (z''-z)}_2 \\
                & \le \norm{y-z''}_2 +2 \norm{\vb J_{I_-}(z)}_2 \norm{\vb N_{I_-}(z)^{-1}}_2 \norm{\vb h (z'')}_2 \norm{y - z}_2.
\end{align*}
Therefore
\[
  \norm{y - z}_2 \le \frac{\norm{y-z'}_2}{\sin \phi} \le \frac{\norm{y-z''}_2}{\sin \phi} + \frac{2 \norm{\vb J_{I_-}(z)}_2 \norm{\vb N_{I_-}(z)^{-1}}_2 \norm{\vb h (z'')}_2 \norm{y - z}_2}{\sin \phi},
\]
and finally
\[
\big( 1 - \frac{2 \norm{\vb J_{I_-}(z)}_2 \norm{\vb N_{I_-}(z)^{-1}}_2 \norm{\vb h (z'')}_2 }{\sin \phi}\big) \norm{y - z}_2 \le \frac{\norm{y-z''}_2}{\sin \phi}.
\]
As $z'' \to z$ if $y \to z$, $\norm{\vb h (z'')}_2 \to 0$ for $y \to
z$. Then there exists $\epsilon''>0$ such that $D(y) \le \epsilon''$
implies $1 - \frac{2 \norm{\vb J_{I_-}(z)}_2 \norm{\vb N_{I_-}(z)^{-1}}_2 \norm{\vb h (z'')}_2 }{\sin \phi} \ge \frac{1}{2}$
and thus
\begin{equation}
  \label{eq::sin_phi}
  \norm{y - z}_2 \le 2 \frac{\norm{y-z''}_2}{\sin \phi}.
\end{equation}

In other words, we just proved in \Cref{eq::sin_phi} that
\[
  D(y) \le \epsilon'' \Rightarrow D(y) \le \frac{2}{\sin \phi} D_-(y).
\]
Since $D_-$ is the distance function to $S_-$, that is defined by
inequalities negative at $y$, we can apply
\Cref{lem::L_equal_1_local_negative}: there exists $\cst'$ such that
if $\epsilon''$ is small enough, $D_-(y) \le \epsilon''$ implies $D_-(y) \le \cst' G_-(y)$ (notice that this is possible because $D(y) \to 0$ implies $D_-(y) \to 0$). Moreover observe that $G(y) = G_-(y)$ since only the $g_i$ that are negative at $y$ contribute to $G(y)$. Then, if we set $\cst'' = \frac{2\cst'}{\sin \phi}$ we can conclude:
\[
  D(y) \le \epsilon'' \Rightarrow D(y) \le \cst'' G(y).
\]

\end{proof}
We can now show that if the CQC hold at every point of the semialgebraic set the Łojasiewicz exponent is equal to $1$.
\begin{theorem}
  \label{thm::L_equal_1}
  Let $D, G$ as in \Cref{def::loja} and assume that the CQC holds at every point of $S = \cS(\vb g) \subset \ubox$. Then there exists a constant $\cst \in \RR_{>0}$ such that:
  \[
    D(y) \le \cst G(y)
  \]
  for all $y \in \ubox$.
\end{theorem}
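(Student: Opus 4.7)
The plan is to split $\ubox$ into a neighbourhood of $S$ and its complement, prove the inequality $D\le \cst\, G$ on each region separately, and combine the two constants.

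On the complement the argument is pure compactness. Fix $\eta>0$ and set $K_\eta \assign \{y\in\ubox : D(y) \ge \eta\}$. Since $K_\eta$ is compact and disjoint from $S = G^{-1}(0)$, the continuous function $G$ attains a positive minimum $m_\eta > 0$ on $K_\eta$; combined with the trivial bound $D(y)\le 2\sqrt{n}$ on $\ubox$, this gives $D(y)\le (2\sqrt{n}/m_\eta)\, G(y)$ on $K_\eta$, without invoking CQC.

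Near $S$ the goal is to find $\eta>0$ and $\cst_{1}>0$ such that $D(y)\le \cst_{1}\, G(y)$ whenever $D(y) < \eta$, via a finite-covering argument. For each $z_0 \in S$, \Cref{lem::L_equal_1_local} (applied using CQC at $z_0$) supplies constants $\epsilon''(z_0), \cst''(z_0) > 0$ that control $D$ by $G$ for those $y$ whose closest point of $S$ is precisely $z_0$. I would upgrade this pointwise statement to a locally uniform one: for each $z_0\in S$ there exist a neighbourhood $U(z_0)$ of $z_0$ in $\ubox$ and constants $\tilde{\eta}(z_0), \tilde{\cst}(z_0)>0$ such that $D(y)\le \tilde{\cst}(z_0)\, G(y)$ for every $y\in U(z_0)$ with $D(y)\le \tilde{\eta}(z_0)$. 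Proving this uniformization is the heart of the argument: it requires revisiting the proof of \Cref{lem::L_equal_1_local} and checking that its data -- the Jacobian $\vb J_I(z)$ and the inverse Gram matrix $\vb N_I(z)^{-1}$ evaluated at the closest point $z$ of $y$, the transverse angle $\sin \phi$, and the Taylor remainder norms -- depend on $z$ in a locally semi-continuous fashion. The structural input is that CQC at $z_0$ forces the gradients of all constraints active at $z_0$ to be linearly independent, which (together with a uniform lower bound on the singular values of $\vb N_I$) persists on a small neighbourhood of $z_0$, while the active set at any $z$ near $z_0$ is a subset of the active set at $z_0$.

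Once the local uniform statement is available, compactness of $S$ yields a finite subcover $U(z_{0,1}),\ldots, U(z_{0,k})$. Setting $V \assign \bigcup_i U(z_{0,i})$ and $\cst_1 \assign \max_i \tilde{\cst}(z_{0,i})$, and choosing $\eta>0$ small enough that $\{y\in\ubox : D(y)<\eta\}\subseteq V$ (possible since $\ubox\setminus V$ is compact and disjoint from $S$, so $D$ is bounded below by a positive constant there), gives $D\le \cst_1\, G$ on a neighbourhood of $S$. Combining the two regions with $\cst \assign \max(\cst_1, 2\sqrt{n}/m_\eta)$ finishes the proof. The main obstacle will be exactly the uniformization step: the combinatorial bookkeeping of \Cref{lem::L_equal_1_local} -- in particular the $I_+/I_-$ split -- has to be made compatible with a varying closest point $z$, and this is where the most care must be exercised.
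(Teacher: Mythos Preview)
Your decomposition into a neighbourhood of $S$ and its complement, together with the compactness argument on the complement (including the bound $D\le 2\sqrt n$ and the constant $2\sqrt n/m_{\eta}$), matches the paper exactly.

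The difference is in how uniformity near $S$ is obtained. You propose to upgrade \Cref{lem::L_equal_1_local} to a version valid on a whole \emph{neighbourhood} $U(z_{0})$ of each $z_{0}$, and then take a finite subcover; this forces you to track what happens when the closest point $z$ of $y$ drifts away from the fixed centre $z_{0}$, which is precisely the $I_{+}/I_{-}$ bookkeeping you flag as the main obstacle. The paper sidesteps this: since \Cref{lem::L_equal_1_local} is already stated at the \emph{actual} closest point $z$ of $y$, one does not need uniformity in $y$ around a fixed centre, only uniformity of the constants $\epsilon''(z)$ and $\cst''(z)$ as $z$ ranges over $S$. The paper therefore sets $\epsilon=\min_{z\in S}\epsilon''(z)>0$ and $\cst'=\max_{z\in S}\cst''(z)<\infty$, justified by the claim that $\epsilon''(\cdot)$ and $\cst''(\cdot)$ are respectively lower and upper semicontinuous on the compact set $S$. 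For any $y$ with $D(y)<\epsilon$ one then applies \Cref{lem::L_equal_1_local} at its own closest point $z$, and the finite-cover step never arises.

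Your route is correct but heavier. Note, though, that the paper's shortcut is not free: the semicontinuity assertion ultimately rests on the very observations you list (continuity of $\vb J_{I}$, invertibility and continuity of $\vb N_{I}^{-1}$, stability of the active set near $z_{0}$, lower bound on $\sin\phi$), so the analytic content is the same---the paper just packages it as a single semicontinuity claim rather than an explicit local uniformization.
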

\begin{proof}
  Let $\epsilon = \min_{z\in S} \epsilon''(z)$ and $\cst' = \max_{z \in S} \cst''(z)$, with $\epsilon''(z)$ and $\cst''(z)$ as in \Cref{lem::L_equal_1_local}. Notice that $\epsilon>0$ and $\cst' < \infty$ exist because $S$ is compact and $0<\epsilon''(z)$, $\cst''(z)< \infty$
  are respectively lower and upper semicontinuous functions of $z$. Let $U = \{ y \in \ubox \mid \dist{y}{S} <\epsilon \} \subset \ubox$
  (an open set containing $S$): by definition of $\epsilon$ and
  $\cst'$ we have $D(y) \le \cst' G(y)$ for all $y \in U$ from
  \Cref{lem::L_equal_1_local}.

  Now consider the compact set $C = \ubox \setminus U$ and let $G^* >0$ be the minimum of $G$ on $C$. Moreover since $S \subset \ubox$ we have $D(y) \le 2\sqrt{n}$ for $y \in \ubox$. Then:
  \[
    D(y) \le 2\sqrt{n} = \frac{2\sqrt{n}}{G^*}G^* \le \frac{2\sqrt{n}}{G^*} G(y)
  \]
  for all $y \in C$.

  Finally, taking $\cst = \max (\cst', \frac{2\sqrt{n}}{G^*})$ we obtain:
  \[
    D(y) \le \cst G(y)
  \]
  for all $y \in \ubox$.
\end{proof}
\begin{remark}
  In \Cref{thm::L_equal_1} we prove that in regular cases the {\L}ojasiewicz  exponent is 1. On the other hand we don't give a precise estimate for the constant $\cst$, even if we can revisit the proof of \Cref{lem::L_equal_1_local_negative}, \Cref{lem::L_equal_1_local} and \Cref{thm::L_equal_1} to bound it in terms of the following parameters:
  \begin{itemize}
    \item the max norm of the Jacobian of the $\vb g$: we could bound this parameter bounding the norm of $\vb g$;
    \item the min norm of the Gram matrix of the $\grad \vb g$: this measures how close are the gradients to be linearly dependend;
    \item the minimum of $G(y)$ on the complementary in $\ubox$ of a small neighbourhood of $S$: this measures how close are the $\vb g$ to have a common zero outside of $S$;
    \item the convergence rate to $0$ of the Taylor remainder $\vb h (z)$.
  \end{itemize}
  A detailed analysis of these parameters would also give an upper bound for $\cst$, but we don't develop it for the sake of simplicity.
\end{remark}

\begin{remark}
  On the contrary when the problem is not regular the bounds on the exponent $\textit{\L}$ can be large. We have:
  \[
    \textit{\L} \le d(\vb g)(6d(\vb g) - 3)^{n+r-1}
  \]
  see \cite[sec.~3.1]{kurdyka_convexifying_2015} and \cite{kurdyka_metric_2016}.
\end{remark}

\subsection{Construction of a polynomial echelon function}
\label{subsec::echelon}
In this section, we describe the polynomial echelon function $h_{k,m}$
used to perturb $f$.
This echelon polynomial depends on a parameter $\delta\in \R_{>0}$
controlling the width
of the step (and defined in \Cref{subsec::delta}) and on a parameter $k\in
\R_{>0}$ controlling the minimum of the function.
To show that the degree and the norm of the perturbation polynomial depend
polynomially on $\epsilon(f)$ (in \Cref{subsec::bounds}), we are going
to bound the degree of the echelon polynomials in terms of $\delta$
and $k$.

Consider the following function:
\begin{equation}
\label{eq::spline_echelon}
H(t)=
  \begin{cases}
    1 & t \in [-1, -\delta]\\
    -\frac{9(k - 1)}{2 d^3 k}t^3 - \frac{27(k - 1)}{2 d^2 k} t^2 - \frac{27 (k - 1)}{2 d k} t - \frac{7 k - 9}{2 k} & t \in [-\delta, -\delta +\frac{\delta}{3}]\\
    \frac{9(k - 1)}{ d^3 k}t^3 + \frac{27(k - 1)}{2 d^2 k} t^2 + \frac{9 (k - 1)}{2 d k} t + \frac{k + 1}{2 k} & t \in [-\delta +\frac{\delta}{3}, -\delta +\frac{2\delta}{3}]\\
    -\frac{9 (k - 1)}{2 d^3 k}t^3 + \frac{1}{k} & t \in [-\delta +\frac{2\delta}{3}, 0]\\
    \frac{1}{k} & t \in [0,1]
  \end{cases}
\end{equation}
The piecewise polynomial function $H(t)$ is a C$^{2}$ cubic spline on $[-1,1]$. Indeed an explicit computation shows that the functions  $H,H^{(1)},H^{(2)}$ are
absolutely continuous, and moreover the piecewise constant function $H^{(3)}$
is of total variation $V = \frac{216(k-1)}{\delta^3 k}$. Finally notice that $H$ is non-increasing on $[-1, 1]$.


We approximate this function by a polynomial $\in \RRT$, using
Chebyshev approximation (see \Cref{fig::poly_echelon}):
\begin{theorem}[{Chebyshev approximation on $[-1, 1]$ \cite{trefethen_approximation_2013}}]
\label{thm:chebyshev}
  For an integer $u$, let $h: [-1,1]\rightarrow \R$ be a function such that its derivatives through
  $h^{(u-1)}$ be absolutely continuous on $[-1, 1]$ and its $u$-th derivative $h^{(u)}$ is of bounded variation $V$. Then its Chebyshev approximation $p_m$ of degree $m$ satisfies:
\begin{equation*}
    \norm{h - p_m} \le \frac{4V}{\pi u (m - u)^u}.
\end{equation*}
\end{theorem}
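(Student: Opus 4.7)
I would follow the classical Chebyshev-series argument of Trefethen, broken into three ingredients: expand $h$ in Chebyshev polynomials, bound each coefficient via integration by parts, and sum the tail by a telescoping identity. Let $T_k$ denote the Chebyshev polynomial of the first kind and write $h(t) = \sum_{k\ge 0} a_k T_k(t)$, with coefficients
\begin{equation*}
a_k = \frac{2}{\pi}\int_{-1}^{1}\frac{h(t)\,T_k(t)}{\sqrt{1-t^2}}\, dt = \frac{2}{\pi}\int_0^{\pi} h(\cos\theta)\cos(k\theta)\, d\theta
\end{equation*}
(with the usual halving for $k=0$), after the substitution $t=\cos\theta$. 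Take $p_m$ to be the Chebyshev projection of degree $m$ (up to a controlled aliasing correction, the Chebyshev interpolant works the same way). Since $|T_k(t)|\le 1$ on $[-1,1]$, $\norm{h-p_m}\le \sum_{k>m}|a_k|$, so the task reduces to controlling $|a_k|$ for $k>u$.

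Next I would bound $|a_k|$ by $u+1$ successive integrations by parts in $\theta$ (equivalently in $t$, using $T_k'(t)=kU_{k-1}(t)$). Absolute continuity of $h,h',\ldots,h^{(u-1)}$ legitimises the first $u$ steps, each of which transfers a derivative onto an antiderivative of $\cos(k\theta)$ and gains a factor of size $1/(k-j)$; the alternation between $\cos(k\theta)$ and $\sin(k\theta)$ (which vanish at $0$ and $\pi$ in the appropriate combinations) makes the boundary terms vanish. Since $h^{(u)}$ is only of bounded variation $V$, the last step is carried out in the Riemann--Stieltjes sense against the signed measure $dh^{(u)}$ of total mass $V$, producing the $V$ factor. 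Tracking the constants yields
\begin{equation*}
|a_k| \le \frac{2V}{\pi\, k(k-1)(k-2)\cdots(k-u)} \qquad (k>u).
\end{equation*}

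I would then sum the tail by telescoping. The partial-fraction identity
\begin{equation*}
\frac{1}{k(k-1)\cdots(k-u)} = \frac{1}{u}\left(\frac{1}{(k-1)(k-2)\cdots(k-u)} - \frac{1}{k(k-1)\cdots(k-u+1)}\right)
\end{equation*}
collapses $\sum_{k>m}\frac{1}{k(k-1)\cdots(k-u)}$ to $\frac{1}{u\, m(m-1)\cdots(m-u+1)}$, which is bounded by $\frac{1}{u(m-u)^u}$ because the denominator contains $u$ factors each $\ge m-u+1>m-u$. Combining with the coefficient bound gives $\norm{h-p_m}\le \frac{2V}{\pi u(m-u)^u}$, matching the claimed inequality up to a factor of $2$ that appears when $p_m$ is taken as the Chebyshev \emph{interpolant} rather than the projection (the aliasing correction contributes an extra copy of the same tail).

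The main obstacle is the last integration by parts: because $h^{(u)}$ is only of bounded variation, the step must be interpreted as a Riemann--Stieltjes pairing, and one has to verify that the iterated antiderivative of $\cos(k\theta)$ used in that pairing has $L^{\infty}$-norm of order $1/\bigl(k(k-1)\cdots(k-u)\bigr)$, with all boundary terms vanishing or being dominated by the next integral. Once this combinatorial bookkeeping is done, the setup and the telescoping summation are routine.
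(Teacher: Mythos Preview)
The paper does not prove this theorem at all: it is quoted verbatim as a result from \cite{trefethen_approximation_2013} and used as a black box in the proof of \Cref{prop::echelon}. Your sketch is precisely the classical argument from Trefethen's book (Chebyshev expansion, repeated integration by parts to bound $|a_k|$, telescoping of the tail), so it is correct and in fact reproduces the proof of the very reference the paper cites; there is nothing to compare against on the paper's side.
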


\begin{proposition}
\label{prop::echelon}
There exists a univariate polynomial $h_{k,m} \in \RRT$ such that:
\begin{itemize}
  \item $\deg h_{k,m} = m$ with $m=\big\lceil\frac{6}{\delta}\sqrt[3]{\frac{4(k-1)}{3\pi}}+3\big\rceil$;
  \item for $t \in [-1, -\delta]$ we have $1 - \frac{1}{k} \le h_{k,m}(t) \le 1 + \frac{1}{k}$;
  \item for $t \in [0, 1]$ we have $h_{k,m}(t) \le \frac{2}{k}$;
  \item for $t \in [-1, 1]$ we have $0 \le h_{k,m}(t) \le 1 + \frac{1}{k}$.
  \end{itemize}
\end{proposition}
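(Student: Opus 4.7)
The plan is to define $h_{k,m}$ as the degree-$m$ Chebyshev approximation of the spline $H$ from \eqref{eq::spline_echelon} on $[-1,1]$ and to invoke \Cref{thm:chebyshev} with smoothness parameter $u=3$. First I would verify directly from the piecewise definition that $H$, $H^{(1)}$ and $H^{(2)}$ are absolutely continuous on $[-1,1]$: it suffices to check that $H$ and its first two derivatives agree with the constants $1$, $0$, $0$ at $t=-\delta$, agree with $1/k$, $0$, $0$ at $t=0$, and match across the two internal knots $-\delta+\delta/3$ and $-\delta+2\delta/3$. Differentiating each cubic piece three times gives the piecewise constant function $H^{(3)}$ taking values $0,\ -\tfrac{27(k-1)}{\delta^{3}k},\ \tfrac{54(k-1)}{\delta^{3}k},\ -\tfrac{27(k-1)}{\delta^{3}k},\ 0$, whose four jumps sum to $(27+81+81+27)\tfrac{k-1}{\delta^{3}k}=V=\tfrac{216(k-1)}{\delta^{3}k}$, yielding the total variation already recorded in the text.

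Next I would apply \Cref{thm:chebyshev} with $u=3$ to get $\|H-h_{k,m}\|_\infty\le \tfrac{4V}{3\pi(m-3)^{3}}$, and then choose $m$ so that this error is bounded by $1/k$. The inequality
\[
\frac{4V}{3\pi(m-3)^{3}}\le\frac{1}{k}
\]
is equivalent to $(m-3)^{3}\ge \tfrac{4\cdot 216(k-1)}{3\pi\delta^{3}}$, and since $216=6^{3}$ this reduces to $m-3\ge\tfrac{6}{\delta}\sqrt[3]{\tfrac{4(k-1)}{3\pi}}$. Taking the ceiling gives precisely the degree formula in the statement.

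Finally I would deduce the pointwise bounds from the uniform estimate $|h_{k,m}(t)-H(t)|\le 1/k$ on $[-1,1]$. On $[-1,-\delta]$ we have $H\equiv 1$, so $1-\tfrac{1}{k}\le h_{k,m}\le 1+\tfrac{1}{k}$; on $[0,1]$ we have $H\equiv \tfrac{1}{k}$, so $0\le h_{k,m}\le \tfrac{2}{k}$; and since an explicit sign check of $H^{(1)}$ on each piece shows $H$ is non-increasing from $1$ to $1/k$ on $[-1,1]$, the global bounds $0\le h_{k,m}\le 1+\tfrac{1}{k}$ follow from $H(t)-\tfrac{1}{k}\ge 0$ and $H(t)+\tfrac{1}{k}\le 1+\tfrac{1}{k}$. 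The only genuinely delicate step is the $C^{2}$ gluing of the spline and the bookkeeping for the total variation of $H^{(3)}$; everything else is either algebra on the exponent of $\delta$ or an immediate consequence of monotonicity of $H$.
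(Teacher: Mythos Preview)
Your proposal is correct and follows essentially the same approach as the paper: take $h_{k,m}$ to be the degree-$m$ Chebyshev approximant of the spline $H$, apply \Cref{thm:chebyshev} with $u=3$ and the stated total variation $V=\tfrac{216(k-1)}{\delta^{3}k}$ to force $\|H-h_{k,m}\|\le 1/k$, solve for $m$, and read off the pointwise bounds from the values and monotonicity of $H$. You have simply made explicit the $C^{2}$ gluing check and the jump computation for $H^{(3)}$ that the paper records as facts preceding the proposition.
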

\begin{proof}
We construct a degree $m$ Chebyshev approximation $h_{k,m}\in \RRT$  of $H$ such that
\begin{equation}
\label{eq::poly_echelon}
  \norm{H - h_{k,m}} \le {1\over k},
\end{equation}

so that the last three points of the proposition are satisfied.
As $H$, $H^{(1)}$ and $H^{(2)}$ are absolutely continuous and $H^{(3)}$ has total
variation $V= \frac{216(k-1)}{\delta^3 k}$, by \Cref{thm:chebyshev},
it suffices to take $m$ such $\frac{4V}{3\pi (m-3)^3} \le \frac{1}{k}$, i.e.
\[
  m \ge \sqrt[3]{\frac{4Vk}{3\pi}}+3 = \frac{6}{\delta}\sqrt[3]{\frac{4(k-1)}{3\pi}}+3,
\]
which proves the first point.

The other points follow from \Cref{eq::poly_echelon} and the definition of $H$ in \eqref{eq::spline_echelon}.
\end{proof}

\begin{figure}
\centerline{\includegraphics[width=7cm]{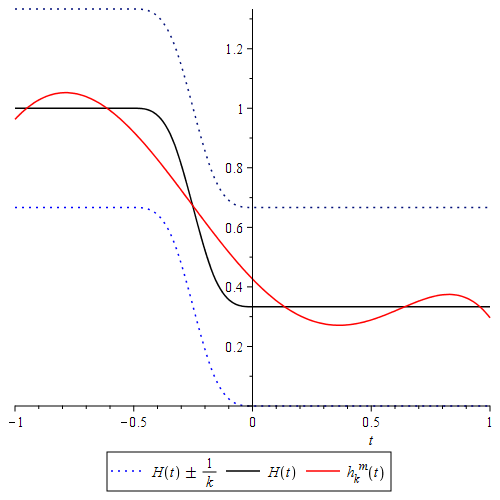}}
  \caption{Polynomial Echelon Function}
  \label{fig::poly_echelon}
\end{figure}
\section{Effective Putinar's Positivstellensatz}
\label{sec::polynomial_putinar}
This section is devoted to the proof of \Cref{thm::polynomial_putinar}.
\subsection{From S to $\ubox$}
\label{subsec::bounds}
Let $h_{k,m}$ be as in \Cref{prop::echelon}. We want to show that, for a suitable choice of $k$, $m$ and $s \in \RR_{>0}$, the polynomial:
\begin{equation}
  \label{eq::perturbation_poly}
  p = f - s \sum_{i=1}^r h_{k,m}(g_i)g_i
\end{equation}
is such that $p \ge \frac{f^*}{2}$ on $\ubox$.
\begin{remark}
Our construction of the perturbed polynomial $p$ is similar to the one
in \cite{schweighofer_optimization_2005}, \cite{nie_complexity_2007}, or \cite{averkov_constructive_2013}
where the polynomial $h$ is a univariate (sum of) squares. That choice is simpler, but it results in worst
  bounds for the degree and the norm of $s \sum_{i=1}^r
  h_{k,m}(g_i)g_i$, than the one we obtain using the polynomial echelon
  function $h_{k,m}$.

These univariate SoS coefficients have also been used in
  \cite{kurdyka_convexifying_2015}, to prove that one can uniformly
  approximate
  positive polynomials on compact sets, using the proper subcone of the quadratic module $\cQ(\vb g)$ where the SoS coefficient of $g_i$ is of the form $\sum_j (h_j(g_i))^2$, for $h_j$ univariate. They derive a Putinar's Positivstellensatz and apply it to Polynomial Optimization problems. We describe the equivalent of the uniform approximation result in \Cref{thm::quad_module_weirstrass}, with our coefficients $h_{k,m} \in \cQ(1+T, 1-T)$.
\end{remark}

\begin{proposition}
\label{prop::bounds}
  Assume that the normalisation assumptions \eqref{assum::norm} are satisfied. If
  \begin{equation}
  \label{eq::s}
    s > \frac{6 \norm{f}}{\delta};
  \end{equation}
  \begin{equation}
  \label{eq::k_1}
    k > \frac{2r-2}{ \delta}+1;
  \end{equation}
  \begin{equation}
  \label{eq::k_2}
    k > \frac{4rs}{f^*};
  \end{equation}
  then $p = f - s \sum_{i=1}^r h_{k,m}(g_i)g_i \ge \frac{f^*}{2}$ on $\ubox$.
\end{proposition}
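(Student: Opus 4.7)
The plan is to partition $\ubox$ into three regions and establish $p \ge f^{*}/2$ on each. Write $A = \{x \in \ubox : f(x) \le 3f^{*}/4\}$ as in \Cref{subsec::dist_AS} and split into: (I) $x \in S$; (II) $x \in \ubox \setminus S$ with $f(x) > 3f^{*}/4$; (III) $x \in A$ (automatically disjoint from $S$ since $f^{*} > 0$). The tools will be the four bullets of \Cref{prop::echelon}, the normalisation $\norm{g_{i}} \le 1/2$ from \eqref{assum::norm}, and the bound $G(x) \ge \delta$ on $A$ supplied by \Cref{lem::delta}.

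In regions (I) and (II) I expect the argument to be essentially the same, using only upper bounds on $s \sum_{i} h_{k,m}(g_{i}) g_{i}$. For $g_{i}(x) \ge 0$ the normalisation gives $g_{i}(x) \in [0,1/2]$, so by the third bullet of \Cref{prop::echelon} one has $h_{k,m}(g_{i}) g_{i} \le (2/k)(1/2) = 1/k$; for $g_{i}(x) < 0$, the fourth bullet gives $h_{k,m}(g_{i}) \ge 0$, so $h_{k,m}(g_{i}) g_{i} \le 0$. Summing and applying \eqref{eq::k_2} yields $s \sum_{i} h_{k,m}(g_{i}) g_{i} \le sr/k < f^{*}/4$. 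Coupled with $f(x) \ge f^{*}$ in (I) or $f(x) > 3f^{*}/4$ in (II), this gives $p(x) \ge f^{*}/2$.

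The substantive step is region (III). Here \Cref{lem::delta} supplies an index $j$ with $g_{j}(x) \le -\delta$, and the normalisation pins $g_{j}(x) \in [-1/2, -\delta]$. The second bullet of \Cref{prop::echelon} gives $h_{k,m}(g_{j}) \ge 1 - 1/k$, strictly positive since \eqref{eq::k_1} forces $k > 1$. Hence $h_{k,m}(g_{j}) g_{j} \le (1-1/k) g_{j} \le -(1-1/k)\delta$, the key strongly negative contribution. Bounding each of the other $r-1$ terms above by $1/k$ as in (I)--(II), I get
\[
  s \sum_{i=1}^{r} h_{k,m}(g_{i}) g_{i} \le -s(1-1/k)\delta + s(r-1)/k,
\]
so that $p(x) \ge -\norm{f} + s\bigl[(1-1/k)\delta - (r-1)/k\bigr]$, and it suffices to show that the bracketed quantity, times $s$, exceeds $f^{*}/2 + \norm{f}$.

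The main obstacle is interlocking the three hypotheses \eqref{eq::s}, \eqref{eq::k_1}, \eqref{eq::k_2} cleanly. I expect the chain to run as follows: from \eqref{eq::k_1} rewritten as $(k-1)\delta > 2(r-1)$, divide by $2k$ to get $(r-1)/k < (1-1/k)\delta/2$, hence $(1-1/k)\delta - (r-1)/k > (1-1/k)\delta/2$; from \eqref{eq::s} then $s(1-1/k)\delta/2 > 3\norm{f}(1-1/k)$. The normalisation $\norm{g_{i}} \le 1/2$ also forces $\delta \le 1/2$ (since $G \le 1/2$ on $\ubox$), so \eqref{eq::s} combined with \eqref{eq::k_2} forces $k$ large enough that $1-1/k \ge 1/2$. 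Plugging in, $s(1-1/k)\delta/2 \ge 3\norm{f}/2 \ge f^{*}/2 + \norm{f}$ (using $f^{*} \le \norm{f}$), closing the chain. Conceptually, \eqref{eq::s} provides the ``big push'' from the strongly negative echelon value at index $j$, \eqref{eq::k_1} caps the leakage from the other $r-1$ terms, and \eqref{eq::k_2} keeps the correction on $S$ itself small.
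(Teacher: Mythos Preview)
Your proof is correct and follows essentially the same route as the paper: split $\ubox$ into $A$ and its complement (the paper uses exactly these two pieces, your regions (I) and (II) being merely a refinement of $\ubox\setminus A$ with identical estimates), and on each piece bound $h_{k,m}(g_i)g_i$ using the same bullets of \Cref{prop::echelon} together with $\|g_i\|\le 1/2$. Your explicit treatment of the case $g_i(x)<0$ when bounding $h_{k,m}(g_i)g_i\le 1/k$, and your derivation of $1-1/k\ge 1/2$ from \eqref{eq::s} and \eqref{eq::k_2} rather than from \eqref{eq::k_1}, are in fact a bit more careful than the paper's own argument.
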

\begin{proof}
Let $x \in A$ so that $G(x) \ge \delta$, i.e. $\min\{g_{1}(x),\ldots,g_{r}(x),0\}\le -\delta$ (see \Cref{sec::ingredients}), and WLOG assume $g_1(x) \le -\delta$. Notice that from \Cref{prop::echelon} we have $h_{k,m}(g_1(x)) \ge 1 - \frac{1}{k}$
and, if $g_i(x) \ge 0$, $h_{k,m}(g_1(x)) \le \frac{2}{k}$. Moreover recall that $\norm{g_i} \le \frac{1}{2}$ from the normalisation assumptions \eqref{assum::norm}. Then:
\begin{align*}
  p(x) &= f(x) - s \sum_{i=1}^r h_{k,m}(g_i(x))g_i(x)  \\
       &\ge f(x) + s \delta (1 - \frac{1}{k}) - s \sum_{i=2}^r h_{k,m}(g_i(x))g_i(x) \\
       &\ge f(x) + s \delta (1 - \frac{1}{k}) - s \frac{r-1}{k}
        = f(x) + s \frac{\delta}{2} (1 - \frac{1}{k}) + s (\frac{\delta}{2} (1 - \frac{1}{k}) - \frac{r-1}{k}).
\end{align*}
From \Cref{eq::s} and \Cref{eq::k_1}, we have respectively
$f(x) + s \frac{\delta}{2} (1 - \frac{1}{k}) > \frac{\norm{f}}{2} \ge \frac{f^*}{2}$
and $\frac{\delta}{2} (1 - \frac{1}{k}) - \frac{r-1}{k} > 0$, so that
$p(x)> \frac{f^*}{2}$ for $x \in A$.

By \Cref{eq::k_2},  $\frac{3f^*}{4} - \frac{sr}{k} > \frac{f^*}{2}$.
By the normalization assumptions \eqref{assum::norm} and as $h_{k,m}$ is upper bounded by
$2\over k$ on $[0,1]$ (see \Cref{prop::echelon}), we therefore deduce that for $x \in \ubox \setminus A$
\begin{align*}
  p(x) &= f(x) - s \sum_{i=1}^r h_{k,m}(g_i(x))g_i(x)
         \ge \frac{3f^*}{4} - sr \frac{2}{k} \frac{1}{2}= \frac{3f^*}{4}
         - \frac{sr}{k}> \frac{f^*}{2}.
\end{align*}
This shows that $p(x) > \frac{f^*}{2}$ for $x \in \ubox = A \cup (\ubox \setminus A)$.
\end{proof}
\begin{proposition}
Let $p$ be as in \eqref{eq::perturbation_poly}, with \eqref{eq::s}, \eqref{eq::k_1}, \eqref{eq::k_2} and the normalization assumptions \eqref{assum::norm} satisfied. Let  $d(\vb g) = \max_i \deg g_i$. Then
  \begin{equation}
  \label{eq::norm}
    \norm{p} = O(\norm{f} 2^{3\textit{\L}} r \cst d(f)^{2\textit{\L}}\epsilon(f)^{-\textit{\L}}),
  \end{equation}
  \begin{equation}
  \label{eq::degree}
    \deg{p} = O(2^{4\textit{\L}} r^{\frac{1}{3}}\cst^{\frac{4}{3}} d(\vb g) d(f)^{\frac{8\textit{\L}}{3}}\epsilon(f)^{-\frac{4\textit{\L}+1}{3}}).
  \end{equation}
\end{proposition}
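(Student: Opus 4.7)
The plan is to pick $s$ and $k$ that barely satisfy \eqref{eq::s}, \eqref{eq::k_1}, \eqref{eq::k_2}, then substitute the explicit expression $\delta = \cst^{-1}(\epsilon(f)/(8d(f)^{2}))^{\textit{\L}}$ from \Cref{lem::delta}, and finally propagate the resulting values through the bound on $\deg h_{k,m}$ given by \Cref{prop::echelon}. Concretely, I would set $s = \Theta(\norm{f}/\delta)$ so as to saturate \eqref{eq::s} up to a constant factor. Since $\epsilon(f) = f^{*}/\norm{f}\le 1$, requirement \eqref{eq::k_2} reads $k = \Omega(rs/f^{*}) = \Omega(r/(\delta\,\epsilon(f)))$, which dominates \eqref{eq::k_1} (namely $k=\Omega(r/\delta)$); so taking $k = \Theta(r/(\delta\,\epsilon(f)))$ is natural.

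For the norm bound, the triangle inequality gives
\[
\norm{p} \;\le\; \norm{f} + s\sum_{i=1}^{r}\norm{h_{k,m}(g_i)\,g_i}.
\]
By the normalisation \eqref{assum::norm}, $\norm{g_i}\le 1/2$, so $g_i$ takes values in $[-1/2,1/2]\subset[-1,1]$ on $\ubox$. The last bullet of \Cref{prop::echelon} then gives $0\le h_{k,m}(g_i(x))\le 1+1/k\le 2$, so each summand is at most $2\cdot\tfrac{1}{2}=1$. Thus $\norm{p}\le\norm{f}+rs$, and substituting $\delta^{-1}= \cst\,8^{\textit{\L}}d(f)^{2\textit{\L}}\epsilon(f)^{-\textit{\L}} = \cst\,2^{3\textit{\L}}d(f)^{2\textit{\L}}\epsilon(f)^{-\textit{\L}}$ into $s$ and multiplying by $r$ immediately yields \eqref{eq::norm}.

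For the degree bound, one has $\deg p \le \max(d(f),\, (m+1)\,d(\vb g))$, so the dominant term is $m\cdot d(\vb g)$. From \Cref{prop::echelon}, $m = O(\delta^{-1}(k-1)^{1/3})$. Plugging in $k = \Theta(r/(\delta\,\epsilon(f)))$ gives $m = O(r^{1/3}\,\delta^{-4/3}\epsilon(f)^{-1/3})$, and substituting $\delta^{-4/3} = O(2^{4\textit{\L}}\,\cst^{4/3}\,d(f)^{8\textit{\L}/3}\,\epsilon(f)^{-4\textit{\L}/3})$ and multiplying by $d(\vb g)$ reproduces \eqref{eq::degree}.

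The computation is essentially arithmetic bookkeeping; the only subtlety is verifying that \eqref{eq::k_2} dominates \eqref{eq::k_1} (so as to avoid inflating exponents needlessly) and tracking the powers of $2$, $\cst$, $d(f)$ and $\epsilon(f)$ introduced by the $\textit{\L}$-th power in $\delta$. Once the dominant choices of $s$, $k$ are pinned down and $\delta$ is inserted from \Cref{lem::delta}, the stated $O(\cdot)$ estimates drop out by direct substitution.
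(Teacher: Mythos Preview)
Your argument is correct and follows essentially the same route as the paper: choose $s=\Theta(\norm{f}/\delta)$ and $k=\Theta(r/(\delta\,\epsilon(f)))$ (noting that \eqref{eq::k_2} dominates \eqref{eq::k_1}), bound $\norm{p}\le\norm{f}+rs$ via $\norm{g_i}\le\tfrac12$ and $h_{k,m}\le 1+\tfrac1k$, and bound $\deg p$ through $m=O(\delta^{-1}k^{1/3})$, then substitute $\delta=\cst^{-1}(\epsilon(f)/(8d(f)^2))^{\textit{\L}}$. The bookkeeping and the order in which the substitutions are made match the paper's proof almost line by line.
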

\begin{proof}
Let $d = d(f) = \deg f$. We start bounding $m$ in terms of $\epsilon(f)$.

We can choose $m=\big\lceil\frac{6}{\delta}\sqrt[3]{\frac{4(k-1)}{3\pi}}+3\big\rceil$
 from \Cref{prop::echelon}, thus it is enough to bound $k$ and $\delta$.

From \Cref{lem::delta} we can choose $\delta = \frac{1}{\cst}(\frac{\epsilon(f)}{8d^2})^{\textit{\L}} = \cst^{-1}\epsilon(f)^{\textit{\L}} 2^{-3\textit{\L}}d^{-2\textit{\L}}$ . From \Cref{eq::s} we deduce that:
\begin{equation}
  \label{eq::s_order}
  s = O(\frac{\norm{f}}{\delta}) = O(\norm{f} \cst 2^{3\textit{\L}}d^{2\textit{\L}}\epsilon(f)^{-\textit{\L}} ).
\end{equation}
 From \Cref{eq::k_1} we deduce that $k = O(\frac{r}{\delta})$, while from \Cref{eq::k_2} (together with \Cref{eq::s}) we deduce that $k = O(\frac{r}{\epsilon(f)\delta})$:
the latter has an higher order in terms of $\epsilon(f)$, and finally:
\begin{equation}
  \label{eq::k_order}
  k=O(\cst 2^{3\textit{\L}} r d^{2\textit{\L}} \epsilon(f)^{-(\textit{\L}+1)}).
\end{equation}
Now we plug \Cref{eq::k_order} in $m=\big\lceil\frac{6}{\delta}\sqrt[3]{\frac{4(k-1)}{3\pi}}+3\big\rceil$ and obtain:
\begin{equation}
  \label{eq::m_degree}
  m = O(\frac{k^{\frac{1}{3}}}{\delta}) = O((\cst^{\frac{1}{3}}r^{\frac{1}{3}}2^{\textit{\L}} d^{\frac{2{\textit{\L}}}{3}}\epsilon(f)^{-\frac{{\textit{\L}}+1}{3}})(\cst 2^{3{\textit{\L}}}d^{2{\textit{\L}}}\epsilon(f)^{-{\textit{\L}}})) = O (\cst^{\frac{4}{3}}r^{\frac{1}{3}}2^{4{\textit{\L}}}d^{\frac{8{\textit{\L}}}{3}}\epsilon(f)^{-\frac{4{\textit{\L}}+1}{3}}).
\end{equation}

By the normalization assumptions \eqref{assum::norm}, the properties
of $h_{k,m}$ (\Cref{prop::echelon}) and \Cref{eq::s_order} we obtain:
\begin{align*}
\norm{p} &\le \norm{f} + s \sum_{i=1}^r
\norm{h_{k,m}(g_i)g_i} \le \norm{f} + sr (1+ {1\over
                   k}){1 \over 2} \\
  &\le \norm{f} + sr  = O(\norm{f} + \norm{f} \cst r 2^{3{\textit{\L}}}d^{2{\textit{\L}}}\epsilon(f)^{-{\textit{\L}}})  \\
  & = O(\norm{f}\cst r 2^{3{\textit{\L}}}d^{2{\textit{\L}}}\epsilon(f)^{-{\textit{\L}}}).
\end{align*}

Similarly, using \Cref{eq::m_degree} we have:
\[
\deg(f-p)\le \max_i \{ \deg (h_{k,m}(g_{i}) g_{i}), i=1, \ldots, r\}
= O(d(\vb g) m + d(\vb g)) = O(2^{4{\textit{\L}}} r^{\frac{1}{3}}
\cst^{\frac{4}{3}} d(\vb g)d^{\frac{8{\textit{\L}}}{3}}\epsilon(f)^{-\frac{4{\textit{\L}}+1}{3}}),
\]
where $d(\vb g) = \max_i \deg g_i$.
\end{proof}

We now show that $f-p= s \sum_{i=1}^r h_{k,m}(g_i(x))g_i(x)$ is in
$Q_\ell(\vb g)$, giving degree bounds for the degree $\ell$ that is necessary to represent $f - p$ (see \Cref{prop::perturbed_poly}).
\begin{theorem}[Fekete -  Lukács , \cite{powers_polynomials_2000}]
\label{thm::markov-felete-lucas}
  Let $f \in \RRT_d$ be a univariate polynomial of degree $d$. If $f \ge 0 $ on $[-1, 1]$ then there exists $s_0, s_1, s_2 \in \Sigma^2$ such that $f = s_0 + s_1 (1-T) + s_2(1+T)$, where the degree of every addendum is $\le d+1$.
In other words, $\pos([-1, 1])_d \subset \cQ_{d+1}(1-T, 1+T)$.
\end{theorem}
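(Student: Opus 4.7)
The strategy is to first establish the classical Markov normal form for a polynomial $f \in \RRT_d$ non-negative on $[-1,1]$: if $d = 2m+1$ is odd, then $f = (1+T)A^2 + (1-T)B^2$ with $\deg A, \deg B \le m$; if $d = 2m$ is even, then $f = A^2 + (1-T^2)B^2$ with $\deg A \le m$ and $\deg B \le m-1$. Once this is in hand, the quadratic-module shape $f = s_0 + s_1(1-T) + s_2(1+T)$ with $s_i \in \Sigma^2$ and every summand of degree $\le d+1$ follows easily, as explained below.

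To prove the normal form, I would factor $f$ over $\R$ as
\[
f(T) = c \prod_i (T-\alpha_i)^{\mu_i} \prod_j \bigl((T-a_j)^2 + b_j^2\bigr)^{\nu_j}.
\]
Each irreducible quadratic factor is manifestly a sum of squares. Any real root $\alpha_i \in (-1,1)$ must have even multiplicity, since otherwise $f$ would change sign there, and such roots therefore contribute a global square. The remaining linear factors have roots in $\{\pm 1\} \cup ((-\infty,-1)\cup(1,\infty))$; for $\alpha > 1$ one has $\alpha - T = \tfrac{1}{2}\bigl((\alpha-1)(1+T) + (\alpha+1)(1-T)\bigr)$, a positive combination of $1-T$ and $1+T$, and symmetrically for $\alpha < -1$, while the boundary factors $1\pm T$ are already in the desired form. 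The sign condition $f \ge 0$ on $[-1,1]$ forces the number of negative-sign (boundary/exterior) factors to be even, and an induction pairing such factors two at a time, fitting the resulting degree-$2$ positive product into the Markov shape, assembles the full normal form with the claimed degree bounds.

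Finally, I convert the normal form to the Putinar shape. The odd case is immediate: take $s_0 = 0$, $s_1 = B^2$, $s_2 = A^2$, so each summand has degree $2m+1 = d$. For the even case I use the identity
\[
(1-T^2)B^2 = \tfrac{1}{2}\bigl((1+T)B\bigr)^2 (1-T) + \tfrac{1}{2}\bigl((1-T)B\bigr)^2 (1+T),
\]
which follows from $(1+T)^2(1-T) + (1-T)^2(1+T) = 2(1-T^2)$. This yields $s_0 = A^2$, $s_1 = \tfrac{1}{2}((1+T)B)^2$, $s_2 = \tfrac{1}{2}((1-T)B)^2$, all visibly sums of squares, with each of $s_0$, $s_1(1-T)$, $s_2(1+T)$ of degree at most $2m+1 = d+1$. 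The main obstacle is the bookkeeping in the induction for the Markov normal form: the pairing of boundary/exterior linear factors must be organised so that the degrees of $A$ and $B$ do not exceed their prescribed bounds at any stage of the induction.
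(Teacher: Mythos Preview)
Your proposal is correct and follows essentially the same route as the paper: both reduce to the classical Markov/Fekete normal form and then apply the identical identity $1-T^{2}=\tfrac{1}{2}\bigl((1+T)^{2}(1-T)+(1-T)^{2}(1+T)\bigr)$ to pass from the preordering to the quadratic module, with the degree going up by at most one. The only difference is that the paper simply cites the four-term representation $f=h_0^2+h_1^2(1-T)+h_2^2(1+T)+h_3^2(1-T^2)$ (each addendum of degree $\le d$) from \cite{powers_polynomials_2000} and \cite{polya_problems_1976}, whereas you sketch its proof via the real factorisation of $f$; your parity-split version is the sharper Markov form underlying the cited statement.
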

\begin{proof}
  From \cite{powers_polynomials_2000} (see also \cite[part~VI, 46--47]{polya_problems_1976}) there exists polynomials $h_i$ such that $f = h_0^2 + h_1^2(1-T) + h_2^2 (1+T) + h_3^2 (1-T^2)$, where the degree of every addendum is $\le d$. Now notice that $1-T^2 = \frac{1}{2}\big( (1+T)^2(1-T) + (1-T)^2(1+T)\big)$ to conclude.
\end{proof}
Our assumption is that $\cQ(1-\norm{\vb X}_2^{2}) \subset \cQ(\vb g)$, while we are trying to reduce to the case of $\ubox$. We show that we can move from the latter to the former with a constant  degree shift in \Cref{lem::box_to_ball}.
  \begin{lemma}
  \label{lem::box_to_ball}
    The preordering associated with the box $\ubox$ is included in the quadratic module of the unit ball. In particular $\cO_{d}(1 \pm X_i \colon i \in \{1, \dots ,n\}) \subset \cQ_{d+n} (1 - \norm{\vb X}_2^2)$.
  \end{lemma}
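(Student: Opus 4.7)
The plan is to show that every generator of the preordering $\cO(1\pm X_i \colon i)$ lies in the quadratic module $\cQ(1-\norm{\vb X}_2^2)$ with degree overhead at most $n$, and then to multiply in the SoS coefficients. Two base identities drive the argument. First, $1-X_i^2 = (1-\norm{\vb X}_2^2)+\sum_{j\ne i}X_j^2$, so $1-X_i^2\in \cQ_2(1-\norm{\vb X}_2^2)$. Second, for $\epsilon\in\{\pm 1\}$,
\[ 1+\epsilon X_i = \tfrac{1}{2}(1+\epsilon X_i)^2+\tfrac{1}{2}(1-X_i^2), \]
expressing each linear generator as a positive combination of a square and $1-X_i^2$.

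Using these identities, I would prove by induction on $|T|$ that for any $T\subseteq\{1,\dots,n\}$,
\[ \prod_{i\in T}(1-X_i^2)=\sigma_T+\tau_T(1-\norm{\vb X}_2^2), \]
with $\sigma_T,\tau_T\in\Sigma^2$ of total degree bounded by $2|T|$. The inductive step multiplies the representation for $T$ by $(1-X_j^2)=(1-\norm{\vb X}_2^2)+\sum_{\ell\ne j}X_\ell^2$ and collects terms: the new SoS summand comes from the products SoS$\cdot$SoS and from the square $(1-\norm{\vb X}_2^2)^2$, while the coefficient of $(1-\norm{\vb X}_2^2)$ remains SoS.

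A general generator of $\cO(1\pm X_i)$ has the form $G=\prod_{i\in I_1}(1+\epsilon_i X_i)\prod_{j\in I_2}(1-X_j^2)$ with $I_1,I_2\subseteq\{1,\dots,n\}$ disjoint and $\epsilon_i\in\{\pm 1\}$. Applying the second base identity to each factor of $I_1$ and expanding gives
\[ G=\frac{1}{2^{|I_1|}}\sum_{S\subseteq I_1}\prod_{i\in S}(1+\epsilon_i X_i)^2\prod_{i\in(I_1\setminus S)\cup I_2}(1-X_i^2). \]
Each summand is an SoS times a product of $(1-X_i^2)$'s, hence by the induction it lies in $\cQ(1-\norm{\vb X}_2^2)$ with degree at most $2|S|+2(|I_1\setminus S|+|I_2|)=2|I_1|+2|I_2|=\deg G+|I_1|\le \deg G+n$. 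Finally, for $p\in\cO_d(1\pm X_i)$ written as $\sigma_0+\sum_J\sigma_J G_J$ with $\deg(\sigma_J G_J)\le d$, multiplying the representation of each $G_J$ by $\sigma_J$ preserves membership in $\cQ(1-\norm{\vb X}_2^2)$ with total degree at most $(d-\deg G_J)+(\deg G_J+n)=d+n$, so summing yields $p\in\cQ_{d+n}(1-\norm{\vb X}_2^2)$. The only step with real content is the induction on $\prod_{i\in T}(1-X_i^2)$; the rest is just degree bookkeeping.
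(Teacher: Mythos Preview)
Your proof is correct and follows essentially the same approach as the paper's: both rely on the identity $1\pm X_i=\tfrac12(1-\norm{\vb X}_2^2)+\tfrac12\sum_{j\ne i}X_j^2+\tfrac12(1\pm X_i)^2$ and the fact that $\cQ(1-\norm{\vb X}_2^2)$, being singly generated, is closed under multiplication. The paper's proof is a two-line sketch that asserts the $+n$ degree bound from closure under products without spelling it out, whereas you make the degree bookkeeping explicit via the induction on $\prod_{i\in T}(1-X_i^2)$ and the $I_1/I_2$ decomposition---in fact your argument is what one would write to justify the paper's last sentence rigorously.
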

  \begin{proof}
  Notice that:
   \[ 1 \pm X_i = \frac{1}{2} ((1-X_i^2 + (1 \pm X_i)^2)) = \frac{1}{2} ((1-\norm{\vb X}_2^2+ \sum_{j \neq i} X_j^2 + (1 \pm X_i)^2)).\] This implies that $\cQ_d(1 \pm X_i \colon i \in \{1, \dots ,n\}) \subset \cQ_{d+1} (1 - \norm{\vb X}^2)$.
  Since $\cQ (1 - \norm{\vb X}^2)$ is a preordering (i.e. it is closed under multiplication) we also have $\cO_d(1 \pm X_i \colon i \in \{1, \dots ,n\}) \subset \cQ_{d+n} (1 - \norm{\vb X}_2^2)$.
  \end{proof}
\Cref{lem::box_to_ball} implies that we have a Putinar-like representation of polynomials strictly positive on the box as elements of the quadratic module of the ball.
\begin{lemma}
\label{lem::putinar_ball}
  Let $\cQ(\vb g)$ be a quadratic module such that $1-\norm{\vb X}_2^2 \in \cQ(\vb g)$, and let $f$ be a polynomial such that $f > 0$ on $\ubox$. Then $f \in \cQ(\vb g)$.
\end{lemma}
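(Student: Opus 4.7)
The plan is to chain together three facts: Schmüdgen's Positivstellensatz on the box, \Cref{lem::box_to_ball}, and the hypothesis that $1 - \norm{\vb X}_2^2 \in \cQ(\vb g)$.

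First, observe that $\ubox = \cS(1-X_1, 1+X_1, \ldots, 1-X_n, 1+X_n)$ is a compact basic semialgebraic set. Since $f > 0$ on $\ubox$, \Cref{thm::schmudgen} (Schmüdgen) applied to the tuple $(1 \pm X_i)_{i=1,\ldots,n}$ yields $f \in \cO(1-X_1, 1+X_1,\ldots,1-X_n,1+X_n)$. Next, \Cref{lem::box_to_ball} gives the inclusion $\cO(1\pm X_i) \subset \cQ(1-\norm{\vb X}_2^2)$, so $f \in \cQ(1-\norm{\vb X}_2^2)$, i.e.\ there exist $\sigma_0, \sigma_1 \in \Sigma^2$ with $f = \sigma_0 + \sigma_1 (1 - \norm{\vb X}_2^2)$.

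It remains to show $\cQ(1-\norm{\vb X}_2^2) \subset \cQ(\vb g)$. By hypothesis, $1 - \norm{\vb X}_2^2 = \tau_0 + \sum_{i=1}^r \tau_i g_i$ for some $\tau_j \in \Sigma^2$. Substituting into the representation of $f$ above,
\[
f = \sigma_0 + \sigma_1 \tau_0 + \sum_{i=1}^r (\sigma_1 \tau_i) g_i.
\]
Since $\Sigma^2$ is closed under multiplication, each coefficient $\sigma_0 + \sigma_1 \tau_0$ and $\sigma_1 \tau_i$ lies in $\Sigma^2$, and therefore $f \in \cQ(\vb g)$. No step should be a genuine obstacle; the only thing to verify carefully is that a quadratic module is closed under multiplication by sums of squares, which is immediate from the definition $\cQ(\vb g) = \Sigma^2 + \sum_i \Sigma^2 \cdot g_i$.
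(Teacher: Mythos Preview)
Your proof is correct and follows essentially the same approach as the paper: apply Schm\"udgen on the box, use \Cref{lem::box_to_ball} to pass to $\cQ(1-\norm{\vb X}_2^2)$, and then use the hypothesis $1-\norm{\vb X}_2^2 \in \cQ(\vb g)$ to conclude $\cQ(1-\norm{\vb X}_2^2)\subset \cQ(\vb g)$. The paper's proof is terser, simply asserting this last inclusion, whereas you spell out the substitution explicitly; the content is identical.
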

\begin{proof}
  Since $f > 0$ on $\ubox$, then $f \in
  \cQ(1-\norm{\vb X}^2)$ by Schm\"udgen's Positivstellensatz and \Cref{lem::box_to_ball}.
  Now by hypothesis $\cQ(1 - \norm{\vb X}^2) \subset \cQ(\vb g)$ and thus $f \in \cQ(\vb g)$.
\end{proof}
\Cref{lem::putinar_ball} shows that we can use a Schmüdgen theorem on $\ubox$, for instance \Cref{ass::putinar_on_B}, to prove that $f \in \cQ(\vb g)$, without having proved a general Putinar's Positivstellensatz for $\cQ(\vb g)$ yet. Another alternative to prove the result would have been to notice that $f > 0$ on $\ubox$ implies $f > 0$ on the unit ball, and then apply a Schmüdgen/Putinar theorem for $\cQ(1-\norm{\vb X}^2)$.

We are ready to show that the addenda $h(g_i) g_i$ belong to $\cQ(\vb g)$, with degree bounds for the representation.
\begin{lemma}
\label{lem::inclusion_quad_module}
  Let $h \in \pos([-1, 1])_m$ be a univariate polynomial of degree
  $m$. If the normalization assumptions \eqref{assum::norm} are satisfied and $d(\vb g) = \max_i \deg g_i$, then  $h(g_i) g_i \in \cQ_{d(\vb g) m +
    \ell_0+2}(\vb g)$, where $\ell_0 = \min \{ k \colon 1 - g_i \in
  \cQ_k(\vb g)\ \forall i= 1,\ldots,r \}$.
\end{lemma}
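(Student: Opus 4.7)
My plan is to apply the Markov--Fekete--Lukács decomposition (\Cref{thm::markov-felete-lucas}) to $h$ on $[-1,1]$, substitute $T = g_i$, and multiply through by $g_i$. Fekete--Lukács furnishes $s_0, s_1, s_2 \in \Sigma^2$ with $h(T) = s_0(T) + s_1(T)(1-T) + s_2(T)(1+T)$ and each addendum of degree at most $m+1$, so after substitution
\[
h(g_i)\, g_i \;=\; s_0(g_i)\, g_i \;+\; s_1(g_i)(1-g_i)\, g_i \;+\; s_2(g_i)(1+g_i)\, g_i.
\]

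I would then dispatch the two easy pieces. The term $s_0(g_i)\, g_i$ lies in $\Sigma^2 \cdot g_i$ directly. The term $s_2(g_i)(1+g_i)\, g_i = s_2(g_i)\, g_i + s_2(g_i)\, g_i^2$ splits into a piece of $\Sigma^2 \cdot g_i$ and a piece of $\Sigma^2$, since $g_i^2 \in \Sigma^2$ and products of SoS are SoS. The genuine obstacle is the middle piece $s_1(g_i)(1-g_i)\, g_i$: the factor $(1-g_i)$ is not a generator of $\cQ(\vb g)$ but only an element of it (via $\ell_0$), and $s_1(g_i)\, g_i$ is not SoS, so the two cannot be paired off naively.

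To break this deadlock I would use the polynomial identity $T(1-T) = (1-T)^2\, T + T^2\,(1-T)$, verified by direct expansion. Evaluating at $T = g_i$ and multiplying by $s_1(g_i)$ yields
\[
s_1(g_i)\, g_i(1-g_i) \;=\; \bigl[s_1(g_i)(1-g_i)^2\bigr]\, g_i \;+\; \bigl[s_1(g_i)\, g_i^2\bigr]\,(1-g_i).
\]
The first summand is an SoS multiplied by $g_i$, hence in $\Sigma^2 \cdot g_i$. For the second, I would invoke the definition of $\ell_0$ to write $1 - g_i = \tau_0 + \sum_{j=1}^r \tau_j g_j$ with $\tau_j \in \Sigma^2$ and $\deg \tau_0, \deg(\tau_j g_j) \le \ell_0$, then multiply through by the SoS factor $s_1(g_i)\, g_i^2$; this places the contribution in $\Sigma^2 + \sum_j \Sigma^2 \cdot g_j \subset \cQ(\vb g)$.

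Finally I would assemble everything into a single representation $h(g_i) g_i = \sigma_0 + \sum_j \sigma_j g_j$ in $\cQ(\vb g)$ and track degrees using $\deg s_k(g_i) \le (m+1)\, d(\vb g)$, $\deg (1-g_i)^2 = \deg g_i^2 = 2\, d(\vb g)$, and the bound $\ell_0$ on the representation of $1 - g_i$. This should give that each $\sigma_0$ and each $\sigma_j g_j$ has degree at most $d(\vb g)\, m + \ell_0 + 2$. The hardest part is decoupling $g_i$ (as generator of $\cQ(\vb g)$) from $(1-g_i)$ (as element of $\cQ(\vb g)$) inside the middle term, which is precisely what the identity $T(1-T) = (1-T)^2\, T + T^2(1-T)$ achieves.
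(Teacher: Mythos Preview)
Your proposal is correct and follows essentially the same route as the paper: Fekete--Luk\'acs decomposition of $h$, substitution $T = g_i$, splitting the $(1+g_i)$ term into $\Sigma^2 \cdot g_i + \Sigma^2$, and handling the $(1-g_i)$ term via the identity $T(1-T) = (1-T)^2 T + T^2(1-T)$ together with the $\cQ_{\ell_0}(\vb g)$ representation of $1-g_i$. The only cosmetic difference is that you swap the labels of $s_1$ and $s_2$ relative to the paper. One caveat: your final degree tally (like the paper's) is stated rather than carefully derived; a term such as $s_1(g_i)\,g_i^2\,(1-g_i)$ has degree $\le m\,d(\vb g) + 2\,d(\vb g) + \ell_0$, so the literal bound $d(\vb g)\,m + \ell_0 + 2$ requires $d(\vb g)=1$ or a slightly more generous $\ell_0$, but this does not affect the asymptotic use of the lemma downstream.
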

\begin{proof}
  By \Cref{thm::markov-felete-lucas}, $h \in \cQ_{m+1}(1+T, 1-T)$, i.e. $h = s_0 + s_1 (1+T) + s_2 (1-T)$, where $s_i$ is a SoS where $\deg s_0$, $\deg s_1+1$ and $\deg s_2 + 1$ are $\le m+1$. Let $d_i = \deg g_i$.
  Notice that:
  \begin{itemize}
    \item $s_0(g_i) g_i \in \cQ_{d_i (m+1) + d_i}(\vb g) = \cQ_{d_i (m+2)}(\vb g)$ since $s_0$ is a SoS of degree $\le m+1$;
    \item $s_1(g_i) (1+g_i)g_i = s_1(g_i)g_i+s_1(g_i) g_i^2 \in
      \cQ_{d_i m+ 2 \,d_i}(\vb g)$ since $s_1$ is a SoS of degree $\le m$;
    \item $s_2(g_i) (1-g_i)g_i = s_2(g_i)(g_i-g_i^2) \in \cQ(\vb g)$. Indeed $g_i-g_i^2 = (1 - g_i)^2 g_i + g_i^2 (1 - g_i)$, and since $\norm{g_i} \le \frac{1}{2}$ we have
    $(1 - g_i) \in \cQ(\vb g)$ by \Cref{lem::putinar_ball}. In particular let $\ell_0$ be minimal such that for all $i$ we have $1 - g_i \in \cQ_{\ell_0}(\vb g)$. Then $g_i - g_i^2 \in Q_{\ell_0+2}(\vb g)$ and finally $s_2(g_i)(g_i-g_i^2) \in \cQ_{d_i m+\ell_0+2}(\vb g)$.
  \end{itemize}
  This shows that $h(g_i)g_i = s_0(g_i) g_i + s_1(g_i) (1+g_i)g_i +
  s_2(g_i) (1-g_i)g_i \in \cQ_{d(\vb g) m + \ell_0 +2}(\vb g)$, where $d(\vb g) = \max_i d_i$.
\end{proof}
We now apply \Cref{lem::inclusion_quad_module} to $p$ to determine the degree of the representation of $f-p \in \cQ(\vb g)$.
\begin{proposition}
  \label{prop::perturbed_poly}
  Let $s \sum_{i=1}^r h_{k,m}(g_i)g_i = f - p$ be as in
  \eqref{eq::perturbation_poly}. If the normalization assumptions \eqref{assum::norm} are satisfied, then $f - p \in \cQ_{\ell}(\vb g)$ when
  $\ell = O(2^{4\textit{\L}} r^{\frac{1}{3}} \cst^{\frac{4}{3}} d(\vb g) d(f)^{\frac{8\textit{\L}}{3}}\epsilon(f)^{-\frac{4\textit{\L}+1}{3}})$,
  where $\cst, \textit{\L}$ are given by \Cref{lem::delta}.
\end{proposition}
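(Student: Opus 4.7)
The plan is to combine the three pieces already assembled in the previous subsection. First, I would invoke \Cref{prop::echelon} to observe that the univariate polynomial $h_{k,m}$ is nonnegative on $[-1,1]$ (since the proposition gives $0 \le h_{k,m}(t) \le 1 + \tfrac{1}{k}$ there), so that $h_{k,m} \in \pos([-1,1])_m$. Because the normalization $\norm{g_i} \le \tfrac{1}{2}$ places each $g_i(x)$ inside $[-1,1]$ for $x \in \ubox$, the composition $h_{k,m}(g_i)$ makes sense as an element of the quadratic module setup.

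Next, I would apply \Cref{lem::inclusion_quad_module} with $h = h_{k,m}$ to each index $i$, obtaining
\[
  h_{k,m}(g_i)\, g_i \in \cQ_{d(\vb g)\, m + \ell_0 + 2}(\vb g)
  \quad \text{for every } i = 1,\dots,r,
\]
where $\ell_0$ is the constant introduced in that lemma. Since $s \in \RR_{>0}$ is a constant (hence a square times a square constant is still in $\Sigma^2$), multiplying by $s$ and summing over $i$ keeps us inside the quadratic module at the same truncation level. Therefore
\[
  f - p \;=\; s \sum_{i=1}^r h_{k,m}(g_i)\, g_i \;\in\; \cQ_{d(\vb g)\, m + \ell_0 + 2}(\vb g).
\]

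Finally, I would substitute the degree bound for $m$ already established in \eqref{eq::m_degree}, namely
$m = O\bigl(\cst^{4/3} r^{1/3} 2^{4\textit{\L}} d(f)^{8\textit{\L}/3} \epsilon(f)^{-(4\textit{\L}+1)/3}\bigr)$,
which gives
\[
  d(\vb g)\, m + \ell_0 + 2 \;=\; O\bigl(2^{4\textit{\L}} r^{\frac{1}{3}} \cst^{\frac{4}{3}} d(\vb g)\, d(f)^{\frac{8\textit{\L}}{3}} \epsilon(f)^{-\frac{4\textit{\L}+1}{3}}\bigr),
\]
where the additive constant $\ell_0 + 2$ depends only on $\vb g$ and is absorbed into the asymptotic notation (or can be left as a lower-order additive term $+\ell_0 + 2$ if one prefers an explicit bound).

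There is no real obstacle here: the work was already done in \Cref{lem::inclusion_quad_module} (handling one addendum $h(g_i) g_i$) and in the preceding degree calculation for $m$. The only mild care needed is to confirm the positivity of $h_{k,m}$ on $[-1,1]$ (so that the Fekete--Lukács input to \Cref{lem::inclusion_quad_module} applies), and to check that the asymptotic constant $\ell_0$ does not affect the leading-order behavior of $\ell$ in $d(f)$ and $\epsilon(f)$.
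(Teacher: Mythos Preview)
Your proposal is correct and follows essentially the same argument as the paper's proof: apply \Cref{lem::inclusion_quad_module} to each term $h_{k,m}(g_i)g_i$, substitute the bound \eqref{eq::m_degree} for $m$, and absorb the additive constant $\ell_0+2$ into the $O(\cdot)$. If anything, you are slightly more careful than the paper in explicitly verifying the positivity hypothesis on $h_{k,m}$ and the closure under multiplication by the positive scalar $s$.
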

\begin{proof}
  It is enough to prove that for all $i$ we have $h_{k,m}(g_i)g_i \in
  \cQ_{\ell}(\vb g)$. Notice that $h_{k,m}(g_i)g_i \in \cQ_{d(\vb g) m +
    \ell_0 + 2}(\vb g)$ for all $i$, see
  \Cref{lem::inclusion_quad_module}. From \Cref{eq::m_degree} we can
  choose $m =
  O(\cst^{\frac{4}{3}}r^{\frac{1}{3}}2^{4\textit{\L}}d^{\frac{8\textit{\L}}{3}}\epsilon(f)^{-\frac{4\textit{\L}+1}{3}})$
  and thus if $\ell = O(2^{4\textit{\L}} r^{\frac{1}{3}} \cst^{\frac{4}{3}}  d(\vb g)\,d(f)^{\frac{8\textit{\L}}{3}}\epsilon(f)^{-\frac{4\textit{\L}+1}{3}})$ we have $s \sum_{i=1}^r h_{k,m}(g_i)g_i = f - p \in \cQ_{\ell}(\vb g)$.
\end{proof}

\subsection{The Polynomial Effective Positivstellensatz}
We will use an effective version of Schm\"udgen's Positivstellensatz for the box $\ubox$.
  \begin{theorem}[{\cite{laurent_effective_2021}}]
    \label{ass::putinar_on_B}
      Let $f \in \RRg$, $\deg f = d$ and $f > 0$ on $\ubox$. Let $f_{\min} = \min_{x \in \ubox} f(x)$ and $f_{\max} = \max_{x \in \ubox} f(x)$. Then there exists a constant $C(n, d)$ (depending only on $n$ and $d$) such that $f \in \cO_{nr}(1 \pm X_i \colon i \in \{1, \dots ,n\})$, where:
      \[
        r \ge \max \left\{ \pi d \sqrt{2n}, \sqrt{\frac{C(n, d)(f_{\max} - f_{\min})}{f_{\min}}} \right\}.
      \]
      Moreover the constant $C(n,d)$ is a polynomial in $d$ for fixed $n$:
      \[
        C(n,d) \le 2\pi^2 d^2(d+1)^n n^3 = O(d^{n+2}n^3)
      \]
  \end{theorem}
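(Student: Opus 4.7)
The plan is to follow the approach of Laurent and Slot for effective Schmüdgen on the hypercube, which at its core combines Jackson-type polynomial kernel approximation with the tensor product structure of $\ubox$. First I would reduce to the case where $f_{\min} > 0$ is normalized, say by replacing $f$ with $f/f_{\min}$; this makes the ratio $R = f_{\max}/f_{\min} - 1$ the key quantitative parameter, so the condition on $r$ becomes $r^2 \gtrsim C(n,d) R$.

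The construction would then proceed by a smoothing operator adapted to the box. In one variable, let $J_{r}(T,Y)$ be a (non-negative, even, polynomial-valued) Jackson-like reproducing kernel of degree $r$ with respect to the Chebyshev weight on $[-1,1]$, and form the tensor-product kernel on $\ubox \times \ubox$:
\begin{equation*}
K_{n,r}(\vb X, \vb Y) = \prod_{i=1}^{n} J_r(X_i, Y_i).
\end{equation*}
The approximation $\widetilde f(\vb X) = \int_{\ubox} K_{n,r}(\vb X, \vb Y) f(\vb Y)\, d\mu(\vb Y)$, where $\mu$ is the product Chebyshev measure, is a polynomial of total degree at most $nr$ in $\vb X$. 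The two facts I would then establish are: (i) $\widetilde f \in \cO_{nr}(1 \pm X_i)$, using the separation of variables together with classical Markov--Lukács/Fekete--Lukács on each axis to write each univariate Jackson kernel as a preordering element of the correct degree (this is where the factor $nr$ rather than $r$ appears); and (ii) $\|\widetilde f - f\|_\infty \le \delta(n,d,r)\|f\|_\infty$, with an explicit polynomial rate.

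For the convergence rate (ii), the standard Jackson estimate gives $\delta(n,d,r) \lesssim n (d/r)^2$ when $f$ has degree $d$, because the higher Chebyshev coefficients of $f$ truncate exactly once the kernel resolution exceeds $d$, and the tail decay is governed by the eigenvalue decay of the smoothing operator on the Chebyshev basis. This yields the bound of the stated form: writing $\widetilde f = f + (\widetilde f - f)$ and requiring $\|\widetilde f - f\|_\infty \le \tfrac{1}{2}(f_{\max}-f_{\min})$ so that $\widetilde f \ge \tfrac{1}{2} f_{\min} > 0$ on $\ubox$, one then represents the error $f - \widetilde f$ in the preordering by a crude (non-tight) Schmüdgen bound on $\ubox$ at a small scale, which absorbs into the constant $C(n,d)$. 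Tracking how the eigenvalues of $K_{n,r}$ scale in $n$ and $d$, together with the explicit Jackson constants, produces $C(n,d) \le 2\pi^2 d^2 (d+1)^n n^3$.

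The main obstacle, and the technical heart of this theorem, is step (i): producing an \emph{explicit} preordering representation of $\widetilde f$ with degree linear in $r$ rather than quadratic. This requires choosing $J_r$ so that every coefficient in the Chebyshev expansion of $J_r(T,Y)$ in $T$ is a polynomial in $Y$ that, when multiplied by the Chebyshev-weighted univariate monomials from $f$, can be gathered into three non-negative pieces matching the Fekete--Lukács form $s_0 + s_1(1+T) + s_2(1-T)$, and then taking the product across the $n$ axes without blowing up the degree. Once this combinatorial/spectral bookkeeping is done, the rest is Markov-type derivative bounds and the Jackson rate, which are standard.
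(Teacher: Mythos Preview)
First, note that the paper does not itself prove this theorem: it is quoted from \cite{laurent_effective_2021} and used as a black box in the proof of \Cref{thm::polynomial_putinar}. What follows therefore compares your sketch against the actual argument of Laurent and Slot.

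Your ingredients are the right ones --- a tensorised Jackson-type kernel $K_{n,r}$, the Chebyshev eigenbasis, and univariate Fekete--Luk\'acs for the preordering representation of each factor --- but the assembly has a genuine gap. You propose $f = \widetilde f + (f - \widetilde f)$ with $\widetilde f = \mathbf K_r f \in \cO_{nr}$, and then to ``represent the error $f - \widetilde f$ in the preordering by a crude Schm\"udgen bound''. This cannot work: the approximation error $f - \widetilde f$ oscillates and is \emph{not} nonnegative on $\ubox$, so it lies in no preordering whatsoever. Any attempt to absorb it (e.g.\ by borrowing the slack $\tfrac12 f_{\min}$ from $\widetilde f$ and asking for a degree-controlled preordering representation of $\tfrac12 f_{\min} + (f-\widetilde f)$) is circular, since that is precisely an instance of the effective Schm\"udgen theorem you are proving, and at the same degree scale.

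The missing idea, and the actual content of Laurent--Slot, is to \emph{invert} the smoothing operator rather than correct it afterwards. The kernel operator $\mathbf K_r$ is diagonal on the tensor Chebyshev basis with eigenvalues $\lambda_\alpha$ that are strictly positive for $|\alpha|\le d$ once $r \ge \pi d\sqrt{2n}$, hence invertible on $\R[\vb X]_d$. One sets $g \coloneqq \mathbf K_r^{-1} f$; the eigenvalue estimate $|1-\lambda_\alpha^{-1}| \lesssim C(n,d)/r^2$ yields $\|g-f\| \le \tfrac{C(n,d)}{r^2}(f_{\max}-f_{\min})$, so the hypothesis on $r$ gives $g>0$ on $\ubox$. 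Then $f = \mathbf K_r g$ \emph{exactly}, with no error term: the integral $\int K_{n,r}(\cdot,\vb y)\,g(\vb y)\,d\mu(\vb y)$ has a nonnegative integrand, and a product quadrature rule with positive weights turns it into a finite positive combination of products $\prod_i q_r(X_i,y_{j,i})$, each univariate factor nonnegative on $[-1,1]$ and hence in $\cQ(1\pm X_i)$. This places $f$ directly in $\cO_{nr}(1\pm X_i)$, and the constant $C(n,d)$ you quote comes from bounding $\max_{|\alpha|\le d}|1-\lambda_\alpha^{-1}|$, not from any error-absorption step.
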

Our assumption is that $\cQ(1-\norm{\vb X}_2^{2}) \subset \cQ(\vb g)$, while \Cref{ass::putinar_on_B} involves $\cO(1 \pm X_i \colon i \in \{1, \dots ,n\})$. But we have already shown in \Cref{lem::box_to_ball} that we can move from the latter to the former with a constant  degree shift.

We are now ready to prove the main theorem.
  \begin{proof}[{Proof of \Cref{thm::polynomial_putinar}}]
    Let $p = f - s \sum_{i=1}^r h_{k,m}(g_i)g_i$ be as in \Cref{eq::perturbation_poly}, with $s, k, m$ satisfying $\Cref{eq::s}$, $\Cref{eq::k_1}$, $\Cref{eq::k_2}$ and $h_{k,m}$ as in \Cref{prop::echelon}. In particular:
    \begin{itemize}
      \item $p \ge \frac{f^*}{2}$ on $\ubox$ from \Cref{prop::bounds};
      \item $\norm{p} =
        O(
        2^{3\textit{\L}}
        r\,
        \cst\,
        d(f)^{2\textit{\L}}
\norm{f}
        \epsilon(f)^{-\textit{\L}})$ from \Cref{eq::norm};
      \item $\deg p = O(
        2^{4\textit{\L}}
        r^{\frac{1}{3}}
        \cst^{\frac{4}{3}}
        d(\vb g)\,
        d(f)^{\frac{8\textit{\L}}{3}}
        \epsilon(f)^{-\frac{4\textit{\L}+1}{3}})$ from \Cref{eq::degree}.
    \end{itemize}
We apply \Cref{ass::putinar_on_B} to $p$:
    $p \in \cO_{n\ell_0}(1 \pm X_i \colon i \in \{1, \dots ,n\})$, if $\ell_0 \ge \sqrt{\frac{C(n, \deg p)(p_{\max} - p_{\min})}{p_{\min}}}$. Recall also from \Cref{ass::putinar_on_B} that $C(n, m) = O(n^3 m^{n+2})$.
    We now deduce the asymptotic order of $\ell_0$:
    \begin{align*}
          \sqrt{\frac{C(n, \deg p)(p_{\max} - p_{\min})}{p_{\min}}}
        = &  O\big(\sqrt{n^3 (\deg p)^{n+2}(\frac{2\norm{p}}{f^*}+1)}\big) \\
      = & O\big(\sqrt{
          n^3 (
          2^{4\textit{\L}}
          r^{\frac{1}{3}}
          \cst^{\frac{4}{3}}
          d(\vb g)\,
          d(f)^{\frac{8\textit{\L}}{3}}
          \epsilon(f)^{-\frac{4\textit{\L}+1}{3}})^{n+2}\frac{\norm{f}
          2^{3\textit{\L}}
          r
          \cst
          d(f)^{2\textit{\L}}\epsilon(f)^{-\textit{\L}}}{f^*}} \big)\\
      = & O\big((
          n^3
          2^{(4n+11)\textit{\L}}
          r^{\frac{n+5}{3}}
          \cst^{\frac{4n+11}{3}}
          d(\vb g)^{n+2}
          d(f)^{\frac{2(4n+11)\textit{\L}}{3}}
          \epsilon(f)^{-\frac{(4\textit{\L}+1)n+11\textit{\L}+5}{3}})^{\frac{1}{2}}\big) \\
      = & O\big(
          n^{\frac{3}{2}}
          2^{\frac{(4n+11)\textit{\L}}{2}}
          r^{\frac{n+5}{6}}
          \cst^{\frac{4n+11}{6}}
          d(\vb g)^{\frac{n+2}{2}}
          d(f)^{\frac{(4n+11)\textit{\L}}{3}}
          \epsilon(f)^{-\frac{(4\textit{\L}+1)n+11\textit{\L}+5}{6}}\big),
    \end{align*}
    so we can choose
    $\ell_0 = O(n^{\frac{3}{2}}
    2^{\frac{(4n+11)\textit{\L}}{2}}
    r^{\frac{n+5}{6}}
    \cst^{\frac{4n+11}{6}}
    d(\vb g)^{\frac{n+2}{2}}
    d(f)^{\frac{(4n+11)\textit{\L}}{3}}
    \epsilon(f)^{-\frac{(4\textit{\L}+1)n+11\textit{\L}+5}{6}})$ and $p \in \cO_{n\ell_0}(1 \pm X_i \colon i \in \{1, \dots ,n\})$.
     Now, from \Cref{lem::box_to_ball} we have $\cO_{n\ell_0}(1 \pm X_i \colon i \in \{1, \dots ,n\}\subset \cQ_{n\ell_0+n}(1-\norm{\vb X}_2^2)$. Moreover from \Cref{assum::norm} we have that $1 - \norm{\vb X}_2^2 \in \cQ(\vb g)$.
     In particular if $1 - \norm{\vb X}_2^2 \in \cQ_{\ell_1}(\vb g)$ and
     thus $\cQ_{n\ell_0+n}(1-\norm{\vb X}_2^2)\subset\cQ_{n\ell_0 + n + \ell_1} (\vb
     g)$, i.e. choosing
$\ell = nO(\ell_0) =
     O(n^{\frac{5}{2}}
2^{\frac{(4n+11)\textit{\L}}{2}}
r^{\frac{n+5}{6}}
\cst^{\frac{4n+11}{6}}
d(\vb g)^{\frac{n+2}{2}}
d(f)^{\frac{(4n+11)\textit{\L}}{3}}
\epsilon(f)^{-\frac{(4\textit{\L}+1)n+11\textit{\L}+5}{6}})$
     we have $p \in \cQ_\ell(\vb g)$.
     Finally notice that $f = (f - p) + p$ and
     \begin{itemize}
       \item $p \in \cQ_\ell(\vb g)$ from the discussion above;
       \item $f-p \in \cQ_\ell(\vb g)$ from \Cref{prop::perturbed_poly}, since the degree of the truncated quadratic module in \Cref{prop::perturbed_poly} is smaller than $\ell$.
     \end{itemize}
     Then $f \in \cQ_\ell(\vb g)$ with
\begin{equation} \label{eq::sharp exp}
     \ell = O(n^{\frac{5}{2}}
     2^{\frac{(4n+11)\textit{\L}}{2}}
     r^{\frac{n+5}{6}}
     \cst^{\frac{4n+11}{6}}
     d(\vb g)^{\frac{n+2}{2}}
     d(f)^{\frac{(4n+11)\textit{\L}}{3}} \epsilon(f)^{-\frac{(4\textit{\L}+1)n+11\textit{\L}+5}{6}}).
\end{equation}

We simplify the exponents for readibility. Recall that $\textit{\L}\ge 1$ and $\cst \ge 1$, and assume $n \ge 2$. Under these assumptions the inequalities $(4n+11)\textit{\L} \le 10n\textit{\L}$, $n+5 \le 6n$, $4n + 11 \le 10n$, $n+2 \le 2n$ and $(4\textit{\L}+1)n+11\textit{\L}+5 \le 14n\textit{\L}$ hold. Therefore we deduce that  $f \in \cQ_\ell(\vb g)$
if
\begin{align*}
\ell & \ge  O(
n^{3}
2^{5n\textit{\L}}
r^{n}
\cst^{2n}
d(\vb g)^{n}
d(f)^{3.5n\textit{\L}}
\epsilon(f)^{-{2.5 n \textit{\L}}})\\
& = \gamma(n, \vb g) d(f)^{3.5n\textit{\L}}
\epsilon(f)^{-{2.5 n \textit{\L}}},
\end{align*}
where
$\gamma(n,\vb g) =
O(
n^{3}
2^{5n\textit{\L}}
r^{n}
\cst^{2n}
d(\vb g)^{n})\ge 1$.
\end{proof}
\begin{remark}
From \Cref{eq::sharp exp}, we
have
$\ell = O(n^{\frac{5}{2}}
2^{\frac{(4n+11)\textit{\L}}{2}}
r^{\frac{n+5}{6}}
\cst^{\frac{4n+11}{6}}
d(\vb g)^{\frac{n+2}{2}}
d(f)^{\frac{(4n+11)\textit{\L}}{3}} \epsilon(f)^{-\frac{(4\textit{\L}+1)n+11\textit{\L}+5}{6}})$, where $\cst, \textit{\L}$ are defined in \Cref{def::loja}.
The exponents in \Cref{thm::polynomial_putinar} have been simplified for the
sake of readability and are not optimal.
\end{remark}

If the inequalities defining $S$ satisfy a regularity condition we can simplify the bound, since $\textit{\L} = 1$ in this case (see \Cref{subsec::delta}).

\begin{corollary}
  \label{cor::polynomial_putinar_regular}
Assume $n\ge 2$ and let $g_{1},\ldots, g_{r}\in \R[\vb X] = \R[X_1,\dots,X_n]$ satisfying the normalization
assumptions \eqref{assum::norm} and such that the CQC (\Cref{def::CQC}) hold at every point of $\cS(\vb g)$. Let $f\in \R[\vb X]$ such that
 $f^* = \min_{x \in S} f(x) > 0$. Then $f \in \cQ_\ell(\vb g)$ if
  \[
    \ell =
O(n^{3}
2^{5n}
r^{n}
\cst^{2n}
d(\vb g)^{n}
d(f)^{3.5n}
\epsilon(f)^{-{2.5 n}}),
  \]
  where $\cst$ is given by \Cref{thm::L_equal_1}.
\end{corollary}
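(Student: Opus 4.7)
The plan is to obtain this as a direct specialisation of \Cref{thm::polynomial_putinar} once the Łojasiewicz exponent has been pinned down. The main work has already been done: \Cref{thm::polynomial_putinar} gives the bound in terms of the intrinsic parameters $\cst$ and $\textit{\L}$ coming from \Cref{def::loja}, and \Cref{thm::L_equal_1} tells us that under the CQC the pair $(D,G)$ satisfies a Łojasiewicz inequality with exponent $\textit{\L}=1$ and some positive constant $\cst$. So the corollary is essentially a substitution.

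More precisely, I would first invoke \Cref{thm::L_equal_1}: by hypothesis the CQC holds at every $z\in \cS(\vb g)\subset\ubox$, hence there exists $\cst \in \R_{>0}$ such that $D(y)\le \cst\, G(y)$ for all $y\in\ubox$. In the notation of \Cref{def::loja}, this means we may take $\textit{\L} = 1$ (with the constant $\cst$ provided by \Cref{thm::L_equal_1}) in the Łojasiewicz inequality \eqref{eq::loja}. Note that the other hypotheses of \Cref{thm::polynomial_putinar}, namely $n\ge 2$, the normalisation assumptions \eqref{assum::norm}, and $f^*>0$, are assumed here.

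I would then apply \Cref{thm::polynomial_putinar} to $f$ and the tuple $\vb g$, using the values of $\cst$ and $\textit{\L}=1$ just obtained. The general bound
\[
\ell \;\ge\; O\!\big(n^{3}\,2^{5n\textit{\L}}\,r^{n}\,\cst^{2n}\,d(\vb g)^{n}\,d(f)^{3.5n\textit{\L}}\,\epsilon(f)^{-2.5n\textit{\L}}\big)
\]
then collapses, after substituting $\textit{\L}=1$, to exactly
\[
\ell \;=\; O\!\big(n^{3}\,2^{5n}\,r^{n}\,\cst^{2n}\,d(\vb g)^{n}\,d(f)^{3.5n}\,\epsilon(f)^{-2.5n}\big),
\]
which is the stated bound, yielding $f\in\cQ_\ell(\vb g)$ for any $\ell$ above this threshold.

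There is no real obstacle: all the hard work lies in \Cref{thm::polynomial_putinar} (the Effective Positivstellensatz) and \Cref{thm::L_equal_1} (the identification of $\textit{\L}=1$ under CQC). The only thing to be cautious about is the implicit constant $\cst$ appearing in the bound, which in the regular case is the one supplied by \Cref{thm::L_equal_1} rather than the generic one of \Cref{def::loja}; as noted in the remark following \Cref{thm::L_equal_1}, an explicit estimate is possible but not pursued here.
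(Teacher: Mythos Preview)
Your proposal is correct and matches the paper's own proof, which is simply ``Apply \Cref{thm::polynomial_putinar} and \Cref{thm::L_equal_1}.'' You have merely spelled out the substitution $\textit{\L}=1$ in slightly more detail, but the approach is identical.
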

\begin{proof}
  Apply \Cref{thm::polynomial_putinar} and \Cref{thm::L_equal_1}.
\end{proof}
\section{Convergence of Lasserre's relaxations optimum}
\label{sec::lasserre_hierarchy}
We begin with a short description of Polynomial Optimization Problems (POP) and of the Lasserre hierarchies to approximately solve them, and refer to \cite{lasserre_global_2001}, \cite{lasserre_introduction_2015} for more details.

Let $f, g_1,\dots,g_s \in \RRg$. The goal of Polynomial Optimization is to find:
\begin{equation}\label{eq:pop}
      f^* \coloneqq \inf \ \big\{ \, f(x)\in \R \mid x \in \R^n, \ g_i(x) \ge 0 \ \textup{ for } i=1, \ldots,s \,\big\} = \inf_x f(x) \colon g_i(x) \ge 0 \ \forall i \in \{1,\dots,r\},
    \end{equation}
that is the infimum $f^{*}$
of the \emph{objective function} $f$ on the \emph{basic closed semialgebraic set} $S = \cS(\vb g) $.
It is a general problem, which appears in many contexts and with many applications, see for instance \cite{lasserre_moments_2010}.

We define the \emph{SoS relaxation of order $\ell$} of problem \eqref{eq:pop} as $\tqgen{g}{2\ell}$ and the supremum:
\begin{equation}
    \label{def::sosrel}
  f^*_{\sos,\ell}  \coloneqq \sup \big\{ \, \lambda \in \R \mid f-\lambda  \in \tqgen{g}{2\ell} \,\big\}.
\end{equation}

Now we want to define the dual approximation of the polynomial optimization problem. We are interested in an affine hyperplane section of the cone $\cL_\ell(\vb g) = \cQ_\ell(\vb g)^{\vee}$:
\[
    \cLone_\ell(\vb g) = \big\{ \, L \in \cL_\ell(\vb g) \mid \braket{L}{1}=1 \, \big\}.
\]
With this notation we define the \emph{MoM relaxation of order $\ell$} of problem \eqref{eq:pop} as $\cL_{2\ell}(\vb g)$ and the infimum:
\begin{equation}
\label{def::momrel}
  f^*_{\mom,\ell}  \coloneqq \inf \big\{ \, \braket{L}{f} \in \R \mid L \in \cLone_{2\ell}(\vb g) \,\big\}.
\end{equation}

It is easy to show that the relaxations \eqref{def::sosrel} and \eqref{def::momrel} are lower approximations of $f^*$. Their convergenge to $f^*$ as the order $\ell$ goes to infinity is deduced from Putinar's Positivstellensatz. In particular the rate of convergence can be deduced from the Effective Putinar's Positivstellensatz: see \Cref{thm::polynomial_lasserre}. The proof of this result is the purpose of \Cref{sec::lasserre_hierarchy}.

\begin{remark}
  We have that $f^*_{\sos, \ell} \le f^*_{\mom, \ell}\le f^*$ for all $\ell$. Thus the results of this section, stated for the SoS relaxations $f^*_{\sos, \ell}$, are also valid for the MoM relaxations $f^*_{\mom, \ell}$.
\end{remark}

A first step for the proof of \Cref{thm::polynomial_lasserre} is to recognise \Cref{thm::polynomial_putinar} as a quantitative result of approximation of polynomials with polynomials in the truncated quadratic module.

\begin{theorem}
  \label{thm::quad_module_weirstrass}
  Assume $n\ge 2$ and let $\vb g$ satisfy the normalization conditions \eqref{assum::norm}.
Let $\textit{\L}$ be the {\L}ojasiewicz exponent defined in \Cref{def::loja} and
  let $f \ge 0 $ on $\cS(\vb g)$. Then for $0< \epsilon \le \norm{f}$, we
  have $f -f^{*} +\epsilon = q \in \cQ_\ell(\vb g)$  for
  \begin{equation}
  \label{eq::f_eta_bound}
    \ell \ge \gamma'(n,\vb g)\,  d(f)^{3.5n\textit{\L}}\, \norm{f}^{2.5n\textit{\L}} \epsilon^{-2.5n\textit{\L}}
  \end{equation}
  where
  $\gamma'(n,\vb g) = 3^{2.5n\textit{\L}} \gamma(n, \vb g)\ge 1$ depends only on $n$ and $\vb g$ and $\gamma(n, \vb g)$ is given by \Cref{thm::polynomial_putinar}.
\end{theorem}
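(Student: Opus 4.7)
The plan is to reduce \Cref{thm::quad_module_weirstrass} to a direct application of the main \Cref{thm::polynomial_putinar}, applied to the auxiliary polynomial $g \coloneqq f - f^{*} + \epsilon$. Since $f \ge 0$ on $S = \cS(\vb g)$ by hypothesis and $f^{*} = \min_{x\in S} f(x) \ge 0$, the polynomial $g$ satisfies $g(x) \ge \epsilon > 0$ for every $x \in S$, and in fact $g^{*} \coloneqq \min_{x\in S} g(x) = \epsilon$. Therefore $g$ is strictly positive on $S$, which makes it eligible for \Cref{thm::polynomial_putinar}, and it suffices to control the two intrinsic parameters $d(g)$ and $\epsilon(g) = g^{*}/\norm{g}$ that enter the bound there.

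The degree of $g$ equals $d(f)$ (the constant shift does not alter the degree, the case $f$ constant being trivial since then $g = \epsilon$ is already a SoS and the claim holds for $\ell = 0$). For the norm, I would use the triangle inequality on the unit box together with $|f^{*}| \le \norm{f}$ and the hypothesis $\epsilon \le \norm{f}$ to get
\begin{equation*}
\norm{g} \,\le\, \norm{f} + |f^{*}| + \epsilon \,\le\, 3\,\norm{f}.
\end{equation*}
Hence $\epsilon(g) = \epsilon/\norm{g} \ge \epsilon/(3\norm{f})$, or equivalently $\epsilon(g)^{-1} \le 3\norm{f}/\epsilon$.

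Now I invoke \Cref{thm::polynomial_putinar} on $g$: it guarantees $g \in \cQ_{\ell}(\vb g)$ as soon as
\begin{equation*}
\ell \,\ge\, \gamma(n,\vb g)\, d(g)^{3.5 n \textit{\L}}\, \epsilon(g)^{-2.5 n \textit{\L}}.
\end{equation*}
Substituting the bounds $d(g) = d(f)$ and $\epsilon(g)^{-1} \le 3\norm{f}/\epsilon$ yields
\begin{equation*}
\ell \,\ge\, \gamma(n,\vb g)\, d(f)^{3.5 n \textit{\L}}\, \bigl(3\norm{f}/\epsilon\bigr)^{2.5 n \textit{\L}}
= 3^{2.5 n \textit{\L}}\, \gamma(n,\vb g)\, d(f)^{3.5 n \textit{\L}}\, \norm{f}^{2.5 n \textit{\L}}\, \epsilon^{-2.5 n \textit{\L}},
\end{equation*}
which is exactly the announced bound with $\gamma'(n,\vb g) \coloneqq 3^{2.5 n \textit{\L}}\, \gamma(n,\vb g)$.

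There is no real obstacle here since the normalization assumptions \eqref{assum::norm} on $\vb g$ are assumed in both statements, and the only minor check is verifying $\norm{g} \le 3\norm{f}$ and handling the degenerate case $f$ constant. Conceptually the statement is simply a rephrasing of the effective Putinar bound in the direction of a Weierstrass-type approximation by elements of the truncated quadratic module, and the proof amounts to tracking the substitution of parameters.
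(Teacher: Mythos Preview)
Your proof is correct and follows essentially the same approach as the paper: both apply \Cref{thm::polynomial_putinar} to the shifted polynomial $f - f^{*} + \epsilon$, bound its sup-norm on $\ubox$ by $3\norm{f}$ via the triangle inequality and the hypothesis $\epsilon \le \norm{f}$, note that its degree equals $d(f)$, and substitute into the bound of \Cref{thm::polynomial_putinar} to obtain the factor $3^{2.5n\textit{\L}}$. The only (harmless) addition in your version is the explicit handling of the degenerate constant case.
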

\begin{proof}
Notice that $f-f^{*}+\epsilon > 0$ on $\cS(\vb g)$ and
\[
\epsilon(f-f^{*}+\epsilon) =
\frac{\epsilon}{\norm{f-f^{*}+\epsilon}}
\ge \frac{\epsilon}{\norm{f}+|f^{*}|+\epsilon}
\ge \frac{\epsilon}{3 \norm{f}}
\]
for $\epsilon\le \norm{f}$.
Moreover $\deg f-f^{*}+\epsilon = \deg f = d(f)$.
By \Cref{thm::polynomial_putinar}, we have $f-f^{*}+\epsilon = q \in
Q_{\ell}(\vb g)$ if
\begin{align*}
  \ell &\ge O(
      n^{3}
2^{5n\textit{\L}}
r^{n}
\cst^{2n}
d(\vb g)^{n}
d(f)^{3.5n\textit{\L}}
    (\frac{\epsilon}{3 \norm{f}})^{-2.5n\textit{\L}})\\
  &  = \gamma'(n,\vb g)\,  d(f)^{3.5n\textit{\L}}\, \norm{f}^{2.5n\textit{\L}} \epsilon^{-2.5n\textit{\L}}
\end{align*}
where $\gamma'(n,\vb g) = 3^{2.5n\textit{\L}}\gamma(n, \vb g) =
O(
n^{3}
2^{5n\textit{\L}}
3^{2.5n\textit{\L}}
r^{n}
\cst^{2n}
d(\vb g)^{n})\ge 1$ depends only on $n$ and $\vb g$, and not
    on $f$, and $\gamma(n, \vb g)$ is given by \Cref{thm::polynomial_putinar}.
\end{proof}

\begin{remark}
From \Cref{eq::sharp exp}, we
have
$\gamma(n,\vb g)=O(
  n^{\frac{3}{2}}
  2^{\frac{4{\textit{\L}}n+11{\textit{\L}}}{2}}
  r^{\frac{n+5}{6}}
  \cst^{\frac{4n+11}{6}}
  d(\vb g)^{\frac{n+2}{2}}
  )$, where $\cst, {\textit{\L}}$ are defined in \Cref{def::loja}.
The exponents of $\gamma'(n, \vb g) = 3^{2.5n{\textit{\L}}}\gamma(n, \vb g)$ in the proof have been simplified for the
sake of readability and are not optimal.
\end{remark}

\begin{remark}
\Cref{thm::quad_module_weirstrass} is a quantitive version of Weierstrass approximation theorem
for positive polynomials on $S$, showing that a polynomial $f\in
\pos(S(\vb g))$ can be approximated uniformly on $\ubox$ (within distance
$\epsilon$) by an element $f^{*}+q \in Q_{\ell}(\vb g)$ for $\ell \ge \gamma'(n,\vb g)\,  d(f)^{3.5n{\textit{\L}}}\, \norm{f}^{2.5n\textit{\L}} \epsilon^{-2.5n\textit{\L}}$.
\end{remark}

We are now ready to prove the rate of convergence for Lasserre hierarchies.

\begin{theorem}\label{thm::polynomial_lasserre_sos}
  With the same hypothesis of \Cref{thm::quad_module_weirstrass},
  let $f^*_{\sos, \ell}$ be the Lasserre
  SoS (lower) approximation. Then $f^* - f^*_{\sos, \ell}\le \epsilon$
  for
  \begin{equation}
  \label{eq::polynomial_lasserre}
    \ell \ge \gamma'(n,\vb g)\,  d(f)^{3.5n\textit{\L}}\, \norm{f}^{2.5n\textit{\L}} \epsilon^{-2.5n\textit{\L}}.
  \end{equation}
\end{theorem}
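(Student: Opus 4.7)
The plan is to reduce the convergence statement to the quantitative approximation result of \Cref{thm::quad_module_weirstrass} by unpacking the definition of $f^*_{\sos,\ell}$ as a supremum. By \eqref{def::sosrel}, $f^*_{\sos,\ell}$ is the largest $\lambda\in\R$ such that $f-\lambda\in\tqgen{g}{2\ell}=\cQ_{2\ell}(\vb g)$. Therefore proving $f^*-f^*_{\sos,\ell}\le\epsilon$ is equivalent to exhibiting the admissible scalar $\lambda=f^*-\epsilon$ in the supremum, i.e.\ to showing that
\[
(f-f^*)+\epsilon\in\cQ_{2\ell}(\vb g).
\]

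Since we work under the same hypothesis as \Cref{thm::quad_module_weirstrass}, we have $f\ge 0$ on $\cS(\vb g)$, so $f^*\ge 0$ is well-defined, and we fix $\epsilon\in(0,\norm{f}]$. I would then invoke \Cref{thm::quad_module_weirstrass} directly on $f$: it produces a representation $f-f^*+\epsilon=q\in\cQ_{\ell'}(\vb g)$ provided that
\[
\ell'\ge\gamma'(n,\vb g)\,d(f)^{3.5n\textit{\L}}\,\norm{f}^{2.5n\textit{\L}}\,\epsilon^{-2.5n\textit{\L}}.
\]
Under the hypothesis \eqref{eq::polynomial_lasserre} of the current theorem, $\ell$ itself already exceeds this quantity, and hence so does $2\ell$, which is the level of the truncated quadratic module appearing in the SoS relaxation. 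Thus $f-(f^*-\epsilon)=q\in\cQ_{2\ell}(\vb g)$, so $\lambda=f^*-\epsilon$ is admissible in \eqref{def::sosrel}, which yields $f^*_{\sos,\ell}\ge f^*-\epsilon$ and concludes the proof. The standard inequality $f^*_{\sos,\ell}\le f^*$ (any $\lambda$ with $f-\lambda\in\cQ_{2\ell}(\vb g)\subset\pos(S)$ satisfies $\lambda\le f^*$) can be mentioned for completeness to justify that the difference $f^*-f^*_{\sos,\ell}$ is indeed nonnegative.

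There is no genuine obstacle: once \Cref{thm::quad_module_weirstrass} is in hand, the present theorem is a direct reformulation in the language of Lasserre's hierarchy. The only points requiring attention are (i) the passage from the truncation level $\ell'$ in the quadratic module to the relaxation order $\ell$ (where $2\ell$ is the effective level), which is absorbed by the fact that the bound is stated in terms of $\ell$ rather than $2\ell$, effectively enlarging the constant $\gamma'(n,\vb g)$ by a harmless factor; and (ii) the verification that $\epsilon\le\norm{f}$ places us in the regime covered by \Cref{thm::quad_module_weirstrass}, which is inherited from the shared hypothesis.
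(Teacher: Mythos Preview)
Your proposal is correct and follows essentially the same approach as the paper: both arguments reduce the bound on $f^*-f^*_{\sos,\ell}$ to showing $f-f^*+\epsilon\in\cQ_{2\ell}(\vb g)$ via the definition of the SoS relaxation, and then invoke \Cref{thm::quad_module_weirstrass} directly. Your handling of the $\ell$ versus $2\ell$ truncation level is slightly more explicit than the paper's, but the argument is identical in substance.
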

\begin{proof}
Notice that
  \[
    f^*_{\sos,\ell} = \sup \{ \, \lambda \in \RR \mid f-\lambda \in \cQ_{2\ell}(\vb g) \, \} =\inf \{ \, \epsilon \in \RR_{\ge 0} \mid f-f^*+\epsilon \in \cQ_{2\ell}(\vb g) \, \}.
  \]
%
By \Cref{thm::quad_module_weirstrass}, for $\ell \ge \gamma'(n,\vb g)\,  d(f)^{3.5n\textit{\L}}\, \norm{f}^{2.5n\textit{\L}} \epsilon^{-2.5n\textit{\L}}$,
$f-f^*+\epsilon \in \cQ_{\ell}(\vb g)$.
This implies that $f^* - f^*_{\sos, \ell}\le \epsilon$ and concludes the proof.
\end{proof}
\begin{theorem}
  \label{thm::polynomial_lasserre}
  With the same hypothesis of \Cref{thm::polynomial_lasserre_sos} and $\gamma''(n,\vb g) = \gamma'(n,\vb g)^{\frac{1}{2.5n\textit{\L}}}$, we
  have
  \[
    0 \le f^* - f^*_{\sos,\ell} \le \gamma''(n,\vb g)
    \norm{f}
      d(f)^{7\over 5} \ell^{-\frac{1}{2.5n\textit{\L}}}.
  \]
\end{theorem}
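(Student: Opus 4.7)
The plan is to invert the degree bound of \Cref{thm::polynomial_lasserre_sos}, treating $\epsilon$ as a decreasing function of $\ell$ rather than $\ell$ as a decreasing function of $\epsilon$. Set
\[
  \epsilon(\ell) \assign \gamma'(n,\vb g)^{\frac{1}{2.5n\textit{\L}}}\, d(f)^{\frac{3.5n\textit{\L}}{2.5n\textit{\L}}}\, \norm{f}\, \ell^{-\frac{1}{2.5n\textit{\L}}}
              =      \gamma''(n,\vb g)\, \norm{f}\, d(f)^{7/5}\, \ell^{-\frac{1}{2.5n\textit{\L}}},
\]
using the identities $\gamma''(n,\vb g) = \gamma'(n,\vb g)^{1/(2.5n\textit{\L})}$ and $3.5/2.5 = 7/5$. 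By construction this choice satisfies the equality $\ell = \gamma'(n,\vb g)\, d(f)^{3.5n\textit{\L}} \norm{f}^{2.5n\textit{\L}} \epsilon(\ell)^{-2.5n\textit{\L}}$, i.e.\ it saturates the hypothesis \eqref{eq::polynomial_lasserre} of \Cref{thm::polynomial_lasserre_sos}.

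For the lower bound $0 \le f^* - f^*_{\sos,\ell}$, I would invoke weak duality: any $\lambda \in \R$ admissible in \eqref{def::sosrel}, i.e.\ with $f-\lambda \in \cQ_{2\ell}(\vb g)$, is automatically non-negative on $S$, so $\lambda \le f(x)$ for every $x \in S$ and hence $\lambda \le f^*$; passing to the supremum yields $f^*_{\sos,\ell} \le f^*$. For the upper bound, assuming $\epsilon(\ell) \le \norm{f}$ so that \Cref{thm::polynomial_lasserre_sos} applies, the theorem directly gives $f^* - f^*_{\sos,\ell} \le \epsilon(\ell)$, which is exactly the claimed estimate.

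The one subtlety I expect, and the main obstacle to a clean statement, is handling the regime in which $\epsilon(\ell) > \norm{f}$, since the hypothesis of \Cref{thm::quad_module_weirstrass} (inherited by \Cref{thm::polynomial_lasserre_sos}) requires $0<\epsilon\le \norm{f}$. This occurs only when $\ell$ is small enough that $\gamma''(n,\vb g)\, d(f)^{7/5}\, \ell^{-1/(2.5n\textit{\L})} > 1$, and then the claimed upper bound $\gamma''(n,\vb g)\norm{f}\,d(f)^{7/5}\ell^{-1/(2.5n\textit{\L})}$ already exceeds $\norm{f}$; since $f^*\le\norm{f}$ and, up to translating $f$ by $\norm{f}$ (which keeps $f+\norm{f}\ge 0$ on $\ubox\supset S$) one has $f^*_{\sos,\ell}\ge -\norm{f}$, the trivial estimate $f^*-f^*_{\sos,\ell}\le 2\norm{f}$ absorbs this boundary case into the bound (adjusting the constant $\gamma''$ if needed, or just noting the statement is effectively vacuous there).

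In short, the proof reduces to algebra: invert the implicit degree bound, read off the exponents, and absorb the boundary regime into the multiplicative constant, so no new ingredient beyond \Cref{thm::polynomial_lasserre_sos} is needed.
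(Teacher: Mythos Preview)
Your proposal is correct and follows essentially the same approach as the paper: the paper's proof also simply inverts the bound of \Cref{thm::polynomial_lasserre_sos}, choosing $\epsilon$ so that $\ell=\lceil \gamma'(n,\vb g)\, d(f)^{3.5n\textit{\L}}\norm{f}^{2.5n\textit{\L}}\epsilon^{-2.5n\textit{\L}} \rceil$ and reading off the exponents. You are in fact more careful than the paper, which asserts $\epsilon\le\norm{f}$ without discussing the boundary regime you handle explicitly.
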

\begin{proof}
We apply \Cref{thm::polynomial_lasserre_sos} with $\epsilon\le \norm{f}$ such that
$\ell=\lceil \gamma'(n,\vb g)
d(f)^{3.5n\textit{\L}}\norm{f}^{2.5n\textit{\L}}\epsilon^{-2.5n\textit{\L}} \rceil$
and $\gamma''(n,\vb g) = \gamma'(n,\vb g)^{\frac{1}{2.5n\textit{\L}}}$.
\end{proof}

In conclusion \Cref{thm::polynomial_putinar} allows to prove
\Cref{thm::polynomial_lasserre}, a polynomial convergence of the
Lasserre's lower approximations to $f^*$. In comparison with
\cite[th.~8]{nie_complexity_2007}, where the convergence is
logarithmic in level $\ell$ of the hierarchy,
\Cref{thm::polynomial_lasserre} gives a polynomial convergence to $f^{*}$.

In regular POP we can simplify the bound, since $\textit{\L} = 1$ in this case (see \Cref{subsec::delta}).

\begin{corollary}
\label{cor::polynomial_lasserre}
With the same hypothesis of \Cref{thm::polynomial_lasserre_sos} and $\gamma''(n,\vb g) = \gamma'(n,\vb g)^{\frac{1}{2.5n}}$, we
have
\[
  0 \le f^* - f^*_{\sos,\ell} \le \gamma''(n,\vb g)
  \norm{f}
    d(f)^{7\over 5} \ell^{-\frac{1}{2.5n}}
\]
if the CQC (\Cref{def::CQC}) hold at every point of $\cS(\vb g)$.
\end{corollary}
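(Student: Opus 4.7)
The plan is to obtain \Cref{cor::polynomial_lasserre} as an immediate specialization of \Cref{thm::polynomial_lasserre} once we know that the Łojasiewicz exponent equals $1$. Concretely, I would invoke \Cref{thm::L_equal_1}: since the Constraint Qualification Condition holds at every point of $\cS(\vb g)$, there exists a constant $\cst \in \RR_{>0}$ such that $D(y) \le \cst\, G(y)$ on $\ubox$, i.e. \Cref{eq::loja} holds with $\textit{\L} = 1$. Hence we may take $\textit{\L} = 1$ in \Cref{def::loja}.

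Next I would feed this choice into \Cref{thm::polynomial_lasserre}: setting $\textit{\L} = 1$ in the bound
\[
    0 \le f^* - f^*_{\sos,\ell} \le \gamma''(n,\vb g)\, \norm{f}\, d(f)^{7/5}\, \ell^{-\frac{1}{2.5n\textit{\L}}}
\]
gives exactly the bound in the statement, with $\gamma''(n,\vb g) = \gamma'(n,\vb g)^{1/(2.5n)}$ depending only on $n$ and $\vb g$ (through the normalization data and the Łojasiewicz constant $\cst$ produced by \Cref{thm::L_equal_1}). No other ingredient is needed, since all hypotheses of \Cref{thm::polynomial_lasserre}, namely $n \ge 2$ and the normalization assumptions \eqref{assum::norm}, are inherited from the statement of \Cref{cor::polynomial_lasserre}.

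There is essentially no obstacle: once \Cref{thm::L_equal_1} is in hand, the corollary is a one-line substitution. The only thing to check for cleanliness is that the constant $\gamma'(n,\vb g)$ appearing inside $\gamma''(n,\vb g)$ is still finite under the CQC hypothesis, which follows because \Cref{thm::L_equal_1} produces an explicit (finite) $\cst$, so that $\gamma(n,\vb g)$ from \Cref{thm::polynomial_putinar} and hence $\gamma'(n,\vb g) = 3^{2.5n}\gamma(n,\vb g)$ are well-defined with $\textit{\L} = 1$. Thus the proof reduces to: apply \Cref{thm::L_equal_1} to set $\textit{\L} = 1$, then apply \Cref{thm::polynomial_lasserre}.
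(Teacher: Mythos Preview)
Your proposal is correct and matches the paper's own proof exactly: the paper simply writes ``Apply \Cref{thm::polynomial_lasserre} and \Cref{thm::L_equal_1}.'' Your additional remarks about the finiteness of $\gamma'(n,\vb g)$ are fine but not needed beyond what the paper already establishes.
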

\begin{proof}
  Apply \Cref{thm::polynomial_lasserre} and \Cref{thm::L_equal_1}.
\end{proof}
\section{Convergence of pseudo-moment sequences to measures}
\label{sec::convergence_of_moments}
We are interested in the study of the truncated positive linear functionals $\cL_\ell(\vb g) = \cQ_\ell(\vb g)^{\vee}$, i.e. the dual convex cone of the truncated quadratic modules, and in particular of its section $\cLone_d(\vb g)$. This cone is used to define the Lasserre MoM relaxations \eqref{def::momrel}. In the following we often restrict the linear functionals to polynomials of degree $\le t$, that is we consider the cones $\cL_\ell(\vb g)^{[t]}$.

Notice in particular that, if $\mu \in \cM(S)^{[t]}$ and $q \in \cQ_\ell(\vb g)\cap \RRg_t$ then $\braket{\mu}{q} = \int q \dd{\mu} \ge 0$, since $q \ge 0$ on $S$. In other words: $\cM(S)^{[t]} \subset \cL_\ell(\vb g)^{[t]}$ for all $\ell$,
i.e. our dual cone is an outer approximation of the cone of measures
supported on $S$. To compare quantitatively these cones we consider
their affine sections $\cMone(S)^{[t]}$ and $\cLone_\ell(\vb
g)^{[t]}$. Recall that $\cLone_\ell(\vb
g)^{[t]}$ is a generating section of $ \cL_\ell(\vb g)^{[t]}$ when $t \le \frac{\ell}{2}$, see \Cref{subsec::truncated_moments}. In this section, we prove
\Cref{thm::moment_rate_convergence_prob}, which shows the convergence
of the outer approximation as $\ell$ goes to infinity, and deduce the speed
rate from \Cref{thm::polynomial_putinar}.
To measure this convergence we use the Hausdorff distance of sets $\hdist{\cdummy}{\cdummy}$.

Before the proof of the main theorem, recall that in the finite
dimensional vector space $\RRg_t$, all the norms are equivalent:
we specify in \Cref{lem::norm_comparison} a constant that we will need in the proof of
\Cref{thm::moment_rate_convergence}, for the following norms.
For $f = \sum_{\abs{\alpha}\le t} a_{\alpha} \vb X^{\alpha} \in \RRg_t$,
as usual  $\displaystyle \norm{f} = \max_{\vb x \in [-1,1]^{n}} \abs{f(\vb x)}$, and
  $\displaystyle \norm{f}_2 = \sqrt{\sum_{\abs{\alpha}\le t} a_{\alpha}^2}$.

\begin{lemma}
  \label{lem::norm_comparison}
 For $f\in \RRg_{t}$, we have $\norm{f} \le \sqrt{\binom{n+t}{t}} \norm{f}_2$.
\end{lemma}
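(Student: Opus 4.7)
The plan is to estimate $|f(\vb x)|$ pointwise on $\ubox$ via the Cauchy--Schwarz inequality applied to the coefficient vector and the vector of evaluated monomials, then take the supremum.

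First, I would fix an arbitrary $\vb x \in \ubox$ and write
\[
|f(\vb x)| = \Bigl|\sum_{|\alpha|\le t} a_\alpha \vb x^{\alpha}\Bigr| \le \Bigl(\sum_{|\alpha|\le t} a_\alpha^{2}\Bigr)^{1/2}\Bigl(\sum_{|\alpha|\le t} (\vb x^{\alpha})^{2}\Bigr)^{1/2}
\]
by Cauchy--Schwarz. The first factor is exactly $\norm{f}_2$, so the task reduces to bounding the second factor uniformly on $\ubox$.

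Next, since $\vb x \in \ubox$ we have $|x_i| \le 1$ for every $i$, hence $|\vb x^{\alpha}| = \prod_i |x_i|^{\alpha_i} \le 1$ for every multi-index $\alpha$. Therefore $\sum_{|\alpha|\le t}(\vb x^{\alpha})^{2} \le \#\{\alpha \in \N^{n} : |\alpha|\le t\} = \binom{n+t}{t}$, the standard count of monomials of degree at most $t$ in $n$ variables.

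Combining the two bounds gives $|f(\vb x)| \le \sqrt{\binom{n+t}{t}}\,\norm{f}_2$ for every $\vb x \in \ubox$, and taking the maximum over $\vb x$ yields the claim. There is no real obstacle here: the only subtlety is the correct counting of monomials of degree $\le t$, which is $\binom{n+t}{t}$ (as opposed to $\binom{n+t-1}{t}$ for monomials of degree exactly $t$); the bound $|\vb x^{\alpha}|\le 1$ on $\ubox$ is what makes $\ubox$ the natural domain for this estimate.
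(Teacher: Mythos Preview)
Your argument is correct and essentially identical to the paper's own proof: both apply Cauchy--Schwarz to the pairing of the coefficient vector with the monomial vector and bound $\sum_{|\alpha|\le t}(\vb x^{\alpha})^{2}$ by $\binom{n+t}{t}$ using $|\vb x^{\alpha}|\le 1$ on $\ubox$. The only cosmetic difference is that the paper picks a maximizing point first while you bound pointwise and then take the supremum.
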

\begin{proof}
  Let $ x \in [-1,1]^{n}$ such that $\abs{f(x)} = \norm{f}$.
  Denote $\bar{\vb x} = ( x^{\alpha})_{\abs{\alpha}\le t}$ and $\bar{a} = (a_{\alpha})_{\abs{\alpha}\le t}$. Then:
  \[
    \norm{f} = \abs{f(x)} = \abs{\bar{a} \cdot \bar{x}} \le \norm{\bar a}_2 \norm{\bar{x}}_2 = \norm{f}_2 \norm{\bar{x}}_2
  \]
  using the Cauchy-Schwarz inequality. Finally notice that $\abs{x^{\alpha}} \le 1$ for all $\alpha$ since $x \in [-1,1]^{n}$, and thus $\norm{\bar{x}}_2 \le \sqrt{\dim \RRg_t} = \sqrt{\binom{n+t}{t}}$, which implies $\norm{f} \le \sqrt{\binom{n+t}{t}} \norm{f}_2$.
\end{proof}

We recall a version of Haviland's theorem that characterize linear functionals that are represented by measures supported on a compact set.
\begin{theorem}[{\cite[th.17.3]{schmudgen_moment_2017}}]
  \label{measures_positive_polys}
  Let $S\subset\RR^n$ be compact and let $\pos(S)_t = \{f \in \RRg \mid \deg f \le t, \ f(x) \ge 0 \ \forall x \in  S \}$ . Then for a linear functional $L \in \RRg_t^*$,  $L \in \cM(S)^{[t]}$ if and only if $\braket{L}{f}\ge 0$ for all $f \in \pos(S)_t$.
\end{theorem}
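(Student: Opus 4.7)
The plan is to prove the two implications separately. The forward direction is immediate: if $L = \mu^{[t]}$ for some $\mu \in \cM(S)$, then for any $f \in \pos(S)_t$ we have $\braket{L}{f} = \int_S f \dd{\mu} \ge 0$, since $f \ge 0$ on $\supp(\mu)\subset S$. The content is the reverse direction, and the strategy is the classical Riesz–Haviland argument: extend $L$ to a positive linear functional on $C(S)$ via a Hahn–Banach argument, and then invoke the Riesz–Markov–Kakutani representation theorem.

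First I would dispose of a degenerate case. Since $S$ is compact, the constant $1$ lies in $\pos(S)_{t}$, so $\braket{L}{1}\ge 0$. If $\braket{L}{1}=0$, I claim $L=0$: for any $h \in \RRg_t$, the bound $M := \max_{x\in S}\abs{h(x)}$ is finite, and both $M\pm h \in \pos(S)_t$, giving $\pm \braket{L}{h} \le M\braket{L}{1} = 0$, hence $\braket{L}{h}=0$, and the zero measure works. So assume $\braket{L}{1}>0$. A similar squeezing argument shows that if $f,g\in \RRg_t$ agree on $S$, then $\braket{L}{f}=\braket{L}{g}$: indeed $f-g$ and $g-f$ are both in $\pos(S)_t$. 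Thus $L$ factors through the restriction map $\RRg_t \to C(S)$, $f\mapsto f|_S$, yielding a well-defined linear functional $\bar L$ on the subspace $V = \{f|_S : f\in \RRg_t\}\subset C(S)$.

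Next I would set up the Hahn–Banach step. Define the sublinear functional $p:C(S)\to\R$ by $p(\varphi) = \max_{x\in S}\varphi(x)$; it is positively homogeneous and subadditive. For $f \in \RRg_t$, the polynomial $p(f|_S) - f$ is nonnegative on $S$, hence belongs to $\pos(S)_t$, so
\[
\braket{L}{f} \le p(f|_S)\braket{L}{1},
\]
and after normalizing so that $\braket{L}{1}=1$ we obtain $\bar L(\varphi)\le p(\varphi)$ for all $\varphi \in V$. By Hahn–Banach there exists a linear extension $\tilde L : C(S) \to \R$ with $\tilde L(\varphi)\le p(\varphi)$ for every $\varphi \in C(S)$. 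This extension is automatically positive: if $\varphi \ge 0$ on $S$ then $p(-\varphi)\le 0$, so $-\tilde L(\varphi) = \tilde L(-\varphi) \le p(-\varphi)\le 0$, i.e.\ $\tilde L(\varphi)\ge 0$.

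Finally, by the Riesz–Markov–Kakutani representation theorem applied to the positive linear functional $\tilde L$ on $C(S)$ with $S$ compact Hausdorff, there is a finite positive Borel measure $\mu$ on $S$ with $\tilde L(\varphi) = \int_S \varphi \dd{\mu}$ for all $\varphi \in C(S)$. Restricting to $\varphi = f|_S$ with $f \in \RRg_t$ gives $\braket{L}{f} = \int f \dd{\mu}$, so $L = \mu^{[t]} \in \cM(S)^{[t]}$ (unnormalizing by the factor $\braket{L}{1}$ if needed). I expect the main subtlety to be the book-keeping around the degenerate case $\braket{L}{1}=0$ and the verification that $L$ descends unambiguously to $V$; once those are handled, the Hahn–Banach plus Riesz representation combination is standard.
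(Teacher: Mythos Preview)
Your argument is correct and is the standard Riesz--Haviland proof of this truncated moment result. However, there is nothing to compare against: the paper does not prove this statement at all. It is quoted verbatim as a known result from \cite[th.~17.3]{schmudgen_moment_2017} and used as a black box in the proof of \Cref{thm::moment_rate_convergence}. So your proposal supplies a proof where the paper simply cites one; the Hahn--Banach extension dominated by $p(\varphi)=\max_{S}\varphi$ followed by Riesz--Markov--Kakutani is exactly the classical route, and your handling of the degenerate case $\braket{L}{1}=0$ and of the well-definedness of $\bar L$ on restrictions is fine.
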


We slightly modify \Cref{measures_positive_polys} in order to consider only polynomials of unit norm.
\begin{corollary}
\label{lem::positive_polys_variant}
  Let $P = \{ f \in \pos(S)_t \mid \norm{f}_2 = 1\}$ and let $L \in \RRg_t^*$. Then $L \in \cM(S)^{[t]}\subset \RRg_t^*$ if and only if $\braket{L}{f}\ge 0$ for all $f \in P$.
\end{corollary}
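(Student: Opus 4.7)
The plan is to deduce this corollary directly from \Cref{measures_positive_polys} by showing the two characterizations (positivity on $P$ versus positivity on all of $\pos(S)_t$) are equivalent, exploiting linearity of $L$ together with the fact that $\norm{\cdummy}_2$ is a genuine norm on the finite-dimensional vector space $\RRg_t$.

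The forward implication is immediate: if $L\in \cM(S)^{[t]}$, then by \Cref{measures_positive_polys} we have $\braket{L}{f}\ge 0$ for every $f\in \pos(S)_t$, and in particular for every $f\in P\subset \pos(S)_t$.

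For the reverse implication, I would take an arbitrary $f\in \pos(S)_t$ and split cases. If $f=0$, then $\braket{L}{f}=0\ge 0$. Otherwise, since $\norm{\cdummy}_2$ is a norm on $\RRg_t$, we have $\norm{f}_2>0$, so the normalized polynomial $\tilde{f} \assign f/\norm{f}_2$ still lies in $\pos(S)_t$ (positivity on $S$ is preserved by scaling by a positive constant) and satisfies $\norm{\tilde f}_2=1$, hence $\tilde f\in P$. By hypothesis $\braket{L}{\tilde f}\ge 0$, and by linearity $\braket{L}{f}=\norm{f}_2\,\braket{L}{\tilde f}\ge 0$. Thus $\braket{L}{f}\ge 0$ for every $f\in \pos(S)_t$, and \Cref{measures_positive_polys} gives $L\in \cM(S)^{[t]}$.

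There is essentially no obstacle here: the result is a purely cosmetic reformulation of Haviland's theorem that restricts the test set to the unit sphere of $\norm{\cdummy}_2$, which is useful later only because it makes it natural to quantify positivity on a compact set of polynomials. The only point to note is that one could equally use the sup-norm $\norm{\cdummy}$ instead of $\norm{\cdummy}_2$; the proof goes through verbatim with any norm on $\RRg_t$, and the choice of $\norm{\cdummy}_2$ is dictated by downstream use (e.g.\ the comparison in \Cref{lem::norm_comparison}).
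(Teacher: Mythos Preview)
Your proof is correct and follows essentially the same approach as the paper: the paper's proof is the one-line observation that $\braket{L}{f}\ge 0 \iff \braket{L}{f/\norm{f}_2}\ge 0$ (for $f\neq 0$) followed by an appeal to \Cref{measures_positive_polys}, which is precisely your normalization argument spelled out in more detail. Your explicit handling of the $f=0$ case is a minor pedantic improvement over the paper's terse version.
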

\begin{proof}
Notice that $\braket{L}{f}\ge 0 \iff \braket{L}{\frac{f}{\norm{f}_{2}}}\ge 0$. Then apply \Cref{measures_positive_polys}.
\end{proof}

We interpret \Cref{lem::positive_polys_variant} in terms of convex
geometry. The convex set
\[
  \cM(S)^{[t]} = \{ \, L \in \RRg_t^* \mid\forall f \in P, \braket{L}{f}\ge 0 \  \}
\]
is the convex cone dual to $P$.
Any $f\in P$ is defining an hyperplane $\braket{L}{f}= 0$ in
$\RRg_t^*$, and an associated halfspace $H_f=\{L \in
\RRg_{t}^{*}\mid \braket{L}{f}\ge 0 \}$ such that
$\cM(S)^{[t]} \subset H_f$.  \Cref{lem::positive_polys_variant} means
that $\cM(S)^{[t]} = \bigcap_{f\in P} H_f$.

We consider a relaxation of the positivity condition to prove our convergence.

\begin{definition}
\label{def::epsilon}
  For $\epsilon \ge 0$ and $P$ as in \Cref{lem::positive_polys_variant}, we define $C(\epsilon) = \{ L \in \RRg_t^* \mid\forall f\in P, \braket{L}{f} \ge -\epsilon\}$.
\end{definition}

Notice that by definition and \Cref{lem::positive_polys_variant} we have $C(0) = \cM(S)^{[t]}$.

We show now that $C(\epsilon)$ contains the truncated positive linear functionals of total mass one for a large enough order of the hierarchy.

\begin{lemma}
\label{lem::inclusion}
  Let $\ell \ge \gamma'(n,\vb g)\,  t^{3.5n\textit{\L}}\, \binom{n+t}{t}^{\frac{5n\textit{\L}}{4}} \epsilon^{-2.5n\textit{\L}}$, where $\vb g$ satisfy assumption \eqref{assum::norm} and $\gamma'(n, \vb g)$ is given by \Cref{eq::f_eta_bound}. Then $\cLone_\ell(\vb g)^{[t]} \subset  C(\epsilon)$.
\end{lemma}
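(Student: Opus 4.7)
The plan is to unpack the definition of $C(\epsilon)$ and verify, for any $L \in \cLone_\ell(\vb g)^{[t]}$ and any $f \in P$, the inequality $\braket{L}{f} \ge -\epsilon$. The bridge between the abstract positivity of $L$ (on a truncated quadratic module) and the concrete positivity $f \ge 0$ on $S$ is the effective Weierstrass-type approximation of \Cref{thm::quad_module_weirstrass}: it places a shifted version of $f$ inside $\cQ_\ell(\vb g)$ at a quantitatively controlled order.

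Concretely, I fix $f \in P$ and first collect three estimates: (i) $f^{*} \ge 0$, since $f \ge 0$ on $S$; (ii) $\deg f \le t$; and (iii) $\norm{f} \le \sqrt{\binom{n+t}{t}}$, thanks to \Cref{lem::norm_comparison} combined with $\norm{f}_2 = 1$. I then set $\epsilon' = \min\{\epsilon, \norm{f}\}$ so that the hypothesis $\epsilon' \le \norm{f}$ of \Cref{thm::quad_module_weirstrass} is met, obtaining
\[
    f - f^{*} + \epsilon' \in \cQ_{\ell'}(\vb g)
    \quad \text{provided} \quad
    \ell' \ge \gamma'(n, \vb g)\, d(f)^{3.5 n \textit{\L}}\, \norm{f}^{2.5 n \textit{\L}}\, (\epsilon')^{-2.5 n \textit{\L}}.
\]
Substituting the uniform bounds $d(f) \le t$, $\norm{f}^{2.5 n \textit{\L}} \le \binom{n+t}{t}^{5 n \textit{\L}/4}$, and $(\epsilon')^{-2.5 n \textit{\L}} \le \epsilon^{-2.5 n \textit{\L}}$ shows that the lower bound on $\ell$ in the statement already forces $\ell \ge \ell'$, so $f - f^{*} + \epsilon' \in \cQ_\ell(\vb g)$.

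To finish, I observe that $f - f^{*} + \epsilon'$ has degree $\le t$, so the pairing with $L \in \cLone_\ell(\vb g)^{[t]}$ is well defined, and positivity of $L$ on $\cQ_\ell(\vb g)$ gives $\braket{L}{f - f^{*} + \epsilon'} \ge 0$. Since $\braket{L}{1} = 1$, this rearranges to $\braket{L}{f} \ge f^{*} - \epsilon' \ge -\epsilon$, using $f^{*} \ge 0$ and $\epsilon' \le \epsilon$; in the alternative case $\epsilon' = \norm{f} < \epsilon$ the same manipulation still yields $\braket{L}{f} \ge -\norm{f} \ge -\epsilon$. The only non-routine aspect is bookkeeping the exponents: the factor $\binom{n+t}{t}^{5 n \textit{\L}/4}$ in the hypothesis is exactly $\norm{f}^{2.5 n \textit{\L}}$ after absorbing the square root from \Cref{lem::norm_comparison}, so no additional slack is wasted and the bound is tight with respect to \Cref{thm::quad_module_weirstrass}.
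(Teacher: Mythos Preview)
Your argument is essentially the paper's: bound $\norm{f}$ via \Cref{lem::norm_comparison}, feed $f$ into \Cref{thm::quad_module_weirstrass}, and pair with $L$. You are in fact more careful than the paper on one point---the paper applies \Cref{thm::quad_module_weirstrass} with $\epsilon$ directly, silently assuming $\epsilon \le \norm{f}$, whereas you introduce $\epsilon' = \min\{\epsilon,\norm{f}\}$ to guarantee the hypothesis.

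There is, however, a small slip in your bookkeeping: the claimed inequality $(\epsilon')^{-2.5n\textit{\L}} \le \epsilon^{-2.5n\textit{\L}}$ goes the \emph{wrong} way when $\epsilon' = \norm{f} < \epsilon$, since then $\epsilon' < \epsilon$ and the negative exponent reverses the inequality. The fix is easy: in that branch the factors $\norm{f}^{2.5n\textit{\L}}$ and $(\epsilon')^{-2.5n\textit{\L}} = \norm{f}^{-2.5n\textit{\L}}$ cancel, so the required order collapses to $\gamma'(n,\vb g)\, d(f)^{3.5n\textit{\L}}$, which is dominated by the hypothesis on $\ell$ as soon as $\binom{n+t}{t}^{5n\textit{\L}/4}\epsilon^{-2.5n\textit{\L}} \ge 1$, i.e.\ $\epsilon \le \binom{n+t}{t}^{1/2}$; this holds in every use of the lemma in the paper (where $\epsilon \le \tfrac{1}{2}$). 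With that correction, your proof is complete and matches the paper's.
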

\begin{proof}
  By \Cref{lem::norm_comparison}, for all $f \in P$ we have
  $\norm{f}\le  \binom{n+t}{t}^{1\over 2}$.
  From \Cref{thm::quad_module_weirstrass}, we deduce that for $\ell \ge \gamma'(n,\vb g)\,  t^{3.5n\textit{\L}}\, \binom{n+t}{t}^{\frac{5n\textit{\L}}{4}} \epsilon^{-2.5n\textit{\L}}$, we have $f-f^*+\epsilon = q \in
  \tqgen{g}{\ell}$. Thus for
  $L \in \cLone_\ell(\vb g)^{[t]}$ we obtain
  $\braket{L}{f+\epsilon} = \braket{L}{q + f^*}\ge
  0$. Therefore $\braket{L}{f} \ge -\epsilon$:
  this shows that $\cLone_\ell(\vb g)^{[t]} \subset  C(\epsilon)$.
\end{proof}

The convex set $C(\epsilon)$ can be seen as a \emph{tubular}
neighborhood of $\cM(S)^{[t]}$. We are going to bound its Hausdorff
distance to the measures. We state and prove the result in the general setting of convex geometry, and finally use it to prove \Cref{thm::moment_rate_convergence}.

\begin{lemma}
\label{lem::convex_bound}
  Let $C = \bigcap_{H \in \mathcal{H}} H$ be a closed convex set
  described as intersection of half spaces $H = \{ \vb x \in \R^N \mid
  c_H \cdot \vb x + b_H \ge 0\}$, where
\begin{itemize}
 \item $\norm{c_H}_{2} = 1$ for all $H \in \mathcal{H}$;
 \item $\mathcal{H}$ is the set of all the half-spaces containing $C$ (of unit normal).
\end{itemize}
If $H(\epsilon) = \{ \vb x \in \R^N \mid c_H \cdot \vb x + b_H \ge -\epsilon\}$ and $C(\epsilon) = \bigcap_{H \in \mathcal{H}} H(\epsilon)$, then $\hdist{C}{C(\epsilon)} \le \epsilon$.
\end{lemma}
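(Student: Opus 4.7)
The plan is to verify the two sides of the Hausdorff distance separately. Since every half-space $H\in\mathcal{H}$ satisfies $H\subset H(\epsilon)$, we get $C=\bigcap_{H\in\mathcal{H}} H\subset \bigcap_{H\in\mathcal{H}} H(\epsilon)=C(\epsilon)$, so $\sup_{c\in C}\dist{c}{C(\epsilon)}=0$ immediately. The substantive content is the reverse inequality $\sup_{x\in C(\epsilon)}\dist{x}{C}\le\epsilon$.

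For this, I would fix an arbitrary $x\in C(\epsilon)$ and let $y\in C$ be the (unique) metric projection of $x$ onto the closed convex set $C$. If $x\in C$ the claim is trivial, so assume $x\notin C$ and set $c=(y-x)/\norm{y-x}_2$, a unit vector. The projection characterization gives $c\cdot(z-y)\ge 0$ for all $z\in C$, equivalently $c\cdot z + b\ge 0$ with $b=-c\cdot y$; hence the half-space $H_{0}=\{\vb z\in\R^{N}\mid c\cdot\vb z+b\ge 0\}$ has unit normal and contains $C$, i.e., $H_{0}\in\mathcal{H}$.

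Now I would evaluate this half-space at $x$: $c\cdot x+b=c\cdot(x-y)=-\norm{y-x}_{2}$. Since $x\in C(\epsilon)\subset H_{0}(\epsilon)$, we must have $-\norm{y-x}_{2}\ge -\epsilon$, so $\norm{x-y}_{2}\le\epsilon$, which gives $\dist{x}{C}\le\epsilon$. Taking the supremum over $x\in C(\epsilon)$ concludes $\hdist{C}{C(\epsilon)}\le\epsilon$.

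The only place that requires care is checking that the supporting hyperplane produced by the projection actually belongs to the prescribed family $\mathcal{H}$; the hypothesis that $\mathcal{H}$ is the collection of \emph{all} closed half-spaces with unit normal containing $C$ makes this immediate, so no obstacle really arises. The argument is essentially the standard duality between a closed convex set and the intersection of its supporting half-spaces, quantified by the slack parameter $\epsilon$.
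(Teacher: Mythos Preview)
Your proof is correct and follows essentially the same approach as the paper: project a point of $C(\epsilon)\setminus C$ onto $C$, use the resulting supporting half-space (which lies in $\mathcal H$ by hypothesis), and read off the distance bound from membership in $H_0(\epsilon)$. The paper's argument is identical up to notation, so there is nothing to add.
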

\begin{proof}
  By definition $C \subset C(\epsilon)$. Assume that this inclusion is
  proper, otherwise there is nothing to prove, and let $\xi \in
  C(\epsilon) \setminus C$. Consider the closest point $\eta$ in $C$ of $\xi$ on $C$, and the half space $H = \{ \vb x\in \R^N \mid \frac{\eta - \xi}{\norm{\eta - \xi}_2} \cdot \vb x + b \ge 0 \} \in \mathcal{H}$
  defined by the affine supporting hyperplane orthogonal to $\eta - \xi$ passing through $\eta$ (and thus $\frac{\eta - \xi}{\norm{\eta - \xi}_2} \cdot \eta = - b$). Notice that $H \in \mathcal{H}$ since $H$ is defined by a normalized supporting hyperplane of $C$.

  Finally notice that $\norm{\eta - \xi}_2 = \frac{(\eta - \xi) \cdot
    (\eta - \xi)}{\norm{\eta - \xi}_2} = - \frac{\eta -
    \xi}{\norm{\eta - \xi}_2} \cdot \xi + \frac{\eta -
    \xi}{\norm{\eta - \xi}_2} \cdot \eta = - (\frac{\eta - \xi}{\norm{\eta - \xi}_2} \cdot \xi + b)$.
  Since $\xi \in C(\epsilon)$ and $H \in \mathcal{H}$, we have
  $(\frac{\eta - \xi}{\norm{\eta - \xi}_2} \cdot \xi + b) \ge
  -\epsilon$, and thus $0 < \norm{\eta - \xi}_2 \le \epsilon$. Then
  the distance between any $\xi \in C(\epsilon) \setminus C$ and its
  closest point $\eta \in C$ is $\le \epsilon$, which implies $\hdist{C}{C(\epsilon)} \le \epsilon$.
\end{proof}

\begin{theorem}
\label{thm::moment_rate_convergence}
  Let $\cQ(\vb g)$ be a quadratic module where $\vb g$ satisfy
  assumption \eqref{assum::norm} and let $$\ell \ge \gamma'(n,\vb g)\,  t^{3.5n\textit{\L}}\, \binom{n+t}{t}^{\frac{5n\textit{\L}}{4}} \epsilon^{-{2.5n\textit{\L}}}$$ with $\gamma'(n, \vb g)$ given by \Cref{eq::f_eta_bound}.
  Then $\hdist{\cM(S)^{[t]}}{\cLone_\ell(\vb g)^{[t]}} \le \epsilon$.
\end{theorem}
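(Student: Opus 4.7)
The plan is to chain together the three preparatory results of this section. By \Cref{lem::positive_polys_variant}, one has the dual description
\[
\cM(S)^{[t]} = \bigcap_{f \in P} H_{f}, \qquad H_{f} = \{L \in \RRg_{t}^{*} \mid \braket{L}{f} \ge 0\},
\]
where $P = \{f \in \pos(S)_{t} \mid \norm{f}_{2}=1\}$. Under the standard identification of $\RRg_{t}^{*}$ with $\RRg_{t}$ via the Euclidean inner product on coefficients, every $H_{f}$ has unit normal, so the algebraic setup of \Cref{lem::convex_bound} is in place. Before invoking that lemma I should check that $\{H_{f}\}_{f\in P}$ really exhausts the unit-normal half-spaces containing $\cM(S)^{[t]}$: since $\cM(S)^{[t]}$ is a closed convex cone through the origin, every supporting half-space passes through $0$ and has the form $\{L \mid c\cdot L \ge 0\}$ with $c$ in the polar cone, which is precisely $\pos(S)_{t}$ by the Haviland-type duality of \Cref{measures_positive_polys}; rescaling $c$ to unit $\ell^{2}$-norm places it in $P$.

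Once the hypotheses of \Cref{lem::convex_bound} are verified, that lemma directly yields $\hdist{\cM(S)^{[t]}}{C(\epsilon)} \le \epsilon$, where $C(\epsilon)$ is the $\epsilon$-relaxation of \Cref{def::epsilon}. The quantitative hypothesis on $\ell$ in the statement is exactly what \Cref{lem::inclusion} demands in order to conclude that $\cLone_{\ell}(\vb g)^{[t]} \subset C(\epsilon)$, and this is the step where \Cref{thm::polynomial_putinar} enters, through its packaging as \Cref{thm::quad_module_weirstrass}. Together with the trivial sandwich $\cMone(S)^{[t]} \subset \cLone_{\ell}(\vb g)^{[t]} \subset C(\epsilon)$ and $\cM(S)^{[t]} = C(0) \subset C(\epsilon)$, both sets live between the nested convex sets $C(0)$ and $C(\epsilon)$, so
\[
\hdist{\cM(S)^{[t]}}{\cLone_{\ell}(\vb g)^{[t]}} \le \hdist{C(0)}{C(\epsilon)} \le \epsilon.
\]

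I expect the main obstacle to be the verification that $\{H_{f}\}_{f\in P}$ exhausts the relevant supporting half-spaces, since \Cref{lem::convex_bound} is delicate about requiring \emph{all} unit-normal supporting half-spaces of $C$; the cone structure of $\cM(S)^{[t]}$ plus the Haviland duality take care of this, but it is the only non-mechanical point. Everything else is a direct assembly of \Cref{lem::positive_polys_variant}, \Cref{lem::inclusion} and \Cref{lem::convex_bound}, with the quantitative content entirely absorbed into the bound on $\ell$ coming from \Cref{thm::quad_module_weirstrass}.
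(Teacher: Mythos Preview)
Your argument tracks the paper's proof essentially line for line: the dual description $\cM(S)^{[t]}=\bigcap_{f\in P}H_f$ from \Cref{lem::positive_polys_variant}, the check that $P$ exhausts the unit-normal supporting half-spaces (the paper does this concretely via Dirac measures $\delta_x$, you invoke the polar cone abstractly; both are fine), then \Cref{lem::convex_bound} for $\hdist{C(0)}{C(\epsilon)}\le\epsilon$, and \Cref{lem::inclusion} for $\cLone_\ell(\vb g)^{[t]}\subset C(\epsilon)$.

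One genuine slip in your final step: the ``sandwich'' you invoke is false. You need $C(0)\subset \cLone_\ell(\vb g)^{[t]}$ for the nesting argument, but $C(0)=\cM(S)^{[t]}$ is the \emph{full cone} of truncated measures, while $\cLone_\ell(\vb g)^{[t]}$ is an affine slice; a measure of total mass $M\neq 1$ is at distance at least $|M-1|$ from $\cLone_\ell(\vb g)^{[t]}$, so in fact $\sup_{\mu\in\cM(S)^{[t]}}\dist{\mu}{\cLone_\ell(\vb g)^{[t]}}=\infty$. The paper's own final line makes the same leap, writing $\hdist{\cM(S)^{[t]}}{\cLone_\ell(\vb g)^{[t]}}\le \hdist{\cM(S)^{[t]}}{C(\epsilon)}$ without justification. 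What is actually established, and what is the only thing used in the proof of \Cref{thm::moment_rate_convergence_prob}, is the one-sided bound $\sup_{L\in\cLone_\ell(\vb g)^{[t]}}\dist{L}{\cM(S)^{[t]}}\le\epsilon$, which follows immediately from $\cLone_\ell(\vb g)^{[t]}\subset C(\epsilon)$ and \Cref{lem::convex_bound}. So your strategy is right and matches the paper; just replace the sandwich claim by this one-sided conclusion.
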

\begin{proof}
By \Cref{lem::positive_polys_variant} we have:
  \[
    \cM(S)^{[t]} = \{ \, L \in \RRg_t^* \mid\ \forall f \in P,
    \braket{L}{f}\ge 0  \}= \cap_{f\in P} H_{f},
  \]
where $H_{f}=\{ L \in \RRg_t^* \mid \braket{L}{f}\ge 0\}$
with $\norm{f}_{2}=1$ and $f \in \pos(S)_{t}$.
We check that the hyperplanes $H_{f}$ with $f\in P$ defining $\cM(S)^{[t]}$ satisfy the hypothesis of \Cref{lem::convex_bound}:
  \begin{itemize}
    \item The half-space $H_{f}$ has a unit normal since $\norm{f}_2 = 1$;
    \item Any supporting hyperplane of $\cM(S)^{[t]}$ defines an
      half-space $H_{f}=\{L\in \RRg_{t}^{*}\mid
      \braket{L}{f} \ge 0$ with $f\in
      P$. Indeed if $f$ defines a supporting hyperplane of $\cM(S)^{[t]}$, then
      $\braket{\mu}{f} = \int f \dd{\mu} \ge 0$ for all $\mu \in
      \cM(S)^{[t]}$. In particular for all $x \in S$ we have $f(x) =
      \int f \dd{\delta_x} \ge 0$
    (where $\delta_x$ denotes the dirac measure concentred at
    $x$). This proves that $f \in \pos(S)_t$ and, normalizing it, we can assume $f \in P$.
  \end{itemize}
  Then from \Cref{lem::convex_bound} we have
  $\hdist{\cM(S)^{[t]}}{C(\epsilon)} \le \epsilon$.

  Finally by \Cref{lem::inclusion} we deduce that  $\cLone_\ell(\vb
  g)^{[t]} \subset C(\epsilon)$ and conclude that
  \[
  \hdist{\cM(S)^{[t]}}{\cLone_\ell(\vb g)^{[t]}}\le \hdist{\cM(S)^{[t]}}{C(\epsilon)}  \le \epsilon.
  \]
\end{proof}

Notice that in \Cref{thm::moment_rate_convergence} we are bounding the
distance between normalized linear functionals and measures that may
be \emph{not} normalized (i.e. not a probability measure). In the
following we solve this problem.

We recall and adapt to our context \cite[lem.~3]{josz_strong_2016} to
obtain a bound on the norm of pseudo-moment sequences. In particular we do not assume that the ball constraint is an explicit inequality, but only that the quadratic module is Archimedean.


\begin{lemma}
\label{lem::moment_norm}
  Assume that $r^2 - \norm{\vb X}_2^2 = q \in \cQ_{\ell_0}(\vb g)$. Then for all $t\in \N$ and $\ell \ge 2t-2+\ell_0$, if $L \in \cLone_{\ell}(\vb g)$ we have $\norm{L^{[2t]}}_2 \le \sqrt{\binom{n+t}{t}}\sum_{k=0}^t r^{2k}$.
\end{lemma}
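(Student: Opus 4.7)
The plan is to bound $\norm{L^{[2t]}}_2$ through the Frobenius norm of the truncated moment matrix $M_t(L) = (L_{\alpha+\beta})_{|\alpha|,|\beta|\le t}$. Since $r^2-\norm{\vb X}_2^2$ has degree $2$ we have $\ell_0 \ge 2$, so the hypothesis gives $2t \le \ell$. This ensures that $L$ is nonnegative on every sum of squares of degree $\le 2t$, and in particular that $M_t(L) \succeq 0$.

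First I would establish by induction on $k=0,\ldots,t$ that $\braket{L}{\norm{\vb X}_2^{2k}} \le r^{2k}$. The base case is just $\braket{L}{1}=1$. For the inductive step, observe that $\norm{\vb X}_2^{2(k-1)}$ is a sum of squares, so multiplying $q=r^2-\norm{\vb X}_2^2 \in \cQ_{\ell_0}(\vb g)$ by it gives $\norm{\vb X}_2^{2(k-1)}\, q \in \cQ_{2(k-1)+\ell_0}(\vb g) \subseteq \cQ_\ell(\vb g)$ whenever $k\le t$, using the hypothesis $\ell \ge 2t-2+\ell_0$. Positivity of $L$ applied to the identity $\norm{\vb X}_2^{2(k-1)} q = r^2 \norm{\vb X}_2^{2(k-1)} - \norm{\vb X}_2^{2k}$ then yields $\braket{L}{\norm{\vb X}_2^{2k}} \le r^2 \braket{L}{\norm{\vb X}_2^{2(k-1)}}$, closing the induction.

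Next, using the multinomial expansion $\norm{\vb X}_2^{2k} = \sum_{|\alpha|=k}\binom{k}{\alpha} X^{2\alpha}$ together with the positivity of $L$ on each square $X^{2\alpha}$ (of degree $2k\le 2t\le \ell$), I would bound the trace of the moment matrix:
\[
  \mathrm{tr}(M_t(L)) = \sum_{|\alpha|\le t} L_{2\alpha} \le \sum_{k=0}^t \sum_{|\alpha|=k}\binom{k}{\alpha} L_{2\alpha} = \sum_{k=0}^t \braket{L}{\norm{\vb X}_2^{2k}} \le \sum_{k=0}^t r^{2k}.
\]

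To conclude, since $M_t(L)\succeq 0$ has $N=\binom{n+t}{t}$ rows, we have $\norm{M_t(L)}_{\mathrm{op}}\le \mathrm{tr}(M_t(L))$ and $\norm{M_t(L)}_F \le \sqrt{N}\,\norm{M_t(L)}_{\mathrm{op}}$; combining these gives $\norm{M_t(L)}_F \le \sqrt{\binom{n+t}{t}} \sum_{k=0}^t r^{2k}$. Any $L_\gamma$ with $|\gamma|\le 2t$ appears as some entry $L_{\alpha+\beta}$ of $M_t(L)$ (a decomposition $\alpha+\beta=\gamma$ with $|\alpha|,|\beta|\le t$ always exists by a greedy split), so
\[
  \norm{L^{[2t]}}_2^2 = \sum_{|\gamma|\le 2t} L_\gamma^2 \le \norm{M_t(L)}_F^2 \le \binom{n+t}{t}\Big(\sum_{k=0}^t r^{2k}\Big)^2,
\]
which yields the claimed bound. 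The only delicate point is the degree bookkeeping: one must check at every application of positivity of $L$ that the underlying element sits inside $\cQ_\ell(\vb g)$, and this reduces uniformly to the hypothesis $\ell\ge 2t-2+\ell_0$ combined with $\ell_0\ge 2$.
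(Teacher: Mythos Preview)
Your proof is correct and follows essentially the same strategy as the paper: bound $\norm{L^{[2t]}}_2$ by the Frobenius norm of the truncated moment matrix, control the Frobenius norm via the trace using positive semidefiniteness, and bound the trace by an inductive argument that multiplies $q=r^2-\norm{\vb X}_2^2\in\cQ_{\ell_0}(\vb g)$ by suitable sums of squares of degree $\le 2(k-1)$. The only cosmetic difference is the inductive quantity: the paper recurses directly on $\trace H_L^k=\sum_{|\alpha|\le k}L_{2\alpha}$ by multiplying $q$ with $\sum_{|\alpha|\le k-1}\vb X^{2\alpha}$, whereas you recurse on $\braket{L}{\norm{\vb X}_2^{2k}}$ by multiplying $q$ with $\norm{\vb X}_2^{2(k-1)}$ and then invoke the multinomial expansion (and $L_{2\alpha}\ge 0$) to pass to the trace; both lead to the same final bound with the same degree bookkeeping.
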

\begin{proof}
  For $L\in \cLone_{\ell}(\vb g)$, let $H_{L}^{k}$ be
  the Moment matrix of $L$ in degree $\le 2k$, which is
  semi-definite positive.
  Let $\norm{H_{L}^k}_F$ be its Frobenius norm, i.e.
  $\norm{H_{L}^k}_F = \sqrt{\sum_{\abs{\alpha}, \abs{\beta} \le
      k} L_{\alpha+\beta}^2}$, and $\norm{H_{L}^k}_2$ its
  $\ell^{2}$ operator norm, i.e. the maximal eigenvalue of $H_{L}^k$.
  Notice that by definition we have $\norm{L^{[2k]}}_2 \le \norm{H_{L}^k}_F$ and $\norm{H_{L}^k}_2 \le \sqrt{\trace H_{L}^k}$, Moreover recall $\norm{H_{L}^k}_F \le \sqrt{\rank (H_{L}^k)} \norm{H_{L}^k}_2$.
  To obtain a bound on $\norm{L^{[2k]}}_2$, we are going to use
  $\trace{H_{L}^k}= \sum_{\abs{\alpha} \le k} L_{2\alpha} =
  \braket{L^{[2k]}}{\sum_{\abs{\alpha} \le k}
    \vb{X}^{2\alpha}}$. As for $k\le t$,
\[
\displaystyle (r^2-\norm{\vb X}_2^2)(\sum_{\abs{\alpha} \le k-1}
\vb{X}^{2\alpha}) \in \cQ_{2t-2+\ell_0}(\vb g) \subset \cQ_\ell(\vb g).
\]
we have
\begin{align*}
    0 &\le \braket{L}{(r^2-\norm{\vb X}_2^2)(\sum_{\abs{\alpha} \le k-1} \vb{X}^{2\alpha})} =
     r^2\braket{L}{\sum_{\abs{\alpha} \le k-1} \vb{X}^{2\alpha}}
    - \braket{L}{\norm{\vb X}_2^2(\sum_{\abs{\alpha} \le k-1} \vb{X}^{2\alpha})} \\
    & = r^2\trace{H_{L}^{k-1}} -
      \big(\braket{L}{\sum_{\abs{\alpha} \le k} \vb{X}^{2\alpha}} -\braket{L}{1}\big) =
    r^2\trace{H_{L}^{k-1}} + 1 - \trace{H_{L}^{k}},
  \end{align*}
that is, $\trace{H_{L}^{k}} \le r^2\trace{H_{L}^{k-1}} +
1$. Since $\trace{H_{L}^0} = L_0 =1$, we deduce by induction
on $k$ that $\trace{H_{L}^{t}} \le \sum_{k=0}^t r^{2k}$ and thus
  \[
    \norm{L^{[2t]}}_2 \le \norm{H_{L}^t}_F \le \sqrt{\rank (H_{L}^t)} \norm{H_{L}^t}_2 \le \sqrt{\binom{n+t}{t}}\trace H_{L}^t\le \sqrt{\binom{n+t}{t}}\sum_{k=0}^t r^{2k}.
  \]
\end{proof}
Finally we are ready to prove \Cref{thm::moment_rate_convergence_prob}, where we obtain the bound of the distance between normalized linear functionals and probability measures.
\begin{proof}[{Proof of \Cref{thm::moment_rate_convergence_prob}}]
  Let $\epsilon'= {1 \over 2}\epsilon t^{-1} \binom{n+t}{t}^{-{1\over
      2}} \le {1\over 4}$,
   $L \in \cLone_\ell(\vb g)^{[2t]}$ and $\mu \in \cM(S)^{[2t]}$ be
   the closest point to $L$.
  We first bound the norm of $\mu$.
As  \[\ell \ge \gamma(n,\vb g)\, 6^{2.5n\textit{\L}}\, t^{6n\textit{\L}}\, \binom{n+t}{t}^{\frac{5n\textit{\L}}{2}}
\epsilon^{-{2.5n\textit{\L}}} = \gamma'(n,\vb g)\,  t^{3.5n\textit{\L}}\, \binom{n+t}{t}^{\frac{5n\textit{\L}}{4}}
(\epsilon')^{-{2.5n\textit{\L}}} ,\] by \Cref{thm::moment_rate_convergence} we have $\dist{L}{\mu} \le \epsilon'$.

  Let $\mu_0 = \int 1 \dd{\mu}$. We want to bound the distance between $L$ and $\frac{\mu}{\mu_0} \in \cMone(S)^{[2t]}$. Notice that
  \begin{equation}
    \label{eq::first_bound}
    \dist{L}{\frac{\mu}{\mu_0}} \le \dist{L}{\mu} +\dist{\mu}{\frac{\mu}{\mu_0}} \le \epsilon' + \abs{\frac{1-\mu_0}{\mu_0}}\norm{\mu}_2.
  \end{equation}

  Since $L_0 = 1$, $\dist{L}{\mu} \le \epsilon'$ implies
  $1-\epsilon'\le \mu_0 \le 1+\epsilon'$, and therefore
  $\abs{\frac{1-\mu_0}{\mu_0}} \le
  \frac{\epsilon'}{1-\epsilon'}$. Moreover, using
  \Cref{lem::moment_norm} we have
  \[
  \norm{\mu}_2 = \norm{\mu - L + L} \le \dist{\mu}{L} + \norm{L}_2 \le \epsilon' + (t+1)\sqrt{\binom{n+t}{t}}.
  \]
  Then from \Cref{eq::first_bound} we conclude that
  \[
    \dist{L}{\frac{\mu}{\mu_0}} \le \epsilon' +
    \frac{\epsilon'}{1-\epsilon'} (\epsilon' +
    (t+1)\sqrt{\binom{n+t}{t}}) = \frac{\epsilon'}{1-\epsilon'} +
    \frac{\epsilon'}{1-\epsilon'} (t+1)\sqrt{\binom{n+t}{t}}
    \le
    2  \epsilon' t \sqrt{\binom{n+t}{t}} = \epsilon,
  \]
  since $\epsilon'\le {1\over 4}$, $n\ge 1$ and $t\ge 1$.
\end{proof}

\begin{corollary}\label{cor::moment_rate_convergence_prob}
With the hypothesis of \Cref{thm::moment_rate_convergence_prob} and the CQC (\Cref{def::CQC}) satisfied at every point of $\cS(\vb g)$, then
 \[
   \hdist{\cMone(S)^{[2t]}}{\cLone_\ell(\vb g)^{[2t]}}\le \epsilon
 \]
 if $\ell \ge \gamma(n,\vb g)\, 6^{2.5n}\, t^{6n}\, \binom{n+t}{t}^{2.5n\textit{\L}}
 \epsilon^{-{2.5n}}$.
\end{corollary}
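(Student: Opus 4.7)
The plan is simply to combine the two results already established: Theorem \ref{thm::moment_rate_convergence_prob} (the general Hausdorff bound with the Łojasiewicz exponent $\textit{\L}$) and Theorem \ref{thm::L_equal_1} (the fact that $\textit{\L}$ can be taken equal to $1$ under the Constraint Qualification Condition). The real content has already been proved upstream, so the proof of the corollary is a one-line substitution.

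Concretely, I would first invoke Theorem \ref{thm::moment_rate_convergence_prob}, which under the normalisation assumptions \eqref{assum::norm} gives
\[
\hdist{\cMone(S)^{[2t]}}{\cLone_\ell(\vb g)^{[2t]}}\le \epsilon
\]
whenever $\ell \ge \gamma(n,\vb g)\, 6^{2.5n\textit{\L}}\, t^{6n\textit{\L}}\, \binom{n+t}{t}^{2.5n\textit{\L}} \epsilon^{-2.5n\textit{\L}}$ and $\ell\ge 2t+\ell_0$. Next, since the CQC is assumed to hold at every point of $\cS(\vb g)$, Theorem \ref{thm::L_equal_1} yields a constant $\cst>0$ such that the pair $(\cst,1)$ realises the Łojasiewicz inequality of Definition \ref{def::loja}, i.e.\ one can take $\textit{\L}=1$. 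Inserting $\textit{\L}=1$ into the sufficient condition above collapses the exponents of $6$, $t$ and $\epsilon^{-1}$ to $2.5n$, $6n$ and $2.5n$ respectively, giving exactly the bound in the statement (the dependence on the Łojasiewicz constant $\cst$ is absorbed into $\gamma(n,\vb g)$, which by the definition in Theorem \ref{thm::polynomial_putinar} already depends only on $n$ and $\vb g$).

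There is no genuine obstacle in this proof: the difficulties were overcome in the proof of Theorem \ref{thm::moment_rate_convergence_prob} (the Hausdorff distance argument via the tubular neighbourhood $C(\epsilon)$ and the normalisation via Lemma \ref{lem::moment_norm}) and in the technical analysis of Section \ref{subsec::delta} culminating in Theorem \ref{thm::L_equal_1}. The only subtlety worth noting is that one must verify that the auxiliary condition $\ell\ge 2t+\ell_0$ from Theorem \ref{thm::moment_rate_convergence_prob} is still implied by the simplified lower bound for all sufficiently large $t$, which is automatic since the bound on $\ell$ grows polynomially in $t$ with positive exponent.
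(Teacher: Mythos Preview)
Your proposal is correct and follows exactly the same approach as the paper, which simply says ``Apply \Cref{thm::moment_rate_convergence_prob} and \Cref{thm::L_equal_1}.'' Your additional remarks about the auxiliary condition $\ell\ge 2t+\ell_0$ and the absorption of $\cst$ into $\gamma(n,\vb g)$ are valid clarifications that the paper leaves implicit.
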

\begin{proof}
    Apply \Cref{thm::moment_rate_convergence_prob} and \Cref{thm::L_equal_1}.
\end{proof}
In \Cref{thm::moment_rate_convergence_prob} we prove a bound for the
convergence of Lasserre truncated pseudo-moments to moments of
measures. The convergence, without bounds, can be deduced from
\cite[th. 3.4]{schweighofer_optimization_2005} by taking as objective
function a constant.  On the other hand, we can deduce
\cite[th.~3.4]{schweighofer_optimization_2005} from
\Cref{thm::moment_rate_convergence_prob}, by considering the sections
of $\cLone_\ell(\vb g)^{[t]}$ given by
$\braket{L}{f} = f^*_{\mom, k}$.

In the context of Generalized Moment Problems (GMP), general
convergence to moments of measures has been studied in
\cite{tacchi_convergence_2021}. The uniform bounded mass assumption in
\cite{tacchi_convergence_2021} is trivially satisfied in the context
of Polynomial Optimization, since $L_0 = \braket{L}{1} = 1$:
the convergence result of \cite{tacchi_convergence_2021} is thus more
general than \cite[th.~3.4]{schweighofer_optimization_2005} and the
one implied by \Cref{thm::moment_rate_convergence_prob}. But we
conjecture, and leave it for future exploration, that it is possible
to extend the proof technique of
\Cref{thm::moment_rate_convergence_prob} to the GMP and give bounds on
the rate of convergence also in this extended context.

\paragraph{Aknowkedgments.}
The authors thank M. Laurent and L. Slot for the discussion about
Schmüdgen's theorem on $\ubox$, A. Parusiński and K. Kurdyka for the useful
suggestions on the Łojasiewicz and Markov inequalities and
F. Kirschner for discussions on half space descriptions of convex bodies. The authors thank the anonymous referees for their suggestion, that helped improving the presentation and pointed out errors present in previous versions of the article.
\printbibliography
\end{document}